\documentclass[9pt]{article} % For LaTeX2e
\usepackage{hyperref}
\usepackage{url}
\usepackage{smile}
\usepackage{graphicx} % more modern
\usepackage{algorithm}
\usepackage{algorithmic}
\usepackage{epstopdf}
\usepackage[margin=1.0in]{geometry}
\usepackage[export]{adjustbox}% http://ctan.org/pkg/adjustbox
\usepackage{mathtools, cuted}
\usepackage{bbm}
\usepackage{wrapfig}
\usepackage{subcaption}
\usepackage{caption}
\usepackage{enumitem}
\usepackage{tabularx}
\usepackage{smile}
\usepackage{tcolorbox}
\usepackage{enumitem}
\usepackage{mathtools, cuted}
\usepackage{caption}
\usepackage{subcaption}

\numberwithin{equation}{section}

\usepackage[nocompress]{cite}

\hypersetup{
    colorlinks=true,
    linkcolor=blue,
    anchorcolor=blue, 
    citecolor=blue,
}

\allowdisplaybreaks

\renewcommand{\frac}[2]{\tfrac{#1}{#2}}

\title{
 Sample Average Approximation for Distributionally Robust Optimization with $\phi$-divergences
    }
\author{  
    Yan Li   \thanks{Department of Industrial and Systems Engineering,
Texas A\&M University. (E-mail: \url{yan.li@tamu.edu}).}
}
\date{\vspace{-6ex}}

\begin{document}
{
\makeatletter
\addtocounter{footnote}{1} % to get dagger instead of star
\renewcommand\thefootnote{\@fnsymbol\c@footnote}%
\makeatother
\maketitle
}

\begin{abstract}
It is well known that estimating the expectation of any given bounded random variable with values in $[-B, B]$ has two basic properties: 
(1) the number of samples needed scales at the order of  $\cO(B^2/\epsilon^2)$, where $\epsilon$ is the prescribed target precision, and (2) such a sample complexity is independent of the underlying probability measure. We show that neither  of these properties can no longer hold when evaluating the worst-case expectation of the random variable, where the probability measures defining the expectation belong to a $\phi$-divergence ball centered at some nominal measure $P$. Specifically, the sample complexity and its dependence on the nominal measure can be completely characterized by the growth of the divergence function. When the divergence function $\phi$ exhibits superlinear growth,  a $P$-independent sample complexity can be obtained for sample average approximation, which depends only on the growth of $\phi$, the radius of the divergence ball, and the target precision. 
We also provide sample complexity lower bounds and demonstrate the optimality of the obtained bounds for commonly used $\phi$-divergences. 
On the other hand, when superlinear growth does not hold for $\phi$, we show that for any estimation method, evaluating the worst-case expectation has a $P$-dependent sample complexity lower bound that can be made arbitrarily large by changing $P$.
In this case, we discuss the approach of adopting the generalized definition of $\phi$-divergence, through which $P$-independent sample complexity can be recovered, and hence demonstrate the essential role of absolute continuity in affecting statistical~tractability.  
\end{abstract}

%!TEX root = ./sample_dro.tex

\section{Introduction}

Let $(\Omega, \cF)$ be a measurable space and $P$ be a probability measure on $(\Omega, \cF)$.
Let $\phi: \RR \to \RR_+ \cup \cbr{+\infty}$ be a convex lower semicontinuous function with $\phi(1)= 0$ and $\phi(x) = +\infty$ for $x < 0$.
For $\tau\geq 0$, consider  
\begin{align*}
    \mathfrak{M}_\tau(P) = \cbr{
    \zeta \in \mathfrak{D}: \int_{\Omega} \phi(\zeta(w)) d P(w) \leq \tau
    },
\end{align*}
where $\mathfrak{D}$ denotes the set of probability density functions with respect to $P$.
Let $X: \Omega \to [-B, B]$ be $\cF$-measurable for some $B \geq 0$.
The distributionally robust functional corresponding to $\phi$ is defined  as
\begin{align}\label{dro_phi}
    \cR(X) = \sup_{\zeta \in \mathfrak{D}} \int_{\Omega} X(\omega) \zeta(\omega) d P(\omega) , ~ \mathrm{s.t.}~ 
    \zeta \in \mathfrak{M}_\tau(P).
\end{align}
Going forward, we refer to $\phi$ as the divergence function associated with $\cR(X)$.

Consider the following sample average approximation (SAA) of \eqref{dro_phi}. 
That is, for any $n>0$, let $\omega_1, \ldots, \omega_n$ denote the samples drawn independently from $P$, and let $P_n$ be the corresponding empirical distribution. 
Define 
\begin{align}\label{dro_phi_empirical}
    \cR_n(X) = \sup_{\zeta \in \mathfrak{D}} \int_{\Omega} X(\omega) \zeta(\omega) d P_n(\omega) , ~ \mathrm{s.t.}~ 
    \zeta \in \mathfrak{M}_\tau(P_n).
\end{align}
We are interested in the number of samples $n$ needed for \eqref{dro_phi_empirical} to be an accurate approximation of \eqref{dro_phi}.

Clearly, when $\tau = 0$, $\cR(X)$ reduces to the expectation, and from Hoeffding's inequality one can take no more than $\cO(B^2/\epsilon^2)$  samples to estimate $\cR(X)$ via $\cR_n(X)$ up to an $\epsilon$-accuracy.
The situation seems to become unclear for $\tau > 0$. 
Sample complexity bounds have been obtained for various concrete $\phi$-divergences (e.g., total variation, Kullback-Leibler, $\chi^2$-distance, conditional value-at-risk) in the context of distributionally robust optimization \cite{duchi2021learning,levy2020large} or Markov decision processes (MDPs) \cite{yang2022toward, xu2023improved, shi2024distributionally,wang2024sample,shi2023curious}.
It is worth mentioning here that in the context of MDPs,  the obtained sample complexity bounds can depend on the nominal measure $P$ via its minimal positive probability \cite{yang2022toward, xu2023improved,shi2024distributionally}, and hence are intrinsically restricted to finite sample spaces. 
On the other hand, for a class of  Cressie-Read divergence functions, \cite{duchi2021learning} obtains the sample complexity lower and upper bounds that are independent of the nominal measure.

\vspace{0.05in}

{\bf Contributions.}
Our problem of interest is to provide a relatively complete characterization of the sample complexity for estimating $\cR(X)$ with general divergence functions.
The essential observation in this manuscript is that the sample complexity and its dependence on the nominal measure can be completely characterized by the growth of the divergence function $\phi$.
In particular, our results can be categorized based on the following dichotomy on the growth of $\phi$. 

\begin{definition}
We say that $\phi$ has sublinear growth if $\lim_{x \to \infty} \phi(x)/x < \infty$,
and superlinear growth if  $\lim_{x \to \infty} \phi(x)/x = \infty$.\footnote{
As will be clarified in Remark \ref{remark_monotone_growth},   we have that $q(x) = \phi(x)/x$ is non-decreasing and hence $\lim_{x \to \infty} \phi(x)/x$ is  well-defined.
}
\end{definition}

First, we establish the information-theoretic lower bound for estimating $\cR(X)$ if the corresponding divergence function $\phi$ grows sublinearly.
Specifically, there exists $p_{\phi, \tau}$ depending only on $\phi$ and the radius $\tau$ of the ambiguity set, such that 
for any $p \in (0, p_{\phi,\tau})$, one can construct a reference measure $P$ under which at least 
\begin{align}\label{intro_lower_sublinear}
\Omega \rbr{
\frac{1}{p}
}
\end{align}
number of samples is needed to ensure a constant estimation error with probability at least $1/2$.
%estimation error $\abs{\cR_n(X) - \cR(X)}$  is bounded away from $0$ with probability at least $1/2$, whenever $n \leq \cO(1/p)$. 
Indeed, we show that such a lower bound holds for any estimator of $\cR(X)$.
Note that the established sample complexity lower bound of $\Omega(1/p)$ holds for $p$ arbitrarily close to $0$,
thereby demonstrating the hardness of estimating the distributionally robust functional in the sublinear-growth setting.

Second, when the divergence function $\phi$ grows superlinearly, we establish a sample complexity lower bound 
of the form 
\begin{align}\label{intro_lower_suplinear}
\Omega \rbr{  {
 \frac{B^2}{\epsilon^2} +  \frac{ g_{\phi}^{-1} (\tau B / \epsilon) B }{  \epsilon } 
}
}
\end{align}
for any estimator of $\cR(X)$. 
Here $g_\phi$ denotes the growth function of $\phi$ (Definition \ref{def_growth}).
To establish the sample complexity upper bound, we utilize the dual form of \eqref{dro_phi} and represent $\cR(X)$ as the optimal value of a stochastic programming problem \cite{shapiro2017distributionally}.
Noting that the corresponding dual objective is non-Lipschitz, we then develop a novel domain restriction technique that approximates the dual problem by one with a Lipschitz objective.
We provide a precise characterization on the Lipschitz properties of the approximate objective based on the domain of the approximate problem, and determine the approximation error based on the growth of the divergence function $\phi$. 
By utilizing the approximate dual problem, we establish a sample complexity of 
\begin{align}\label{intro_upper_suplinear}
\cO \rbr{ \frac{ B \sup_{\epsilon' \in [\epsilon, B]}  g_{\phi}^{-1}(B\tau / \epsilon')
\epsilon' 
}{\epsilon^2}  +  \frac{  B g_{\phi}^{-1} (B\tau / \epsilon)}{\epsilon} }.
\end{align}
It is worth emphasizing that the obtained lower and upper bounds hold for general $\phi$-divergences with superlinear growth. 
In addition, the same analytical approach we take can also be applied to study the sample complexity for $\phi$ with sublinear growth, by adopting the generalized $\phi$-divergence in defining $\mathfrak{M}_\tau(P)$ \cite{csiszar1963informationstheoretische,ali1966general,kuhn2025distributionally}. In this case, $\mathfrak{M}_\tau (P)$ can contain probability measures that are not absolutely continuous with respect to $P$, and the resulting sample complexity becomes independent of the nominal measure $P$ (Remark \ref{remark_sublinear_generalized_phi}).

Third, to demonstrate the broad applicability of the developed sample complexity bounds, we instantiate the aforementioned results to commonly used $\phi$-divergences, and establish the optimality of the obtained rates in these cases.
In particular, we show that for a broad range of $\phi$-divergences, including essential supremum, Burg entropy, Neyman $\chi^2$, total variation, and Hellinger distance, estimating $\cR(X)$ is subject to the lower bound suggested by \eqref{intro_lower_sublinear}. 
On the other hand, for conditional value-at-risk \cite{rockafellar2000optimization}, Kullback-Leibler divergence, Jeffreys divergence, Pearson $\chi^2$, and more broadly the Cressie-Read family of divergences, we show that the lower bound \eqref{intro_lower_suplinear} and upper bound \eqref{intro_upper_suplinear} match, and hence are not improvable. 
We further highlight that all the sample complexity bounds here are obtained in a unified manner, thereby providing a systematic approach for studying distributionally robust optimization, and bypassing the case-by-case analysis carried out in the past literature for some of the aforementioned divergences \cite{duchi2021learning,levy2020large,xu2023improved, shi2024distributionally,wang2024sample}.

{\bf Organization.}
The rest of the manuscript is organized as follows. 
Section \ref{sec_sublinear} presents the sample complexity lower bound for estimating $\cR(X)$ when the divergence function $\phi$ has sublinear growth. 
Section \ref{sec_suplinear} develops sample complexity lower and upper bounds for divergence functions with superlinear growth, and discuss  its implication in obtaining an instance-dependent upper bound for $\phi$ with sublinear growth. 
We then establish sample complexities for concrete $\phi$-divergences commonly used in practice, and demonstrate the optimality of the developed results in Section \ref{sec_sublinear} and \ref{sec_suplinear}. 
Concluding remarks are made in Section \ref{sec_appendix}.

%!TEX root = ./sample_dro.tex

\section{Sample Complexity for Sublinear Growth}\label{sec_sublinear}

Throughout our discussion in this section, we make the following assumption on the divergence function $\phi$. 
\begin{assumption}\label{assump_nontrivial_phi}
There exists $\delta > 0$ such that $[1-\delta, 1+\delta] \subseteq \mathrm{int} (\mathrm{dom} (\phi))$. 
\end{assumption}

\subsection{Sample Complexity Lower Bound}

We begin with the following lower bound on $\cR(X)$ for a family of reference measures $P$ supported over $\Omega$ with two elements.

\begin{lemma}\label{lemma_risk_lb}
Consider $\Omega = \cbr{\omega_1, \omega_2}$, $\cF = 2^\Omega$, $X(\omega_1) = 1$ and $X(\omega_2) = 0$. 
Suppose $\lim_{x \to \infty} {\phi(x)}/{x} < \infty$, then for any $\tau > 0$, there exist $r_{\phi,\tau}, p_{\phi, \tau} > 0$ such that for any $P$ on $(\Omega, \cF)$ with $0 < P(\omega_1) \leq  p_{\phi, \tau} $,  
\begin{align*}
\cR(X)  \geq r_{\phi, \tau}.
\end{align*}
Here $r_{\phi,\tau}$ and $p_{\phi, \tau}$ are constants depending only on the divergence function $\phi$ and radius $\tau$.
\end{lemma}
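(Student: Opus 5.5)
We prove the bound by exhibiting one explicit feasible density $\zeta$ that puts a fixed amount of mass $r_{\phi,\tau}$ on $\omega_1$, however small $p = P(\omega_1)$ is. Write $q = \lim_{x\to\infty}\phi(x)/x < \infty$. By Remark \ref{remark_monotone_growth}, $\phi(x)/x$ is non-decreasing, so $\phi(x) \leq q x$ for all $x \geq 1$. Any density on the two-point space has the form $\zeta(\omega_1) = s/p$ and $\zeta(\omega_2) = (1-s)/(1-p)$ with $s \in [0,1]$, and then $\int X\zeta\, dP = s$. It therefore suffices to find constants $r_{\phi,\tau}, p_{\phi,\tau}$ such that the choice $s = r_{\phi,\tau}$ satisfies
\begin{align*}
p\,\phi\!\left(\tfrac{s}{p}\right) + (1-p)\,\phi\!\left(\tfrac{1-s}{1-p}\right) \leq \tau
\end{align*}
for every $p \in (0, p_{\phi,\tau}]$. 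This yields $\cR(X) \geq s = r_{\phi,\tau}$.

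\emph{Bounding the first term.} Whenever $s/p \geq 1$, the sublinear growth gives $p\,\phi(s/p) \leq p\cdot q\, s/p = qs$. This bound does not depend on $p$, and it is exactly where sublinear growth enters. With $s \leq \tau/(2\max\{q,1\})$, the first term is at most $\tau/2$. We also use $q \geq 0$, which holds because $\phi \geq 0$.

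\emph{Bounding the second term.} The argument $(1-s)/(1-p)$ lies in $[1-s, (1-s)/(1-p)]$. By Assumption \ref{assump_nontrivial_phi}, $\phi$ is finite and continuous on $[1-\delta, 1+\delta]$, with $\phi(1) = 0$. So there is $\eta \in (0,\delta]$ with $\phi(x) \leq \tau/2$ for $|x-1| \leq \eta$. Take $s \leq \eta$ and $p \leq \min\{s, \eta/2\}$. Then $(1-s)/(1-p) \in [1-\eta, 1]$, since $p \leq s$ forces the ratio to be at most $1$. Hence the second term is at most $(1-p)\tau/2 \leq \tau/2$.

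\emph{Choice of constants.} Set
\begin{align*}
r_{\phi,\tau} = \min\{\eta, \tau/(2\max\{q,1\}), 1/2\}, \qquad p_{\phi,\tau} = \min\{r_{\phi,\tau}, \eta/2\}.
\end{align*}
The condition $s/p \geq 1$ then holds automatically for every $p \leq p_{\phi,\tau}$. Both constants depend only on $\phi$ and $\tau$. The one delicate point is making the first-term bound uniform in $p \to 0$, and the sublinear bound $\phi(x) \leq qx$ for $x \geq 1$ handles it. If one prefers not to rely on Remark \ref{remark_monotone_growth}, convexity with $\phi(1) = 0$ directly shows that $(\phi(x) - \phi(1))/(x-1)$ is non-decreasing with limit $q$, which gives $\phi(x) \leq q(x-1) \leq qx$ for $x \geq 1$.
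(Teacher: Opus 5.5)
Your proof is correct and follows the paper's argument: you place mass $s=r_{\phi,\tau}$ on $\omega_1$, control $p\,\phi(s/p)$ uniformly in $p$ through a linear upper bound on $\phi$ coming from sublinear growth, and use Assumption~\ref{assump_nontrivial_phi} to keep $(1-s)/(1-p)$ in a neighborhood of $1$ where $\phi\le\tau/2$. The only difference is minor. You use the monotonicity of $\phi(x)/x$ to get $\phi(x)\le qx$ on all of $[1,\infty)$, so your threshold on $p$ only needs to enforce $p\le s$. The paper instead uses $\phi(x)\le G_\phi x$ only beyond some $L_\phi$, which forces the smaller choice $p_{\phi,\tau}=r_{\phi,\tau}/L_\phi$.
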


\begin{proof}
%Consider $\Omega = \cbr{\omega_1, \omega_2}$. 
Let $p_i = P(\omega_i)$ and $\zeta_i = \zeta(\omega_i)$ for $i \in \cbr{1, 2}$. 
Recall that the constraint $\zeta \in \mathfrak{M}_\tau (P)$ requires 
\begin{align}\label{ambiguity_support_two}
\phi(\zeta_1) p_1 + \phi(\zeta_2) p_2  \leq \tau; ~ \zeta_1 p_1 + \zeta_2 p_2 = 1, ~ \zeta_1, \zeta_2 \geq 0.
\end{align}
Since $\lim_{x \to \infty} \phi(x)/ x < \infty$, 
there exist $L_\phi, G_\phi \geq 1$ such that $\phi(x) / x \leq G_\phi $ for $x \geq L_\phi$. 
In addition, given Assumption \ref{assump_nontrivial_phi}, $\phi(1) = 0$ and the fact that $\phi$ is convex, for any $\tau > 0$, there exists 
$r_{\phi, \tau} > 0$ such that 
$
\sup_{x \in [1-r_{\phi, \tau}, 1+r_{\phi, \tau}] }  \phi(x) \leq \frac{\tau}{2}.
$
Note that without loss of generality we can assume  
$r_{\phi, \tau} \leq  \min\cbr{ {\tau} / {2 G_\phi}, 1}$.

Our goal is to show the existence of $p_{\phi, \tau} > 0$ such that for any probability measure $P$ with $p_1 \in (0,  p_{\phi, \tau}]$, one can construct a corresponding $\zeta \in \mathfrak{M}_\tau(P)$ with  
$\EE_P [\zeta X]$ (and hence $\cR(X)$) bounded away from $0$.

Let $k \geq 1$ be some constant to be determined later. 
Denote $p_{\phi, \tau} =  \frac{\tau}{k G_\phi L_\phi}$, and let $\zeta_1 = \frac{\tau}{k G_\phi p_1}$. 
Note that for any  $p_1 \in (0,  p_{\phi, \tau}]$, we have 
$
\zeta_1 \geq L_\phi, 
$
and hence
$
\phi(\zeta_1) p_1 = \phi \rbr{ \frac{\tau}{k G_\phi p_1} }  \cdot \frac{k G_\phi p_1}{\tau} \cdot \frac{\tau}{k G_\phi} \leq \frac{\tau}{k},
$
where the last inequality follows from the definition of $G_\phi$ and $L_\phi$.
Given this observation and the choice of $\zeta_1$, to satisfy \eqref{ambiguity_support_two}, it suffices to have 
\begin{align*}
\frac{\tau}{k G_\phi} \leq 1,  ~ \sup_{p_2 \in [1- p_{\phi, \tau}, 1]} \phi \rbr{ \frac{1 - {\tau}/\rbr{k G_\phi}}{p_2} }  p_2 \leq \frac{(k-1) \tau}{k}.
\end{align*}
By choosing $k = \frac{\tau}{G_\phi r_{\phi, \tau}}$, given the definition of $r_{\phi, \tau}$, we have $k \geq 2$ and $\frac{\tau}{k G_\phi} = r_{\phi, \tau} \leq 1$. 
%Without loss of generality, one can assume $k \geq 2$, 
 Since $\phi$ is convex, to satisfy the above condition, it suffices to have 
\begin{align*}
%\frac{\tau}{k G_\phi} \leq 1, ~ 
\max \cbr{
\phi \rbr{ \frac{1 - r_{\phi, \tau}}{ 1 - p_{\phi, \tau}}}, \phi \rbr{ 1- r_{\phi, \tau} } 
}
\leq \frac{\tau}{2}.
\end{align*}
Note that the choice of $k$ implies $p_{\phi ,\tau} = \frac{r_{\phi, \tau}}{ L_\phi} \leq r_{\phi, \tau}$, and hence the above condition is immediately satisfied by the definition of $r_{\phi, \tau}$.
In this case, we have 
\begin{align*}
R(X) \geq \EE_P \sbr{\zeta X} = \zeta_1 p_1 = \frac{\tau}{k G_\phi p_1} \cdot p_1 = \frac{\tau}{k G_\phi} = r_{\phi, \tau},
\end{align*}
where the last inequality follows from the choice of $k$.
\end{proof}

With Lemma \ref{lemma_risk_lb}, we proceed to establish the sample complexity lower bound for sample average approximation \eqref{dro_phi_empirical}. 

\begin{theorem}
Under the same setup as in Lemma \ref{lemma_risk_lb}, let $\cR_n(X)$ be defined as in \eqref{dro_phi_empirical}.
For any $p \in (0,  p_{\phi, \tau})$,  there exists $P$ over $(\Omega, \cF)$ such that 
\begin{align*}
\abs{{\cR}_n (X) - \cR(X) } \geq {r_{\phi, \tau}} ,
\end{align*}
with probability at least $1/2$, 
if $n \leq 1/(2 p)$.
\end{theorem}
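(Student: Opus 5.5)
Given $p \in (0, p_{\phi,\tau})$, I would take the two-point space of Lemma \ref{lemma_risk_lb} with $P(\omega_1) = p$ and $P(\omega_2) = 1-p$, keeping $X(\omega_1)=1$ and $X(\omega_2)=0$. Since $0 < p \leq p_{\phi,\tau}$, Lemma \ref{lemma_risk_lb} gives $\cR(X) \geq r_{\phi,\tau}$ immediately. The remaining task is to show that, with probability at least $1/2$, the empirical value $\cR_n(X)$ is $0$, so that $\abs{\cR_n(X) - \cR(X)} \geq r_{\phi,\tau}$.

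\textbf{Step 1: the bad event.} Let $E$ be the event that none of the $n$ i.i.d.\ samples equals $\omega_1$. Then
\begin{align*}
\PP(E) = (1-p)^n \geq 1 - np \geq \tfrac{1}{2}
\end{align*}
whenever $n \leq 1/(2p)$, using Bernoulli's inequality $(1-p)^n \geq 1-np$ for $p\in(0,1)$.

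\textbf{Step 2: $\cR_n(X)=0$ on $E$.} On $E$ the empirical measure is $P_n = \delta_{\omega_2}$. Every density $\zeta \in \mathfrak{D}$ with respect to $P_n$ must satisfy $\int \zeta \, dP_n = \zeta(\omega_2) = 1$. The value of $\zeta(\omega_1)$ plays no role, because $\omega_1$ has $P_n$-measure zero. Hence
\begin{align*}
\int X\zeta \, dP_n = X(\omega_2)\zeta(\omega_2) = 0
\end{align*}
for every feasible $\zeta$. Feasibility is nonempty, since $\zeta\equiv 1$ gives $\phi(1)=0\le\tau$. Therefore $\cR_n(X)=0$. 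Combining this with Step 1 and the lemma gives $\cR(X) - \cR_n(X) \geq r_{\phi,\tau}$ on $E$, which has probability at least $1/2$.

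\textbf{Main subtlety.} The only delicate point is that the optimization in \eqref{dro_phi_empirical} is over densities with respect to $P_n$. Absolute continuity with respect to $P_n$ therefore forbids placing any mass on unseen atoms. This is exactly where sublinear growth matters: it lets $\cR(X)$ put substantial mass $r_{\phi,\tau}$ on a rare atom, while the empirical problem cannot. Since $p$ can be taken arbitrarily small, the resulting lower bound $1/(2p)$ is arbitrarily large.
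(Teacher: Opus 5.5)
Your proposal is correct and follows the paper's proof essentially step for step. You take $P(\omega_1)=p$ and invoke Lemma \ref{lemma_risk_lb}, you observe $\cR_n(X)=0$ whenever $P_n(\omega_1)=0$, and you bound that event's probability by $(1-p)^n \geq 1-np \geq 1/2$. Your explicit check that $P_n=\delta_{\omega_2}$ forces $\zeta(\omega_2)=1$ and keeps the empirical problem feasible is a slightly more detailed version of what the paper asserts directly.
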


\begin{proof}
Let us adopt the same notation as in Lemma \ref{lemma_risk_lb}.
For any $p \in (0,  p_{\phi, \tau})$, define $P$ over $(\Omega, \cF)$ with 
$P(\omega_1) = p \in (0,  p_{\phi, \tau})$.
From Lemma \ref{lemma_risk_lb}, we have $\cR(X) \geq r_{\phi ,\tau}$. 
Given the definition of $X$, it is clear that $\cR_n(X) = 0$ if $P_n(\omega_1) = 0$,
and consequently 
$
\abs{{\cR}_n (X) - \cR(X) } \geq {r_{\phi, \tau}}.
$
Note that 
\begin{align*}
P^{\otimes n} \rbr{ P_n(\omega_1) = 0 } = (1-p)^n \geq 1- p n \geq 1/2,
\end{align*}
where the last inequality follows from $n \leq 1/(2p)$. 
\end{proof}

We can indeed strengthen the lower bound for SAA to the following information-theoretic lower bound for estimating $\cR(X)$. 

\begin{theorem}\label{info_lb_sublinear}
Under the same setup as in Lemma \ref{lemma_risk_lb}, for any $p \in (0,  p_{\phi, \tau})$,  and any estimator $\hat{\cR}_n(X)$ that is $\cF^{\otimes n}$-measurable,
there exists $P$ over $(\Omega, \cF)$ such that 
\begin{align*}
\abs{\hat{\cR}_n (X) - \cR(X) } \geq \frac{r_{\phi, \tau}}{2},
\end{align*}
with probability at least $1/4$, 
if $n \leq 1/(2 p)$.
\end{theorem}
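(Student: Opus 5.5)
This is a two-point (Le Cam-type) argument. I will compare two nominal measures on $\Omega=\cbr{\omega_1,\omega_2}$. Let $P^{(1)}$ have $P^{(1)}(\omega_1)=p\in(0,p_{\phi,\tau})$, and let $P^{(0)}$ be the point mass at $\omega_2$, so $P^{(0)}(\omega_1)=0$.

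First I compute the two target values.
\begin{itemize}
\item Under $P^{(1)}$, Lemma \ref{lemma_risk_lb} gives $\cR(X)\geq r_{\phi,\tau}$.
\item Under $P^{(0)}$, every density $\zeta\in\mathfrak{D}$ is defined $P^{(0)}$-a.e. and integrates $X$ against a measure supported on $\omega_2$. Hence $\cR(X)=\int X\zeta\, dP^{(0)}=0$.
\end{itemize}
The two values of $\cR(X)$ are therefore separated by at least $r_{\phi,\tau}$. If one prefers to keep $P(\omega_1)>0$ in both hypotheses, an alternative is $P^{(0)}(\omega_1)=p'$ with $p'$ tiny. Then I would need an upper bound $\cR(X)\le r_{\phi,\tau}/2$ for small $p'$, which is not available from the lemma. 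So I will use the degenerate measure, which the theorem's ``there exists $P$'' allows.

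Next I couple the two sample distributions. On the event $E=\cbr{\text{no sample equals }\omega_1}$, the samples are $(\omega_2,\ldots,\omega_2)$ under both $P^{(1)}$ and $P^{(0)}$. Since $\hat\cR_n$ is a measurable function of the sample, it takes the same value $v$ on $E$ under both hypotheses. Here $v$ is a deterministic number; if the estimator uses external randomization, it is a random variable with the same law under both. As in the SAA theorem, $(P^{(1)})^{\otimes n}(E)=(1-p)^n\ge 1-pn\ge 1/2$ when $n\le 1/(2p)$, and $(P^{(0)})^{\otimes n}(E)=1$.

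Finally I apply the triangle inequality. Because the two target values differ by at least $r_{\phi,\tau}$, the number $v$ must be at distance at least $r_{\phi,\tau}/2$ from one of them. If $v$ is far from the $P^{(1)}$-value, then under $P^{(1)}$ the error is at least $r_{\phi,\tau}/2$ with probability at least $1/2\ge 1/4$. Otherwise the error under $P^{(0)}$ is at least $r_{\phi,\tau}/2$ with probability $1$. With randomization, I would argue that $\Pr(|v-\cR^{(1)}|\ge r/2)+\Pr(|v-\cR^{(0)}|\ge r/2)\ge 1$, so one of the two probabilities is at least $1/2$. Multiplying by $\Pr(E)\ge 1/2$ then yields the stated $1/4$.

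The only delicate point is to confirm that the degenerate $P^{(0)}$ is admissible and gives $\cR(X)=0$. This is immediate, since $X=0$ $P^{(0)}$-a.s. If the paper requires $P(\omega_1)>0$, I would instead let $P^{(0)}(\omega_1)\to 0$ and show $\cR(X)\to 0$ using $\phi(\zeta_1)p'\le\tau$ and sublinear growth. That version is harder, and I would avoid it.
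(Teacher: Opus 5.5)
Your proposal is correct and is essentially the paper's proof. The paper uses the same pair of measures, $P_0(\omega_1)=0$ and $P_1(\omega_1)=p$, and applies Le Cam's two-point bound $\bigl(1-\mathrm{TV}(P_0^{\otimes n},P_1^{\otimes n})\bigr)/2=(1-p)^n/2\geq 1/4$; your coupling on the event $E$ is that total-variation computation done by hand, and because $P^{(0)}$ is a point mass it even yields $1/2$ instead of $1/4$ for deterministic estimators.
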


\begin{proof}
Let us adopt the same notation as in Lemma \ref{lemma_risk_lb}.
Define a pair of distributions $P_0$ and $P_1$ with
$P_0(\omega_1) = 0$ and $P_1(\omega_1) = p \in (0,  p_{\phi, \tau})$.
To facilitate our discussion, let us write $\cR^P(X)$ to indicate the dependence of $\cR$ on $P$.
Then from Lemma \ref{lemma_risk_lb} and $\cR^{P_0}(X) = 0$, we have 
$
\abs{\cR^{P_1}(X) - \cR^{P_0}(X)} \geq  r_{\phi, \tau}.
$

From Le Cam's method, we have 
\begin{align*}
\inf_{\hat{R}_n (X) } \max_{P \in \cbr{P_0, P_1} } P^{\otimes n} \rbr{
\abs{\hat{R}_n (X) - R^P(X)} \geq \frac{r_{\phi, \tau}}{2}  
}
\geq 
\frac{1 - \mathrm{TV}(P_0^{\otimes n}, P_1^{\otimes n}) }{2}
= \frac{(1-p)^n}{2}
\overset{(a)}{\geq} \frac{1 - n p}{2}
\overset{(b)}{\geq} \frac{1}{4}
,
\end{align*}
where $(a)$ follows from Bernoulli's inequality and $(b)$ follows from $n \leq 1/(2 p)$. 
\end{proof}

%\begin{remark}
It could be interesting to note that the above sample complexity lower bounds suggest that when the divergence function $\phi$ exhibits sublinear growth,  the necessary number of samples for estimating $\cR_n(X)$ depends on the nominal measure $P$ and can be  made arbitrarily large.
That is, for an arbitrary number of samples $n$, there always exists $P$ over $(\Omega, \cF)$ such that the difference between true risk $\cR(X)$ and its estimate $\cR_n(X)$ is bounded below by constant $r_{\phi, \tau}$ with probability at least $1/4$. 
This should be contrasted with the risk-neutral case, where $\EE_P \sbr{X(\omega)}$ can be estimated up to $\epsilon$-accuracy with a sample complexity of $\cO(B^2/\epsilon^2)$ that is independent of  $P$.
%\end{remark}
On the other hand, when the underlying reference measure $P$ is fixed a priori, we will discuss the approach to obtain an instance-dependent (i.e., depending on $P$ and $X$) sample complexity upper bound in Section \ref{sec_suplinear}.

%!TEX root = ./sample_dro.tex

\section{Sample Complexity for Superlinear Growth}\label{sec_suplinear}

In this section, we turn our attention to $\phi$ that exhibits superlinear growth, i.e., $\lim_{x \to \infty} \phi(x) / x = +\infty$.
Let us define the corresponding growth function.

\begin{definition}\label{def_growth}
Suppose $\lim_{x \to \infty} \phi(x) / x = +\infty$.
The growth function associated with divergence $\phi$ is 
\begin{align}\label{def_growth_function}
g_\phi(x) = \begin{cases}  \inf_{x' \geq x} \frac{\phi(x')}{x'}, ~ & x \geq 1; 
\\
\infty, ~ & \text{otherwise}.
\end{cases}
\end{align}
Accordingly, we define its inverse function as 
\begin{align}\label{inverse_g_def}
g_{\phi}^{-1} (y) = \inf \cbr{x: x \geq 1, ~ g_\phi(x) \geq y}, ~ \forall y > 0.
\end{align}
\end{definition}

\begin{remark}\label{remark_monotone_growth}
Let us define the extended-value function $q(x) = \phi(x) / x$ for $x \geq 1$ and $q(x) = \infty$ otherwise. 
Clearly we have $\mathrm{dom}(q) = [1, \infty) \cap \mathrm{dom}(\phi)$.
We claim that $q(\cdot)$ is non-decreasing over $\mathrm{dom}(q)$. 
Indeed, let $x, y \in (1, \infty) \cap \mathrm{dom}(\phi)$ with $x < y$, 
there exists $\alpha \in (0,1)$ such that $x = \alpha y + (1-\alpha)$, and consequently  
\begin{align*}
q(x) = \frac{\phi(x)}{x} = \frac{\phi(\alpha y + (1-\alpha) )}{\alpha y + (1-\alpha)} \overset{(a)}{\leq} \frac{\alpha \phi(y)}{\alpha y + (1-\alpha)} \overset{(b)}{\leq} \frac{\phi(y)}{y} = q(y), 
\end{align*} 
where $(a)$ follows from the convexity of $\phi$ and $\phi(1) = 0$. 
In view of this observation, $\lim_{x \to \infty} q(x)$ is well-defined. 
In addition, since $\phi$ is convex lower semicontinuous, we have that $q(\cdot)$ is continuous in $[1,\infty) \cap \mathrm{dom}(\phi)$. 
Given the above observations, we obtain 
\begin{align}\label{g_phi_simplified}
g_\phi(x) = \begin{cases} \frac{\phi(x)}{x}, ~ & x \geq 1; 
\\
\infty, ~ & \text{otherwise}.
\end{cases}
\end{align}
If  $\phi(x), \phi(y) \in (0, \infty)$, then inequality in $(b)$ becomes strict, from which we conclude that $q(\cdot)$ (resp. $g_\phi$) is strictly increasing over $\cbr{x \in \mathrm{dom}(q): q(x) \in (0,\infty)}$ (resp. $\cbr{x \in \mathrm{dom}(g_\phi): g_\phi(x) \in (0,\infty)}$). 
In particular, we can make the following simple observation that will prove to be convenient in our ensuing discussions. 
\end{remark}

\begin{proposition}\label{prop_inverse_g}
For any $x \geq 0$, we have $g_\phi (g_\phi^{-1}(x)) \leq x$. 
In addition, for any $y > g_\phi^{-1}(x)$, we have $g_\phi(y) > x$. 
\end{proposition}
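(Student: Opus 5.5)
The plan is to work directly from the definition \eqref{inverse_g_def}, using the monotonicity and continuity of $g_\phi = q$ on $[1,\infty)\cap\mathrm{dom}(\phi)$ recorded in Remark \ref{remark_monotone_growth}, together with the simplified form \eqref{g_phi_simplified}. Write $x^* = g_\phi^{-1}(x) = \inf S_x$, where $S_x = \cbr{z \geq 1 : g_\phi(z) \geq x}$. Superlinear growth guarantees $S_x \neq \emptyset$, so $x^* \in [1,\infty)$. For the degenerate input $x = 0$, every $z \geq 1$ belongs to $S_0$. Hence $x^* = 1$ and $g_\phi(1) = \phi(1) = 0 \leq 0$, so both claims hold trivially; for the second claim at $x=0$ we need $g_\phi(y) > 0$ for $y>1$, which I will argue below.

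For the first claim, $g_\phi(x^*) \leq x$ with $x>0$, I will split into two cases. If $x^* = 1$, then $g_\phi(1) = 0 \leq x$. If $x^* > 1$, then every $z \in [1, x^*)$ satisfies $g_\phi(z) < x$, since such $z$ are not in $S_x$. The points $z \in [1,x^*)$ lie in $\mathrm{dom}(\phi)$: the value $g_\phi(z) < x < \infty$ is finite. By continuity of $q$ on $[1,\infty)\cap\mathrm{dom}(\phi)$ and lower semicontinuity of $\phi$, I would then take $z \uparrow x^*$. Since $\phi$ is convex and lsc on an interval, its restriction to the closure of the domain segment is continuous from inside, so $\phi(x^*) = \lim_{z\uparrow x^*}\phi(z)$. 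Therefore $g_\phi(x^*) = \lim_{z \uparrow x^*} g_\phi(z) \leq x$. This limiting step at a possible boundary of $\mathrm{dom}(\phi)$ is the main technical point: one must use that a convex lsc function on $\RR$ is continuous along its domain up to the endpoint, including when that value is finite. If instead $x^*$ were outside the domain, lsc would force $\phi(z)\to\infty$, contradicting $g_\phi(z)<x$.

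For the second claim, take $y > x^*$. By definition of the infimum there is $z \in S_x$ with $x^* \leq z < y$, so $g_\phi(z) \geq x > 0$. If $y \notin \mathrm{dom}(\phi)$, then $g_\phi(y) = \infty > x$. Otherwise $z, y \in \mathrm{dom}(q)$ with $z<y$, and both $q(z),q(y)\in(0,\infty)$, since $q$ is non-decreasing and $q(z)>0$. The strict-monotonicity statement in Remark \ref{remark_monotone_growth} then gives $g_\phi(y) > g_\phi(z) \geq x$. For $x=0$, I handle the second claim separately: for $y>1$ in the domain, $\phi(y)>0$. Otherwise $\phi$ would vanish on $[1,y]$ by convexity, and writing $y$ as a convex combination of $1$ and a point $w>y$ in $\mathrm{dom}(\phi)$ with $\phi(w)>0$, which exists by superlinear growth, yields a strict inequality. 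If no such $w$ is available, the domain is bounded and the same argument applies with $\phi=\infty$ beyond it. This zero case is a minor edge case that I expect to need the most care in phrasing.
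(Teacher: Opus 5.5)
Your argument for $x>0$ is correct and is essentially the paper's own one-line justification (the definitions plus Remark~\ref{remark_monotone_growth}). The first claim comes from letting $z\uparrow x^*$. The second comes from picking $z\in S_x$ with $z<y$ and using strict monotonicity of $q$ where it is positive and finite. For the first claim, lower semicontinuity alone is enough: $\phi(x^*)\le\liminf_{z\uparrow x^*}\phi(z)\le x\,x^*$. So you can drop the discussion of continuity up to the boundary of $\mathrm{dom}(\phi)$.

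The gap is your treatment of $x=0$. You assert that $\phi(y)>0$ for every $y>1$ in $\mathrm{dom}(\phi)$, and this is false for admissible divergences. The CVaR divergence $\phi=\mathbbm{1}_{[0,1/\alpha]}$ with $\alpha<1$, which the paper itself uses, vanishes on $[1,1/\alpha]$. The finite-valued $\phi(x)=([x-2]_+)^2$ for $x\ge 0$ (and $+\infty$ for $x<0$) is convex, lsc, has $\phi(1)=0$ and superlinear growth, yet vanishes on $[1,2]$.

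Your convexity step runs in the wrong direction. Writing $y=\theta\cdot 1+(1-\theta)w$ only gives the upper bound $\phi(y)\le(1-\theta)\phi(w)$, which cannot contradict $\phi(y)=0$. The variant with $\phi(w)=+\infty$ gives nothing at all. In fact, if you extend \eqref{inverse_g_def} to $y=0$ by the same formula, you get $g_\phi^{-1}(0)=1$. The second claim at $x=0$ then fails outright for these examples: for the second one, $3/2>1$ but $g_\phi(3/2)=0$.

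The remedy is not to prove this case. \eqref{inverse_g_def} defines $g_\phi^{-1}$ only on $(0,\infty)$, and the paper applies the proposition only at strictly positive arguments, such as $\tau B/(2\epsilon)$, $4\tau$ and $4B\tau/\epsilon$. State and prove it for $x>0$, and either drop the second claim at $x=0$ or flag it as false.
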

\begin{proof}
The proof follows directly from the definition of $g_\phi$, $g_\phi^{-1}$, and Remark \ref{remark_monotone_growth} above.
\end{proof}

\subsection{Sample Complexity Lower Bound}

Similar to Section \ref{sec_sublinear}, we first present a sample complexity lower bound for estimating $\cR(X)$. 
To this end, we construct the following counterpart to Lemma \ref{lemma_risk_lb} that establishes a lower bound of $\cR(X)$ for a set of nominal measures $P$.

\begin{lemma}\label{lemma_risk_lb_suplinear}
Consider $\Omega = \cbr{\omega_1, \omega_2}$, $\cF = 2^\Omega$, $X(\omega_1) = B$ and $X(\omega_2) = 0$, where $B \geq 1$. 
Suppose $\lim_{x \to \infty} {\phi(x)}/{x} = \infty$.
Then under Assumption \ref{assump_nontrivial_phi}, there exists $\epsilon_{\phi, \tau} > 0$, such that for any $\epsilon \in (0, \epsilon_{\phi, \tau})$, 
there exists $P$ over $(\Omega, \cF)$ with 
\begin{align*}
\cR(X) \geq \epsilon, ~ P(\omega_1) = \frac{\epsilon / B}{g_\phi^{-1}( \tau B / (2\epsilon))}.
\end{align*}
\end{lemma}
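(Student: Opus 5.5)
We mimic the construction of Lemma \ref{lemma_risk_lb}: exhibit an explicit feasible density $\zeta \in \mathfrak{M}_\tau(P)$ that puts a large weight on $\omega_1$. Write $x_\epsilon = g_\phi^{-1}(\tau B/(2\epsilon))$ and set $p_1 = P(\omega_1) = (\epsilon/B)/x_\epsilon$, $p_2 = 1-p_1$. Take $\zeta_1 = x_\epsilon$ and $\zeta_2 = (1 - x_\epsilon p_1)/p_2 = (1-\epsilon/B)/(1-p_1)$. Then $\zeta_1 p_1 + \zeta_2 p_2 = 1$, and $\zeta_1,\zeta_2\ge 0$ as long as $\epsilon \le B$. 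Moreover $\EE_P[\zeta X] = B\zeta_1 p_1 = \epsilon$, so $\cR(X)\ge \epsilon$ once feasibility of the divergence constraint is verified. We also need $x_\epsilon \ge 1$, which holds by definition \eqref{inverse_g_def}, so that $p_1 \le \epsilon/B<1$.

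For the divergence budget, we split $\tau$ into two halves. For the first term, Proposition \ref{prop_inverse_g} gives $g_\phi(x_\epsilon)\le \tau B/(2\epsilon)$. By \eqref{g_phi_simplified}, $\phi(x_\epsilon) = x_\epsilon g_\phi(x_\epsilon)$, so
\[
\phi(\zeta_1)p_1 = g_\phi(x_\epsilon)\, x_\epsilon p_1 \le \frac{\tau B}{2\epsilon}\cdot\frac{\epsilon}{B} = \frac{\tau}{2}.
\]
One subtlety needs checking: $x_\epsilon$ must lie in $\mathrm{dom}(\phi)$, so that $\phi(x_\epsilon)$ is finite. Superlinear growth together with lower semicontinuity ensures that the infimum in \eqref{inverse_g_def} is attained at a point of the domain (or at $1$), and continuity of $q$ from Remark \ref{remark_monotone_growth} handles the boundary case. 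If $\mathrm{dom}(\phi)$ is bounded, then $g_\phi=\infty$ beyond it and the infimum is still a finite domain point by lower semicontinuity.

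For the second term we need $\phi(\zeta_2)p_2 \le \tau/2$, where $\zeta_2 = (1-\epsilon/B)/(1-p_1)$. Since $p_1 \le \epsilon/B$, we get $\zeta_2 \in [1-\epsilon/B,\,1]$, indeed $\zeta_2\le 1$ because $1-\epsilon/B \le 1-p_1$. Using Assumption \ref{assump_nontrivial_phi}, convexity, and $\phi(1)=0$, $\phi$ is continuous near $1$, so there is $r>0$ with $\sup_{x\in[1-r,1]}\phi(x)\le \tau/2$. Since $B\ge 1$, it suffices to choose $\epsilon_{\phi,\tau} = \min\{r, 1\}$: then $\epsilon/B \le r$, so $\zeta_2 \in [1-r,1]$ and $\phi(\zeta_2)p_2 \le \tau/2$. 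Adding the two bounds gives $\int\phi(\zeta)\,dP\le\tau$.

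The main obstacle is the first step, namely justifying $\phi(x_\epsilon)/x_\epsilon \le \tau B/(2\epsilon)$ with $x_\epsilon$ finite and in the domain; this is exactly what Proposition \ref{prop_inverse_g} is for, and the remaining work is bookkeeping to keep $\epsilon_{\phi,\tau}$ depending only on $\phi$ and $\tau$, not on $B$.
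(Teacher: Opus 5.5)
Your proof is correct and follows the paper's argument. It uses the same feasible point $q=\epsilon/B$, i.e.\ $\zeta_1=g_\phi^{-1}(\tau B/(2\epsilon))$, the same split of the budget $\tau$ into two halves, and the same neighborhood of $1$ on which $\phi\le\tau/2$. Your observation that $p_1\le\epsilon/B$ forces $\zeta_2\in[1-\epsilon/B,1]$ is a small streamlining: it removes the paper's auxiliary condition $p(\epsilon)\le 1/2$ and yields the explicit, $B$-independent choice $\epsilon_{\phi,\tau}=\min\{r,1\}$.
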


\begin{proof}
Let us restrict our attention to the nominal measures $P$ that have full support over $\Omega$,
and denote $p = P(\omega_1)$. 
From \eqref{dro_phi}, we have 
\begin{align}\label{two_support_supgrowth}
\cR(X) = \max \cbr{B q  : ~ q \in [0,1], ~  p \phi \rbr{ \frac{q}{p}} + (1-p) \phi \rbr{ \frac{1-q} {1-p} } \leq \tau}.
\end{align}
We proceed to show that there exists $\epsilon_{\phi, \tau} > 0$, such that for any $\epsilon \in (0, \epsilon_{\phi, \tau})$, one can construct the corresponding $P$ for which $q = \epsilon/B$ is feasible to \eqref{two_support_supgrowth}.
Indeed if this is the case, then $\cR(X) \geq Bq = \epsilon$.
To this end, it suffices to show the existence of $p(\epsilon)$ such that 
\begin{align}\label{condition_feasibility_lb_suplinear}
p(\epsilon) \phi \rbr{ \frac{\epsilon / B}{p(\epsilon)} } \leq \frac{\tau}{2}, ~ \phi \rbr{ \frac{1-\epsilon/B}{1-p(\epsilon)}} \leq \frac{\tau}{2}, ~ p(\epsilon) \in (0,1).
\end{align}
Consider taking $p(\epsilon) = \frac{\epsilon / B}{g_\phi^{-1} (\tau B/(2\epsilon))}$ in the above.
Clearly, if $p(\epsilon) \in (0,1)$,  the first and third conditions of \eqref{condition_feasibility_lb_suplinear} are satisfied.  
On the other hand, from Assumption \ref{assump_nontrivial_phi}, $\phi(1) = 0$ and the fact that $\phi$ is convex, for any $\tau > 0$, there exists 
$1 \geq r_{\phi, \tau} > 0$ such that 
$
\sup_{x \in [1-r_{\phi, \tau}, 1+r_{\phi, \tau}] }  \phi(x) \leq \frac{\tau}{2}.
$
Hence to satisfy the second condition in \eqref{condition_feasibility_lb_suplinear} it suffices to have 
$ 1 - r_{\phi, \tau} \leq \frac{1 - \epsilon/ B}{1-p(\epsilon)} \leq 1 + r_{\phi, \tau}$,
which in turn can be satisfied if $\epsilon \leq r_{\phi, \tau}$ and $p(\epsilon) \leq \frac{1}{2}$. 
 Note that $p(\epsilon)$ is non-decreasing in $\epsilon$, consequently there exists $\epsilon_{\phi, \tau} > 0$, such that for $\epsilon \in (0, \epsilon_{\phi, \tau})$, all three conditions in \eqref{condition_feasibility_lb_suplinear} can be satisfied. 
\end{proof}

With Lemma \ref{lemma_risk_lb_suplinear} in place, we can establish the following information-theoretic sample complexity lower bound when estimating $\cR(X)$.

\begin{theorem}\label{thrm_sample_lb_growth_suplinear}
Under the same setup as in Lemma \ref{lemma_risk_lb_suplinear}, for any $\epsilon \in (0,  \epsilon_{\phi, \tau})$,  and any estimator $\hat{\cR}_n(X)$ that is $\cF^{\otimes n}$-measurable,
there exists $P$ over $(\Omega, \cF)$ such that 
\begin{align*}
\abs{\hat{\cR}_n (X) - \cR(X) } \geq \frac{\epsilon}{2},
\end{align*}
with probability at least $1/4$, 
if $n \leq \frac{ g_{\phi}^{-1} (\tau B / (2\epsilon)) B }{  2\epsilon }$.
\end{theorem}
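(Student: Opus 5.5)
We mirror the proof of Theorem \ref{info_lb_sublinear}, a two-point Le Cam argument, with Lemma \ref{lemma_risk_lb_suplinear} supplying the separation. Fix $\epsilon \in (0, \epsilon_{\phi,\tau})$ and define two nominal measures on $(\Omega,\cF)$. The first is $P_0$ with $P_0(\omega_1) = 0$, i.e., the point mass at $\omega_2$. The second is $P_1$ with
\begin{align*}
P_1(\omega_1) = p := \frac{\epsilon/B}{g_\phi^{-1}(\tau B/(2\epsilon))}.
\end{align*}
Lemma \ref{lemma_risk_lb_suplinear} gives $\cR^{P_1}(X) \geq \epsilon$. Under $P_0$, every density $\zeta \in \mathfrak{D}$ is supported $P_0$-a.s. on $\omega_2$, where $X=0$, so $\cR^{P_0}(X) = 0$. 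Hence $\abs{\cR^{P_1}(X) - \cR^{P_0}(X)} \geq \epsilon$.

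Le Cam's method (any estimator within $\epsilon/2$ of one value is at least $\epsilon/2$ from the other) then gives
\begin{align*}
\inf_{\hat{\cR}_n(X)} \max_{P\in\{P_0,P_1\}} P^{\otimes n}\rbr{\abs{\hat{\cR}_n(X) - \cR^P(X)} \geq \frac{\epsilon}{2}} \geq \frac{1 - \mathrm{TV}(P_0^{\otimes n}, P_1^{\otimes n})}{2}.
\end{align*}
To compute the total variation, note that $P_0^{\otimes n}$ is the point mass at $(\omega_2,\dots,\omega_2)$, while $P_1^{\otimes n}$ assigns that point mass $(1-p)^n$. Therefore $\mathrm{TV} = 1-(1-p)^n$, and the right-hand side equals $(1-p)^n/2$. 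Bernoulli's inequality gives $(1-p)^n \geq 1 - np$. The hypothesis $n \leq \frac{g_\phi^{-1}(\tau B/(2\epsilon))B}{2\epsilon}$ is exactly $n \leq 1/(2p)$, so $(1-p)^n/2 \geq 1/4$. Consequently, for every estimator one of $P_0, P_1$ makes the error at least $\epsilon/2$ with probability at least $1/4$, which is the claim.

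The only delicate point is admissibility of the two measures. $P_1$ must satisfy $p \in (0,1)$; this was already secured inside Lemma \ref{lemma_risk_lb_suplinear}, where $\epsilon < \epsilon_{\phi,\tau}$ ensures $p \leq 1/2$. For $P_0$ we need $\cR^{P_0}(X)=0$, which holds whether or not one insists on full support. If full support were required, one could instead take $P_0(\omega_1)$ tiny, bound $\cR^{P_0}(X)$ by something small, and adjust constants. Beyond this, the argument is routine.
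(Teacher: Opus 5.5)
Your proposal is correct and follows the paper's proof essentially step for step. You use the same pair $P_0(\omega_1)=0$ and $P_1(\omega_1)=p$, take the separation $\abs{\cR^{P_1}(X)-\cR^{P_0}(X)}\geq\epsilon$ from Lemma \ref{lemma_risk_lb_suplinear}, apply Le Cam's bound $(1-\mathrm{TV})/2=(1-p)^n/2$, and finish with Bernoulli's inequality under $n\leq 1/(2p)$.
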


\begin{proof}
Let us adopt the same notation as in Lemma \ref{lemma_risk_lb_suplinear}.
Define a pair of distributions $P_0$ and $P_1$ with
$P_0(\omega_1) = 0$ and $P_1(\omega_1) = \frac{\epsilon / B}{g_\phi^{-1}( \tau B / (2\epsilon))}$.
Let us write $\cR^P(X)$ to indicate the dependence of $\cR$ on $P$.
Then from Lemma \ref{lemma_risk_lb_suplinear} and $\cR^{P_0}(X) = 0$, we have 
$
\abs{\cR^{P_1}(X) - \cR^{P_0}(X)} \geq  \epsilon.
$
From Le Cam's method, we have 
\begin{align*}
\inf_{\hat{R}_n (X) } \max_{P \in \cbr{P_0, P_1} } P^{\otimes n} \rbr{
\abs{\hat{R}_n (X) - R^P(X)} \geq \frac{\epsilon}{2}  
}
\geq 
\frac{1 - \mathrm{TV}(P_0^{\otimes n}, P_1^{\otimes n}) }{2}
= \frac{(1- P_1(\omega_1))^n}{2}
\overset{(a)}{\geq} \frac{1 - n P_1(\omega_1)}{2}
\overset{(b)}{\geq} \frac{1}{4}
,
\end{align*}
where $(a)$ follows from Bernoulli's inequality and $(b)$ follows from 
 $n \leq \frac{ g_{\phi}^{-1} (\tau B / (2\epsilon)) B }{  2\epsilon }$.
\end{proof}

We proceed to show that the standard $\Omega(B^2/\epsilon^2)$ sample complexity bound in the risk-neutral case also carries over to estimating $\cR(X)$.

\begin{theorem}\label{thrm_sample_quadratic_lb_suplinear}
Under the same setup as in Lemma \ref{lemma_risk_lb_suplinear}, there exists $\overline{p}_{\phi,\tau} > 0$ that only depends on $\phi$ and $\tau$, such that 
for any estimator $\hat{\cR}_n(X)$ that is $\cF^{\otimes n}$-measurable,
there exists $P$ over $(\Omega, \cF)$ for which 
 \begin{align*}
\abs{\hat{\cR}_n (X) - \cR(X) } \geq \frac{\epsilon}{8},
\end{align*}
with probability at least $1/4$, 
for any $n \leq \frac{{\overline{p}_{\phi,\tau} } B^2}{8 \epsilon^2}$ and $\epsilon \in (0, \frac{B}{4})$. 
\end{theorem}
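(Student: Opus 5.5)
We use a two-point Le Cam argument, as in Theorem \ref{thrm_sample_lb_growth_suplinear}. This time the two nominal measures are both non-degenerate and close to each other: $P_0(\omega_1)=p_0$ and $P_1(\omega_1)=p_0+\Delta$, with $p_0=\overline{p}_{\phi,\tau}$ a constant depending only on $\phi,\tau$ and $\Delta\asymp \epsilon/B$. The work lies in choosing $\overline{p}_{\phi,\tau}$ so that two things hold. First, $\cR^{P_1}(X)-\cR^{P_0}(X)\ge \epsilon/4$. Second, $\mathrm{TV}(P_0^{\otimes n},P_1^{\otimes n})\le 1/2$ whenever $n\le \overline{p}_{\phi,\tau}B^2/(8\epsilon^2)$. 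Le Cam's method then gives $\inf_{\hat\cR_n}\max_{P\in\{P_0,P_1\}}P^{\otimes n}(|\hat\cR_n-\cR^P|\ge \epsilon/8)\ge (1-\mathrm{TV})/2\ge 1/4$.

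\textbf{Step 1: a separation in $\cR$.} From the representation \eqref{two_support_supgrowth}, $\cR^{P}(X)=B\,q^*(p)$, where
\[
q^*(p)=\max\{q\in[0,1]: p\phi(q/p)+(1-p)\phi((1-q)/(1-p))\le\tau\}.
\]
The constraint set always contains $q=p$, so $q^*(p)\ge p$. We need a monotonicity statement of the form $q^*(p+\Delta)-q^*(p)\ge \Delta/4$, or at least $\ge c\,\Delta$ for some $c$ depending on $\phi,\tau$; the constant can be absorbed into $\Delta$.

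The simplest route is to take $\Delta=\epsilon/(4B)\cdot$const and compare feasible points. If $q$ is feasible for $p$, we want $q+\Delta$ to be feasible for $p+\Delta$. Writing the constraint as $D_\phi(\mathrm{Ber}(q)\|\mathrm{Ber}(p))\le\tau$, one would like this divergence not to increase when both arguments are shifted by $\Delta$. That is not automatic.

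An alternative is to choose $p_0$ small and $\tau$-dependent so that the optimum lies in the "linear" regime. Concretely, the map $(p,q)\mapsto p\phi(q/p)+(1-p)\phi(\cdot)$ is jointly convex as a perspective, so $q^*(\cdot)$ is concave on $(0,1)$. It is also nondecreasing near $0$: $q^*(0)=0$ by superlinearity, and $q^*(p)\ge p$. Concavity then yields a slope lower bound on $[0,2p_0]$: for $p\le 2p_0$ with $q^*(1)$-type bounds, $q^*(p+\Delta)-q^*(p)\ge \Delta\cdot (q^*(2p_0)-q^*(p_0))/p_0$. We therefore pick $p_0$ small enough, depending only on $\phi,\tau$ and using Assumption \ref{assump_nontrivial_phi}, that $q^*$ is strictly increasing with slope at least $1/2$ on $[p_0,2p_0]$. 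This is plausible because $q^*(p)\ge p$ together with $q^*(0)=0$ and concavity forces slope $\ge 1$ near $0$ up to the point where $q^*$ saturates at $1$. I expect this step to be the main obstacle: it requires care to ensure that $q^*$ has not saturated and that concavity delivers a uniform slope.

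\textbf{Step 2: an indistinguishability bound.} Use $\mathrm{TV}\le\sqrt{\tfrac12\mathrm{KL}}$ together with $\mathrm{KL}(P_1^{\otimes n}\|P_0^{\otimes n})=n\,\mathrm{KL}(\mathrm{Ber}(p_0+\Delta)\|\mathrm{Ber}(p_0))\le n\Delta^2/(p_0(1-p_0-\Delta))$. With $\Delta\approx \epsilon/B$ and $\epsilon<B/4$, this is at most a constant times $n\epsilon^2/(B^2\overline{p}_{\phi,\tau})$. The hypothesis $n\le \overline{p}_{\phi,\tau}B^2/(8\epsilon^2)$ therefore gives $\mathrm{TV}\le1/2$, after adjusting the constants inside the definition of $\overline{p}_{\phi,\tau}$; for instance, take it to be a fixed fraction of $p_0$. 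The condition $\epsilon<B/4$ guarantees $\Delta\le p_0$ and keeps $p_0+\Delta<1$.

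Combining Steps 1 and 2 with Le Cam's method completes the argument.
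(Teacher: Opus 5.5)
Your overall architecture is the paper's: two nearby non-degenerate Bernoulli nominals at a constant $p_0=\overline{p}_{\phi,\tau}$, a gap $\Delta\asymp\epsilon/B$, then Le Cam with Pinsker and $\mathrm{KL}\le\chi^2$. Step 2 is fine. The gap is in Step 1, at the point you flagged, and the justification you give there is not valid.

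Concavity of $q^*$, $q^*(0)=0$ and $q^*(p)\ge p$ only constrain chords issuing from $0$. By the three-chord inequality such chords dominate later slopes; they do not bound them from below. All three facts hold for the total-variation divergence $\phi(x)=|x-1|$ with $\tau\ge 2$, where $q^*\equiv 1$ on $(0,1]$ and the slope on $[p_0,2p_0]$ is zero. Also, $q^*(0)=0$ holds for every $\phi$, simply because $\zeta$ is a density with respect to $P$, so superlinearity does not enter there. A lower bound on the slope has to come from a chord to the \emph{right}. The natural one ends at $(1,q^*(1))=(1,1)$, which your parenthetical about $q^*(1)$-type bounds gestures at. Concavity gives $q^*(p_0+\Delta)-q^*(p_0)\ge \Delta\,(1-q^*(p_0))/(1-p_0)$ for every $\Delta\in(0,1-p_0)$. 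This is exactly the paper's construction. With $p'=(1-\eta)p+\eta$ and $q'=(1-\eta)q+\eta$, the point $(p',q')$ is feasible by convexity of $\phi$ and $\phi(1)=0$, noting that $(1-q')/(1-p')=(1-q)/(1-p)$. In other words, the chord from $(p,q)$ to $(1,1)$ stays in the constraint set.

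For this to give slope at least $1/2$ you need $q^*(p_0)\le 1/2$, i.e.\ $\lim_{p\downarrow 0}q^*(p)=0$. This right-continuity at $0$ is the one place superlinear growth is genuinely used, and your proposal never establishes it. It follows from the bound $q=p\,\phi(q/p)\cdot\frac{q/p}{\phi(q/p)}\le \tau/g_\phi(q/p)$, valid for feasible $q\ge p$. If $q^*(p)\ge l>0$ along $p\to 0$, then $q/p\ge l/p\to\infty$ and the right-hand side tends to $0$, a contradiction. Choose $\overline{p}_{\phi,\tau}\le 1/2$ with $q^*(\overline{p}_{\phi,\tau})\le 1/2$ and take $\Delta=\epsilon/B$; this yields the separation $\cR^{P_1}(X)-\cR^{P_0}(X)\ge \epsilon/2$ directly.

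Using the chord to $1$ also repairs a second slip. Your claim that $\epsilon<B/4$ guarantees $\Delta\le p_0$ is false once $p_0$ is a small constant. For $\epsilon/B$ slightly above $\overline{p}_{\phi,\tau}$, the hypothesis $n\le \overline{p}_{\phi,\tau}B^2/(8\epsilon^2)$ still allows $n$ of order $1/\overline{p}_{\phi,\tau}$, so your chord $[p_0,2p_0]$ is not always available. The bound via $(1,1)$ needs no relation between $\Delta$ and $p_0$.
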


\begin{proof}
Let us adopt the following notation.
For any distribution 
$P$ over $(\Omega, \cF)$, we write $p = P(\omega_1)$.
Conversely for any $p \in [0,1]$, we write $P$ to indicate the distribution over $(\Omega,\cF)$ with $P(\omega_1) = p$. 
%Denote $Q$ (resp. $Q'$) as the worst-case distribution for $P$ (resp. $P'$) in \eqref{dro_phi},
%and write $q = Q(\omega_1), q' = Q' (\omega_1)$.
Let us also write $\cR(p)$ in short for $\cR^P(X)$.
We first note that 
$
\lim_{p \to 0} \cR(p) = 0.
$
Indeed, suppose this is not the case, then there exists $\cbr{p_n}$ and $l > 0$ such that $\cR(p_n) \geq l B$.
Let $Q_n$ be the corresponding $({l}/{2})$-worst-case distribution for $P_n$. Then we have $q_n = Q_n(\omega_1) \geq l/2$, yet 
\begin{align*}
\lim_{n \to \infty} q_n = \lim_{n \to 0} p_n  \frac{\phi\rbr{{q_n}/{p_n}} }{\phi\rbr{{q_n}/{p_n}}  p_n / q_n}
\overset{(a)}{\leq} \tau \frac{ q_n /p_n}{ \phi\rbr{{q_n}/{p_n}}} \overset{(b)}{=} 0, 
\end{align*}
where $(a)$ follows from feasibility of $Q_n$ and $(b)$ follows from $\lim_{x \to \infty} \phi(x) / x = \infty$. Hence we obtain a contradiction. 
Consequently, there exists $0< \overline{p}_{\phi,\tau} \leq \frac{1}{2} $ such that 
\begin{align}\label{risk_ub_uniform}
\cR(p) \leq \frac{B}{2}, ~ \forall p \leq \overline{p}_{\phi,\tau}.
\end{align} 
For any $\epsilon \leq \frac{1}{4}$, let $P, P'$ be a pair of distributions with $p' = (1-\epsilon) p + \epsilon \geq p$, where $p \leq \overline{p}_{\phi,\tau}$. 
Clearly we have 
$\abs{p' - p} = p' -  p = \epsilon (1-p) \leq \epsilon$. We proceed to lower bound the difference $\abs{\cR(p) - \cR(p') }$. 
Let $q = Q(\omega_1)$, where $Q$ is the $(\epsilon B/4)$-worst-case distribution for $P$. 
Define $q' = (1-\epsilon) q + \epsilon$, then
\begin{align*}
p' \phi \rbr{\frac{q'}{p'}} + (1-p') \phi \rbr{\frac{{1- \overline q}}{1- p'}} 
& =
p' \phi \rbr{\frac{ (1-\epsilon) q + \epsilon}{(1-\epsilon) p + \epsilon}} + (1-p') \phi \rbr{\frac{ 1-q}{1-p}} \\
& = 
p'\sbr{ \phi \rbr{\frac{ (1-\epsilon) p}{(1-\epsilon) p + \epsilon} \cdot \frac{q}{p}  +  \frac{ \epsilon}{(1-\epsilon) p + \epsilon}   }  } + (1-p') \phi \rbr{\frac{ 1-q}{1-p}} \\
& \overset{(c)}{\leq} p' \cdot \frac{ (1-\epsilon) p}{(1-\epsilon) p + \epsilon} \phi \rbr{\frac{q}{p}} + (1-p') \phi \rbr{\frac{ 1-q}{1-p}} \\
& \leq p \phi \rbr{\frac{q}{p}} + (1-p) \phi \rbr{\frac{1-q}{1-p}} \leq \tau,
\end{align*}
where $(c)$ follows from $\phi$ being convex and $\phi(1) = 0$. 
The above observation then immediately implies  
$
\cR(p') \geq q'  B
$ 
and subsequently 
$
\cR(p') - \cR(p) \geq (q' - q - \frac{\epsilon}{4} ) B=[ \epsilon ( 1- q)  - \frac{\epsilon}{4} ] B \geq \frac{\epsilon B}{4},
$ 
where the first inequality follows from the definition of $q$, and the second inequality follows from \eqref{risk_ub_uniform} and that $p \leq \overline{p}_{\phi,\tau}$.
Hence by invoking Le Cam's method,
we obtain 
\begin{align*}
& \inf_{\hat{R}_n (X) } \max_{\tilde{P} \in \cbr{P, P'} } \tilde{P}^{\otimes n} \rbr{
\abs{\hat{R}_n (X) - R^{\tilde P}(X)} \geq \frac{\epsilon B}{8}  
} \\ 
 \geq & 
\frac{1 - \mathrm{TV}(P^{\otimes n}, (P')^{\otimes n}) }{2} 
 \overset{(d)}{\geq} 
\frac{1 - \sqrt{\mathrm{KL} (P^{\otimes n} \Vert (P')^{\otimes n} )/2 } }{2} 
=\frac{1 - \sqrt{n \mathrm{KL} (P \Vert P')  / 2} }{2}  
 \overset{(e)}{\geq} 
\frac{1 - \sqrt{n \mathrm{\chi}^2 (P, P') /2 } }{2}  
\overset{(f)}{\geq}  \frac{1 - \sqrt{2 n \epsilon^2/ p}}{2},
\end{align*}
where $(d)$ follows from Pinsker's inequality,  $(e)$ follows from the fact that 
$\mathrm{KL}(P,  P') \leq \chi^2(P, P')$,
and $(f)$ follows from 
$ \chi^2(P, P') = \frac{(p - p')^2}{p'(1-p')} \leq \frac{4\epsilon^2}{p}$ given that $p \leq p' = (1-\epsilon) p + \epsilon \leq \frac{3}{4}$. 
The desired claim then follows from taking $p = \overline{p}_{\phi,\tau}$. 
%from which we conclude the proof.
\end{proof}

In view of Theorem \ref{thrm_sample_lb_growth_suplinear} and \ref{thrm_sample_quadratic_lb_suplinear}, the necessary number of samples  for obtaining an $\epsilon$-accurate estimate of $\cR(X)$ is lower bounded by 
\begin{align}\label{eq_sample_lb_suplinear_max}
\Omega \rbr{  {
 \frac{B^2}{\epsilon^2} +  \frac{ g_{\phi}^{-1} (\tau B / (2\epsilon)) B }{  \epsilon } 
}
}.
\end{align}
We will later revisit this lower bound when we discuss the sample complexity  for commonly used $\phi$-divergences in Section \ref{sec_application}.

\subsection{Sample Complexity Upper Bound}\label{subsec_sample_ub}
We proceed to establish the sample complexity upper bound for estimating $\cR(X)$ that is independent of the nominal measure $P$.
Let us first recall the following representation for the dual problem to \eqref{dro_phi}.

\begin{lemma}[\cite{shapiro2017distributionally}]\label{lemma_dual_dro_phi}
We have 
\begin{align}\label{eq_phi_dro_f_form}
    \cR(X) = \inf_{\lambda > 0, \mu \in \RR}
    \cbr{f(\lambda, \mu) \coloneqq   
    \EE_P \sbr{ \lambda \tau + \mu + (\lambda \phi)^* (X - \mu) } }.
\end{align}
In addition, there exist $\lambda^* \geq 0, \mu^* \in \RR$ attaining the infimum above.
\end{lemma}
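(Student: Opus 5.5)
We prove the duality by Lagrangian relaxation, using the interchange principle for decomposable spaces. The plan has three steps: rewrite \eqref{dro_phi} as a convex program in $\zeta$, dualize the two constraints, and then swap the supremum and the expectation.

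\textbf{Step 1 (reformulation and Lagrangian).} Since $\phi(x)=+\infty$ for $x<0$, any $\zeta$ with $\EE_P[\phi(\zeta)]\le\tau$ is automatically nonnegative. Hence \eqref{dro_phi} is the convex program
\[
\sup_{\zeta\in L^1(P)} \EE_P[X\zeta] \quad \text{s.t.}\quad \EE_P[\phi(\zeta)]\le\tau,\ \EE_P[\zeta]=1.
\]
We attach a multiplier $\lambda\ge 0$ to the divergence constraint and $\mu\in\RR$ to the normalization constraint. Weak duality then gives, for every $\lambda>0$ and $\mu\in\RR$,
\[
\cR(X)\le \lambda\tau+\mu+\sup_{\zeta}\EE_P\bigl[(X-\mu)\zeta-\lambda\phi(\zeta)\bigr].
\]

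\textbf{Step 2 (interchange).} The supremum in Step 1 is over a decomposable space, and the integrand is a normal integrand. By the Rockafellar interchange theorem it equals $\EE_P[(\lambda\phi)^*(X-\mu)]$, where the supremum is taken pointwise. This produces exactly $f(\lambda,\mu)$, so $\cR(X)\le\inf f$.

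\textbf{Step 3 (strong duality and attainment).} For the reverse inequality we need a Slater point. The density $\zeta\equiv 1$ satisfies $\EE_P[\phi(1)]=0<\tau$. Assumption \ref{assump_nontrivial_phi} puts $1$ in the interior of $\mathrm{dom}\,\phi$, which also makes the equality constraint regular, since $\zeta$ can be perturbed by bounded functions while staying feasible. Because $X$ is bounded, the objective is finite on bounded perturbations, and the value satisfies $\cR(X)\le B<\infty$. Standard conjugate duality for convex programs in $L^\infty$/$L^1$ pairings then gives zero duality gap and a nonempty, bounded set of optimal multipliers $(\lambda^*,\mu^*)$.

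The boundary case $\lambda^*=0$ needs separate treatment. There we read $(0\cdot\phi)^*$ as the indicator of the recession function, equivalently we take the lower semicontinuous closure of $f$ as $\lambda\downarrow 0$. This is consistent with the statement, which allows $\lambda^*\ge 0$.

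The main obstacle is Step 3, and specifically the regularity needed for strong duality in infinite dimensions. My first attempt would be the perturbation-function approach: define $v(a,b)$ as the optimal value when the constraints are perturbed to $\EE[\phi(\zeta)]\le \tau+a$ and $\EE[\zeta]=1+b$. I would show $v$ is concave and finite near $(0,0)$, which follows from the interior-domain assumption and boundedness of $X$. A concave function finite near a point is continuous there, so it has a supergradient at $(0,0)$, and that supergradient gives the optimal $(\lambda^*,\mu^*)$. If this fails, I would simply cite \cite{shapiro2017distributionally}, as the statement does.
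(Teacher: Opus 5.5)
Your proposal is correct and follows the same route as the cited source \cite{shapiro2017distributionally}; the paper's own proof of Lemma \ref{lemma_rep_f_via_zeta} uses the same steps: dualize the divergence and normalization constraints, interchange supremum and expectation via decomposability of $L^1(P)$, and obtain zero duality gap with a nonempty bounded multiplier set from the Slater point $\zeta \equiv 1$ together with $1 \in \mathrm{int}(\mathrm{dom}\,\phi)$ (your perturbation-function argument makes this rigorous cleanly, since the perturbation space is only $\RR^2$). One small correction for the case $\lambda^* = 0$: the limit of $(\lambda\phi)^*$ as $\lambda \downarrow 0$ is the recession function of $\phi^*$, i.e., the support function $\sigma_{\mathrm{dom}\,\phi}$ of $\mathrm{dom}\,\phi$, which is an indicator only when $\mathrm{dom}\,\phi$ is a cone (for CVaR, for example, it equals $\max\{0, y/\alpha\}$).
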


\begin{remark}\label{remark_truncation}
As suggested by Lemma \ref{lemma_dual_dro_phi}, the risk $\cR(X)$ that we aim to estimate can be represented as the optimal value of a stochastic program \eqref{eq_phi_dro_f_form}.
Consequently a natural idea for estimation is to simply consider applying the standard SAA result  \cite{shapiro2021lectures} to \eqref{eq_phi_dro_f_form}.
Unfortunately, in its vanilla form, \eqref{eq_phi_dro_f_form} oftentimes does not satisfy the basic properties required by standard SAA results.
First, the domain of the stochastic program in \eqref{eq_phi_dro_f_form} is unbounded.
Second, the objective  $f(\lambda, \mu)$ typically is not globally Lipschitz.
To some extent, these have collectively  led to numerous study in the prior literature that present remedies by utilizing some concrete forms of divergence function $\phi$ \cite{duchi2021learning,levy2020large,xu2023improved, shi2024distributionally,wang2024sample}. 
In what follows, we proceed to present some technical observations that would help us resolve the aforementioned issues in a general manner. 
\end{remark}

To handle the first issue mentioned in Remark \ref{remark_truncation}, we first establish that the minimization problem in \eqref{eq_phi_dro_f_form} indeed can be restricted to a bounded set without paying too much of a price in the optimality gap. 

\begin{lemma}\label{lemma_bounded_dual}
It holds that 
\begin{align}\label{eq_phi_dro_f_form_restrict}
    \cR(X) = \inf_{\lambda \in [0, \frac{2B}{\tau}], \mu \in [-B, B]}
    \cbr{f(\lambda, \mu) \coloneqq   
    \EE_P \sbr{ \lambda \tau + \mu + (\lambda \phi)^* (X - \mu) } }.
\end{align}
In addition, for any $\underline{\lambda} \geq 0$, we have 
\begin{align}\label{approx_restrict_lambda}
 \cR(X) \leq 
\inf_{\lambda \in [\underline{\lambda} , \frac{2B}{\tau}], \mu \in [-B, B]} f(\lambda, \mu) 
\leq \cR(X) + \underline{\lambda} \tau  .
\end{align}
\end{lemma}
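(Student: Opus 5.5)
We start from Lemma \ref{lemma_dual_dro_phi}, which gives $\cR(X) = \inf_{\lambda \geq 0, \mu} f(\lambda,\mu)$ with a minimizer $(\lambda^*,\mu^*)$. For \eqref{eq_phi_dro_f_form_restrict} it then suffices to show that some minimizer lies in $[0, 2B/\tau]\times[-B,B]$. Restricting the domain can only increase the infimum, so the content is the reverse inequality.

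\textbf{Restricting $\mu$.} For fixed $\lambda>0$, the term $(\lambda\phi)^*(y) = \sup_{x\geq 0}\{xy - \lambda\phi(x)\}$ is non-decreasing and convex in $y$, and $(\lambda\phi)^*(y) \geq y$ because $\phi(1)=0$. We compare $\mu$ with its projection $\bar\mu$ onto $[-B,B]$.
\begin{itemize}
\item If $\mu > B$, then $X-\mu \leq B - \mu < 0$. Since $\phi \geq 0$ and $x\geq 0$, we have $(\lambda\phi)^*(y) \geq -\lambda\phi(0)$ for $y<0$; more to the point, $\sup_{x \geq 0}\{xy - \lambda\phi(x)\}$ with $y<0$ is at least the value at $x=0$.
\item A cleaner route is to use that $\partial_y (\lambda\phi)^*(y)$ consists of optimal $x \geq 0$. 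Write $h(\mu) = \mu + \EE(\lambda\phi)^*(X-\mu)$. Its derivative $1 - \EE[\zeta_\mu]$ involves $\zeta_\mu(\omega) \in \partial(\lambda\phi)^*(X-\mu)$. When $\mu \geq B$, all arguments are $\leq 0$ and, by monotonicity of the subgradient, $\zeta_\mu \leq \zeta$ at argument $0$. The subgradient of $(\lambda\phi)^*$ at $0$ is $\arg\min\phi \ni 1$, so it lies in a set containing values $\leq 1$. Hence $h$ is non-decreasing for $\mu\geq B$.
\item Symmetrically, when $\mu \leq -B$ all arguments are $\geq 0$, so the subgradients are $\geq$ those at $0$, which can be chosen $\geq 1$. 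Hence $h$ is non-increasing there.
\end{itemize}
So the infimum over $\mu$ is attained in $[-B,B]$. Convexity of $h$ in $\mu$ makes the one-sided derivative argument rigorous.

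\textbf{Restricting $\lambda$.} For $\mu\in[-B,B]$ and any $\lambda$, we have $f(\lambda,\mu) \geq \lambda\tau + \mu + \EE(\lambda\phi)^*(X-\mu) \geq \lambda\tau + \mu + \EE[X-\mu] = \lambda\tau + \EE X \geq \lambda\tau - B$, using $(\lambda\phi)^*(y)\geq y$. Meanwhile $\cR(X) \leq B$, since $X\leq B$ and $\zeta$ is a density. Thus if $\lambda > 2B/\tau$, then $f(\lambda,\mu) > B \geq \cR(X)$, and such $\lambda$ cannot be optimal. Together with the $\mu$ restriction and the existence of a minimizer, this gives \eqref{eq_phi_dro_f_form_restrict}. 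The case $\lambda = 0$ is handled through the usual convention $(0\phi)^* = $ the closure, i.e. the limit as $\lambda\downarrow 0$, consistent with Lemma \ref{lemma_dual_dro_phi}.

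\textbf{Proving \eqref{approx_restrict_lambda}.} The left inequality holds because we minimize over a subset. If $\underline\lambda > 2B/\tau$ the interval is empty and the statement is vacuous or treated by convention, so assume $\underline\lambda \leq 2B/\tau$. Take the minimizer $(\lambda^*,\mu^*)$ from the first part. If $\lambda^*\geq\underline\lambda$ we are done. Otherwise set $\lambda' = \underline\lambda$. Since $\lambda' \geq \lambda^*$, we have $\lambda'\phi \geq \lambda^*\phi$ pointwise, so $(\lambda'\phi)^* \leq (\lambda^*\phi)^*$. Therefore
\[
f(\lambda',\mu^*) \leq \lambda'\tau + \mu^* + \EE(\lambda^*\phi)^*(X-\mu^*) = f(\lambda^*,\mu^*) + (\lambda'-\lambda^*)\tau \leq \cR(X) + \underline\lambda\tau.
\]

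The main obstacle is the $\mu$-restriction: handling subgradients of $(\lambda\phi)^*$ at $0$, where $\partial(\lambda\phi)^*(0)=\arg\min\phi$ may be an interval containing $1$, and treating $\lambda=0$ carefully.
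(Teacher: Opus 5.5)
Your proposal is correct and takes essentially the same route as the paper. It uses monotonicity of $\partial(\lambda\phi)^*$ together with $1\in\partial(\lambda\phi)^*(0)$ to confine $\mu$ to $[-B,B]$, the bound $f(\lambda,\mu)\geq \lambda\tau+\EE_P[X]\geq \lambda\tau-B$ against $\cR(X)\leq B$ to confine $\lambda$ to $[0,2B/\tau]$, and the antimonotonicity $(\lambda'\phi)^*\leq(\lambda\phi)^*$ for $\lambda\leq\lambda'$ to shift the minimizer up to $\underline{\lambda}$. Your caveat that the second claim needs $\underline{\lambda}\leq 2B/\tau$ (otherwise the infimum is over an empty set and equals $+\infty$) is valid and left implicit in the paper, and the abandoned first bullet of your $\mu$-step can simply be deleted.
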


\begin{proof}
For any $\lambda \geq 0$,  we have $0 \in \partial (\lambda \phi)(1)$ as  $1$ is a minimizer of $\phi$,
and hence $1 \in \partial (\lambda \phi)^*(0)$. 
Now define 
\begin{align}\label{f_def_sample}
f(\lambda, \mu; \omega) = \lambda \tau + \mu + (\lambda \phi)^*(X(\omega) - \mu),
\end{align}
 i.e.,
$f (\lambda, \mu) = \EE_{P} \sbr{f(\lambda, \mu; \omega)}$. 
For any $\lambda \geq 0$, if $\mu \geq B$, we have  $X(\omega) - \mu \leq 0$.
From monotonicity of subdifferential, we have $\partial (\lambda \phi)^* (X(\omega) - \mu) \leq 1$, and hence
\begin{align*}
d (\lambda, \mu; \omega) = 1 - \partial (\lambda \phi)^* (X(\omega) - \mu) \geq 0, ~ \forall d(\lambda, \mu; \omega) \in \partial_\mu f(\lambda, \mu; \omega).
\end{align*}
Consequently we have $\partial_\mu f(\lambda, \mu) \geq 0$ for any $\mu \geq B$. 
Similarly, one can  establish that 
for any $\mu \leq -B$, $\partial_\mu f(\lambda, \mu) \leq 0$.
Hence we obtain 
\begin{align}\label{fix_lambda_u_bound}
%\Argmin_{\mu \in \RR} f(\lambda, \mu) \subseteq [-B, B], ~ \forall \lambda \geq 0.
\inf_{\mu \in \RR} f(\lambda, \mu) = \inf_{\mu \in [-B, B]} f(\lambda, \mu).
\end{align}
In addition, since $\phi(1) = 0$, we have that for any $\lambda \geq 0$, 
$(\lambda \phi)^*(y) = \sup_{x \geq 0} xy - \phi(x) \geq y$.
This in turn implies 
\begin{align*}
f(\lambda, \mu) = \EE_P \sbr{ f(\lambda, \mu; \omega)}  \geq \EE_P \sbr{ \lambda \tau + \mu + X(\omega) - \mu} = \lambda \tau + \EE_P \sbr{X(\omega)}, ~
\forall \lambda \geq 0, \mu \in \RR
\end{align*}
From Lemma \ref{lemma_dual_dro_phi}, we also have 
$\inf_{\lambda \geq 0, \mu \in \RR} f(\lambda,  \mu) = \cR(X) \leq B$.
Hence from the above observation, for any $(\lambda^*, \mu^*)$ that is optimal to \eqref{eq_phi_dro_f_form}, we have 
\begin{align*}
B \geq f(\lambda^*, \mu^*) \geq \lambda^* \tau + E_P [X(\omega)] \geq \lambda^* \tau - B,
\end{align*}
from which we obtain $\lambda^* \leq \frac{2B}{\tau}$. 
Combining this observation with \eqref{fix_lambda_u_bound} implies \eqref{eq_phi_dro_f_form_restrict}.
Finally, note that for any $0 \leq \lambda \leq \lambda'$, we have
\begin{align*}
(\lambda \phi)^*(y) = \sup_{x} y x - \lambda \phi(x) \geq \sup_{x} y x - \lambda' \phi(x) = (\lambda' \phi)^*(y),
\end{align*}
and consequently 
\begin{align*}
f(\lambda, \mu) = \EE_P \sbr{\lambda \tau + \mu + (\lambda \phi)^* (X(\omega) - \mu)} 
\geq 
 \EE_P \sbr{\lambda \tau + \mu + (\lambda' \phi)^* (X(\omega) - \mu)} 
 = f(\lambda', \mu) + (\lambda - \lambda') \tau,
\end{align*}
from which we obtain \eqref{approx_restrict_lambda}.
\end{proof}

We now proceed to discuss the strategy for handling the potential non-Lipschitz property associated with $f(\lambda, \mu)$ in \eqref{eq_phi_dro_f_form_restrict}. 
In particular, 
in view of \eqref{eq_phi_dro_f_form_restrict} and the fact that $\norm{X}_\infty \leq B$, 
it is evident that the we can restrict our attention to $\partial_\lambda (\lambda \phi)^*(y)$ 
and $\partial_y (\lambda \phi)^*(y)$ 
for $\abs{y} \leq 2B$. 
Since
$
(\lambda \phi)^*(y) = \sup_{x} yx - \lambda \phi(x) 
$,
 $\phi$ grows superlinearly and is lower semicontinuous, 
for any $\lambda > 0$, one can readily see that the supremum is attainable. 
Consequently, from Danskin's theorem, we obtain 
\begin{align}
\partial_y (\lambda \phi)^*(y) &  = \mathrm{Conv} \rbr{\Argmax_x yx - \lambda \phi(x) } \overset{(a)}{=} \Argmax_x yx - \lambda \phi(x),  
\label{subgrad_u_dual}
\\
\partial_\lambda (\lambda \phi)^*(y) & = \mathrm{Conv} \rbr{\cbr{\phi(x^*): x^* \in \Argmax_{x} yx - \lambda \phi(x) }} 
\overset{(b)}{=} \cbr{\phi(x^*): x^* \in \Argmax_{x} yx - \lambda \phi(x) },
\label{subgrad_lambda_dual}
\end{align}
where $(a)$ follows from that $\Argmax_x yx - \lambda \phi(x)$ is convex,
and $(b)$ follows from  the fact that 
$
\{\phi(x^*): x^* \in \Argmax_{x} yx - \lambda \phi(x) \}
$ is convex
since
$
  \phi(\overline{x}^*)  = y (\overline{x}^* - x^*) / \lambda + \phi(x^*) 
$ 
for any $x^*, \overline{x}^* \in \Argmax_{x} yx - \lambda \phi(x).$
In view of \eqref{subgrad_u_dual} and \eqref{subgrad_lambda_dual}, the Lipschitz property associated with \eqref{eq_phi_dro_f_form_restrict} over the restricted domain clearly depends on the magnitude of the corresponding maximizer for defining the dual problem.

Given the above observation,   
our basic idea going forward is to replace the divergence function $\phi$ in \eqref{dro_phi}  by its truncated variant $\phi_L$, where 
\begin{align*}
\phi_L(\cdot) = \phi (\cdot) + \mathbbm{1}_{[0,L]}(\cdot),
\end{align*}
and $\mathbbm{1}_{[0,L]}(\cdot)$ denotes the set indicator function of $[0,L]$. 
Note that $\phi_L$ is also a valid divergence function whenever $L \geq 1$.
Let us define the truncated risk 
\begin{align}\label{phi_d_truncate}
        \cR_L(X) = \sup_{\zeta \in \mathfrak{D}} \int_{\Omega} X(\omega) \zeta(\omega) d P(\omega) , ~ \mathrm{s.t.}~ 
         \int_{\Omega} \phi_L(\zeta(w)) d P(w) \leq \tau.
\end{align}
With the truncation in place, it can be readily seen that $\partial_y (\lambda \phi_L)^*(y)$ and $\partial_\lambda (\lambda \phi_L)^*(y)$ are both bounded sets, 
and consequently \eqref{eq_phi_dro_f_form_restrict} with $\phi$ replaced by $\phi_L$ becomes a standard stochastic program with Lipschitz continuous objective. 
We formalize this in the ensuing Lemma \ref{lemma_lipschitz}.\footnote{
It could be worth mentioning that Lemma \ref{lemma_lipschitz} holds without requiring $\phi$ to have superlinear growth. 
}

\begin{lemma}\label{lemma_lipschitz}
For any divergence function $\phi$, any $L \geq 1$ and $\underline{\lambda} > 0$,
we have that
\begin{align}\label{def_f_truncate}
f_L(\lambda, \mu; \omega) \coloneqq  \lambda \tau + \mu + (\lambda \phi_L)^*(X(\omega) - \mu)
\end{align}
is Lipschitz continuous w.r.t $\mu$ (resp. $\lambda$) with modulus $L+1$ (resp. $\frac{2LB}{\underline{\lambda}} + \tau$) 
over domain 
$ [\underline{\lambda},  \frac{2B}{\tau}] \times [-B, B]$. 
In addition, we have 
\begin{align}\label{dual_stoch_obj_bound}
\abs{f_L(\lambda, \mu; \omega) } \leq (3 + 2L) B , ~ \forall 
\mu  \in [-B, B], ~ \lambda \in [\underline{\lambda},  \frac{2B}{\tau}].
\end{align}
\end{lemma}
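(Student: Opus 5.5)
Write $h_\lambda(y) = (\lambda \phi_L)^*(y) = \sup_{x \in [0,L]} \{ yx - \lambda \phi(x) \}$. The plan is to work directly with this representation and read off all three claims from elementary properties of a supremum over the compact set $[0,L]$.

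\textbf{Lipschitz in $\mu$.} For each fixed $x \in [0,L]$, the map $y \mapsto yx - \lambda\phi(x)$ is affine with slope $x \in [0,L]$. Hence $h_\lambda$ is a supremum of $L$-Lipschitz functions and is itself $L$-Lipschitz in $y$. Since $y = X(\omega) - \mu$, the term $(\lambda\phi_L)^*(X(\omega)-\mu)$ is $L$-Lipschitz in $\mu$. Adding the linear term $\mu$ gives modulus $L+1$.

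\textbf{Lipschitz in $\lambda$.} For fixed $y$ with $|y| \le 2B$, the map $\lambda \mapsto h_\lambda(y)$ is convex and non-increasing, being a supremum of affine functions of $\lambda$ with slopes $-\phi(x) \le 0$. Its slope is therefore controlled by $\phi(x^*)$, where $x^*$ is a maximizer.

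To bound $\phi(x^*)$, restrict to $x$ with $\phi(x) < \infty$. Compare the maximizer against $x = 1$, which gives the value $y$. From $y x^* - \lambda \phi(x^*) \ge y$ we get
\[
\lambda \phi(x^*) \le y(x^*-1) \le 2B \cdot L .
\]
Hence on $\lambda \ge \underline{\lambda}$ every supergradient-type slope satisfies $|\phi(x^*)| \le 2LB/\underline{\lambda}$. Two points need care here:
\begin{enumerate}
\item Attainment of the supremum. It is attained because $\phi$ is lower semicontinuous, so $x \mapsto yx - \lambda\phi(x)$ is upper semicontinuous on the compact set $[0,L]$.
\item Passing from slopes to a Lipschitz bound. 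Use Danskin's formula, as in \eqref{subgrad_lambda_dual}, with $\phi_L$ in place of $\phi$. Alternatively, argue directly: for $\lambda < \lambda'$,
\[
0 \le h_\lambda(y) - h_{\lambda'}(y) \le (\lambda'-\lambda)\,\phi(x_\lambda^*),
\]
where $x_\lambda^*$ is the maximizer at $\lambda$.
\end{enumerate}
Adding the term $\lambda\tau$ yields the modulus $\frac{2LB}{\underline{\lambda}} + \tau$.

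\textbf{Bound \eqref{dual_stoch_obj_bound}.} The lower bound uses $x=1$: $h_\lambda(y) \ge y \ge -2B$. The upper bound uses $\phi \ge 0$: $h_\lambda(y) \le \sup_{x\in[0,L]} yx \le 2BL$. Therefore
\[
|f_L| \le \lambda\tau + |\mu| + 2BL \le 2B + B + 2LB = (3+2L)B,
\]
using $\lambda\tau \le 2B$. In the other direction, $f_L \ge \mu + y = X(\omega) \ge -B$.

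\textbf{Main obstacle.} The only delicate step is the $\lambda$-Lipschitz bound: it relies on attainment of the supremum and on the comparison with $x=1$ to control $\phi(x^*)$. Everything else is direct.
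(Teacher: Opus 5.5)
Your proposal is correct and follows the paper's proof. The key step is the same: comparing the maximizer against $x=1$ gives $\phi_L(x^*)\le 2LB/\underline{\lambda}$, and the same bounds $-2B\le(\lambda\phi_L)^*(y)\le 2BL$ give \eqref{dual_stoch_obj_bound}. The only difference is cosmetic: you get the two Lipschitz moduli directly, as a supremum of $L$-Lipschitz affine maps in $y$ and via a two-point comparison in $\lambda$, rather than through the Danskin formulas \eqref{subgrad_u_dual}--\eqref{subgrad_lambda_dual}, which makes your version slightly more self-contained.
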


\begin{proof}
For any $x^* \in \Argmax yx - \lambda \phi(x)$ (note that such $x^*$ exists as $\phi_L$ is lower semicontinuous), we have $0 \leq x^* \leq L$. 
Combining this with \eqref{subgrad_u_dual}, we have that 
$f_L(\lambda, \cdot; \omega)$ is Lipschitz continuous with modulus $L+1$. 

On the other hand, for any $\abs{y} \leq 2B$, since $\phi_L(1) = 0$, from the definition of $x^*$, we obtain  
$y x^* - \lambda \phi_L(x^*) \geq  y $.
This in turn implies 
$
\phi_L(x^*) \leq \frac{y x^* - y }{\lambda} \leq \frac{2 LB }{\underline{\lambda}}.
$ 
Combining this with \eqref{subgrad_lambda_dual}, 
we obtain that $f_L(\cdot, \mu; \omega)$ is Lipschitz continuous with modulus $\frac{2LB}{\underline{\lambda}} + \tau $. 
In addition, for any $\abs{y} \leq 2B$, we have  $(\lambda \phi_L)^*(y) \geq y \geq -2B$, and 
$(\lambda \phi_L)^*(y) \leq y x^* \leq 2B L$.
Consequently, it holds that
\begin{align*}
\abs{f_L(\lambda, \mu; \omega) } \leq \frac{2B}{\tau} \cdot \tau + B + 2B L   \leq (3 + 2L) B,
~~~ 
\forall \mu  \in [-B, B], ~ \lambda \in [\underline{\lambda}, \frac{2B}{\tau}],
\end{align*}
from which we conclude the proof.
\end{proof}

In view of   Lemma \ref{lemma_bounded_dual} and \ref{lemma_lipschitz},  one can then consider applying the standard SAA result to estimate $\cR_L(X)$. 
Hence conceptually the only remaining task is to control the difference between $\cR_L(X)$ and $\cR(X)$, which we discuss below.
In particular, we will establish that the approximation error introduced by the truncation scales inversely with respect to the growth of divergence function $\phi$. 

\begin{lemma}\label{lemma_risk_via_truncation}
Suppose $\lim_{x \to \infty} \phi(x) / x = +\infty$.
For any $L \geq 1$, it holds that 
\begin{align}\label{ineq_truncate_tail_via_growth}
\cR(X) - \frac{B\tau}{g_\phi(L)} \leq \cR_L(X) \leq \cR(X).
\end{align}
\end{lemma}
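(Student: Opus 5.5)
The upper bound $\cR_L(X) \leq \cR(X)$ follows because $\phi_L \geq \phi$ pointwise. Hence the feasible set of \eqref{phi_d_truncate} is contained in $\mathfrak{M}_\tau(P)$, and the supremum over a smaller set can only be smaller. For the lower bound, I will take an arbitrary feasible density $\zeta \in \mathfrak{M}_\tau(P)$ (or an $\eta$-optimal one, with $\eta \to 0$ at the end). From it I will construct a density $\tilde\zeta$ with $\tilde\zeta \leq L$, $\int \phi(\tilde\zeta)\, dP \leq \tau$, and
\[
\EE_P[X\tilde\zeta] \geq \EE_P[X\zeta] - \frac{B\tau}{g_\phi(L)}.
\]

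The construction is to cap $\zeta$ at $L$ and redistribute the excess mass. Let $A = \{\zeta > L\}$ and $m = \int_A (\zeta - L)\, dP$. First bound $m$ via the growth function. On $A$, Remark \ref{remark_monotone_growth} gives $\phi(\zeta)/\zeta \geq g_\phi(L)$, so
\[
m \leq \int_A \zeta\, dP \leq \frac{1}{g_\phi(L)} \int_A \phi(\zeta)\, dP \leq \frac{\tau}{g_\phi(L)}.
\]
Next, define $\tilde\zeta = \min(\zeta, L) + c\,(L - \min(\zeta,L))$ with $c$ chosen so that $\int \tilde\zeta\, dP = 1$. Since $\int \min(\zeta,L)\, dP = 1-m$ and $\int (L-\min(\zeta,L))\, dP \geq L - 1 + m \geq m$, such a $c \in [0,1]$ exists. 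Then $\tilde\zeta \leq L$.

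For feasibility I would rather use a convex combination than a pointwise argument: set $\tilde\zeta = (1-\theta)\min(\zeta,L)/(1-m) + \theta \cdot 1$, or more simply redistribute via mixing with the constant density $1$. Writing $\min(\zeta,L) = (1-m)\,\xi$ with $\xi$ a density, the candidate is $\tilde\zeta = (1-m)\xi + m\cdot 1$. It satisfies $\tilde\zeta \leq (1-m)L + m \leq L$ whenever $\xi \leq L/(1-m)$. That condition can fail, so I will instead simply use $\tilde\zeta = \min(\zeta,L) + m$. This is a density, and $\tilde\zeta \leq L + m$, which is $L$-feasible after a negligible adjustment. The cleanest fix is probably to argue feasibility by convexity: $\tilde\zeta = \min(\zeta,L) + m\cdot 1$ is a convex combination of $\min(\zeta,L)/(1-m)$ and $1$ with weights $1-m$ and $m$. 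Convexity and $\phi(1)=0$ then give
\[
\int\phi(\tilde\zeta)\,dP \leq (1-m)\int \phi\bigl(\min(\zeta,L)/(1-m)\bigr)\,dP.
\]
I expect this step, controlling both feasibility and the cap simultaneously, to be the main obstacle. My first attempt would be to exploit monotonicity of $\phi$ on $[1,\infty)$, namely $\phi(\min(\zeta,L)) \leq \phi(\zeta)$, and to choose the redistribution pointwise between $\min(\zeta,L)$ and $\max(\min(\zeta,L),1)$ so that each value moves toward $1$. Filling the slack $\int (1-\zeta)_+\,dP$ (which equals $m$ plus the excess above $1$, hence is at least $m$) by raising values below $1$ toward $1$ keeps every point between its original value and $1$. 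By convexity and $\phi(1)=0$, this does not increase $\phi$ pointwise, so feasibility is preserved and the cap $L \geq 1$ is respected.

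Finally, the objective changes only by removing mass $m$ on $A$ and adding mass $m$ elsewhere. Since $|X| \leq B$, the change is bounded by $2Bm$; more carefully, $X \geq -B$ and the removed part satisfies $\int_A X(\zeta - L) \leq Bm$, which with the added part gives a loss of at most $2Bm$. To obtain the sharper constant $B\tau/g_\phi(L)$, I would use that the shift $X \mapsto X + B$ changes neither $\cR$ nor $\cR_L$ by anything beyond the additive constant. Applying the argument to $X+B \in [0,2B]$ removes the gain term and leaves a loss of at most $2Bm$. If needed, I would instead use the translation $X \mapsto X - \EE$-type centering, or accept the factor $2$, and then tighten the bound on $m$ using $\int_A \zeta\,dP \leq \tau/g_\phi(L)$. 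Taking the supremum over $\zeta$ completes the proof.
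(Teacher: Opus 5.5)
Your upper bound via $\phi_L\ge\phi$ is fine, and your lower-bound argument is the paper's. The estimate $m=\int_{\{\zeta>L\}}(\zeta-L)\,dP\le\tau/g_\phi(L)$ is its bound, and capping at $L$ while raising values below $1$ toward $1$ is what its $\zeta^{v^*}=\max\{\min\{\zeta,L\},v^*\}$ does.

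Where you fall short, the factor $2$ in the objective loss, the fault is in the statement, not your proof, so stop trying to remove it. The loss is $\int_{\{\zeta>L\}}(\zeta-L)X\,dP-\int(\tilde\zeta-\min\{\zeta,L\})X\,dP\le m\,(\sup X-\inf X)$. No shift or centering changes $\sup X-\inf X$, which can equal $2B$. The paper reaches $B\tau/g_\phi(L)$ only by discarding $\int_{\{\zeta<v^*\}}(v^*-\zeta)X\,dP$ as nonnegative, which requires $X\ge 0$. For $X$ with values in $[-B,B]$ the stated inequality fails. Here is a concrete instance:
\begin{itemize}
\item Take $\Omega=\{\omega_1,\omega_2\}$, $P(\omega_1)=10^{-3}$, $X(\omega_1)=B$, $X(\omega_2)=-B$, $\tau=0.1$, $L=3$.
\item Let $\phi(x)=\max\{x-1,0\}$ for $x\in[0,10^3]$ and $\phi(x)=+\infty$ otherwise. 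This is convex, lower semicontinuous and superlinear.
\item Write $q=P(\omega_1)\zeta(\omega_1)$. The objective is $B(2q-1)$, and for $q\ge 10^{-3}$ the divergence is $q-10^{-3}$. Hence $\cR(X)=B(2\cdot 0.101-1)$.
\item The cap $\zeta(\omega_1)\le 3$ forces $q\le 0.003$, so $\cR_L(X)=B(2\cdot 0.003-1)$.
\item Since $g_\phi(3)=2/3$, this gives $\cR(X)-\cR_L(X)=0.196B>0.15B=B\tau/g_\phi(3)$.
\end{itemize}
What your argument does establish is the correct version of the lemma: $\cR(X)-2B\tau/g_\phi(L)\le\cR_L(X)$ for $|X|\le B$. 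It also gives the stated bound when $0\le X\le B$, because the term from the added mass is then nonnegative.

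You should also prune the construction, since the intermediate candidates are not valid:
\begin{itemize}
\item $\min\{\zeta,L\}+m$ exceeds $L$ on $\{\zeta>L\}$.
\item Both it and the lift $\min\{\zeta,L\}+c(L-\min\{\zeta,L\})$ push values in $[1,L)$ away from $1$. That can raise $\int\phi(\tilde\zeta)\,dP$ above $\tau$.
\item The convex-combination bound does not help, because $\min\{\zeta,L\}/(1-m)$ need not be feasible.
\end{itemize}
Only your last idea works, and you should write it explicitly. For example, take $\tilde\zeta=\min\{\zeta,L\}+t[1-\zeta]_+$ with $t=m/\int[1-\zeta]_+\,dP$ (and $t=0$ if $m=0$). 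Then $t\in[0,1]$, because $\int[1-\zeta]_+\,dP=\int[\zeta-1]_+\,dP\ge m$ when $L\ge 1$. Every value $\tilde\zeta(\omega)$ lies between $\zeta(\omega)$ and $1$. Hence $\phi(\tilde\zeta)\le\phi(\zeta)$ pointwise, $\tilde\zeta\le L$, and $\int\tilde\zeta\,dP=1$.
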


\begin{proof}
We proceed to first show that for any $\zeta$ feasible to \eqref{dro_phi}, there exists $\tilde{\zeta}$ feasible to \eqref{phi_d_truncate} such that 
    \begin{align}\label{approx_risk_truncate}
     \EE_P [ \tilde{\zeta} X ]
    \geq \EE_P \sbr{\zeta X} - 
    B \EE_P \sbr{\zeta - L}_+
    \end{align}
for any $L \geq 1 $.
Without loss of generality, let us assume that $\zeta > L$ on for some set $A$ with $P(A) > 0$.     
Otherwise one can take $\tilde{\zeta} = \min \cbr{\zeta, L}$ and \eqref{approx_risk_truncate} holds trivially. 
For any $v \in [0,1]$, let us consider the truncation of $\zeta$, denoted by $\zeta^v$, defined as
$
\zeta^v(\omega) = \zeta(\omega) \mathbbm{1}_{\cbr{\zeta(\omega) \in [v, L]}}
+ L \mathbbm{1}_{\cbr{\zeta (\omega) > L }}
+ v \mathbbm{1}_{\cbr{\zeta(\omega) < v}}
$.
We have 
\begin{align*}
\int \zeta^0 dP(\omega) = \int \min \cbr{\zeta, L} dP(\omega) < 1; ~
\int \zeta^1 dP(\omega) = \int 1 d P (\omega)  + \int_{\omega: \zeta(\omega) > L} (L-1) dP(\omega) \geq 1. 
\end{align*}
Since $\psi(v) \coloneqq \int \zeta^v dP(\omega)$ is continuous in $v$,  there exists $v^* \in (0,1]$ such that 
$ \zeta^{v^*}$ satisfies 
\begin{align*}
\int  \zeta^{v^*} dP(\omega) = 1, ~  \zeta^{v^*} \geq 0, ~ \int \phi( \zeta^{v^*}) dP(\omega) \leq \tau,
\end{align*}
where the last inequality follows from the fact that $1$ is the minimizer of $\phi(x)$, $\phi$ is convex, and 
that $\abs{\zeta^{v^*}(\omega) - 1} \leq \abs{\zeta(\omega) - 1}$ for any $\omega \in \Omega$ (which implies $\phi(\zeta^{v^*}(\omega) \leq \phi(\zeta(\omega))$). 
Now clearly, we have 
\begin{align*}
\EE_P \sbr{ ( \zeta^{v^*} - \zeta )X } 
& = \int_{\omega: \zeta(\omega) > L} (L - \zeta(\omega) )X (\omega) dP(\omega)
+ \int_{\omega: \zeta(\omega) < v^*} (v^* - \zeta(\omega)) X  dP(\omega)  \\
& \geq \int_{\omega: \zeta(\omega) > L} (L - \zeta(\omega) )X (\omega) dP(\omega) \\ 
& \geq - B \int [\zeta - L]_+ dP.
\end{align*}
Hence \eqref{approx_risk_truncate} holds with $\tilde{\zeta} = \zeta^{v^*}$. 
Now let $g_\phi$ be the growth function defined as in \eqref{def_growth_function}, we obtain 
\begin{align}\label{prob_mass_truncate}
 \EE_P \sbr{\zeta - L}_+ \leq \int_{\zeta \geq L}  \zeta(\omega) dP(\omega) =  \int_{\zeta \geq L}  \frac{ \phi(\zeta(\omega))}{\phi(\zeta(\omega)) / \zeta(\omega)}  dP(\omega)
 \overset{(a)}{\leq} \int_{\zeta \geq L} \frac{\phi(\zeta)}{g_\phi(L)} dP(\omega) \overset{(b)}{\leq} \frac{\tau}{g_\phi(L)},
\end{align}
where $(a)$ follows from the definition of $g_\phi$ and $L \geq 1$, and $(b)$ follows from the fact that $\zeta$ is feasible to \eqref{dro_phi}.
Consider $\zeta$ being any feasible solution to \eqref{dro_phi}. From  \eqref{approx_risk_truncate} and \eqref{prob_mass_truncate},  we obtain
\begin{align*}
\cR_L(X) \geq \EE_P [{\tilde{\zeta} X} ] \geq \EE_P \sbr{\zeta X} - B \EE_P \sbr{\zeta - L}_+ \geq \EE_P \sbr{\zeta X}  - \frac{B \tau}{g_\phi(L)}.
\end{align*}
We can then conclude the proof by taking supremum over $\zeta$.
\end{proof}

We are now ready to present the sample complexity result for 
 estimating $\cR(X)$ via sample average approximation.
To facilitate our discussion, let us define $\cR_{n, L}$ as the SAA estimator defined as in \eqref{dro_phi_empirical}, with $\phi$ replaced by $\phi_L$,
and 
\begin{align}\label{eq_def_optimal_dual}
(\lambda^*, \mu^*) \in \Argmin_{\lambda \in [\underline{\lambda}, \frac{2B}{\tau}], \mu \in [-B, B]} \EE_P \sbr{f_L(\lambda, \mu; \omega)}, ~~~ 
(\hat{\lambda}^*, \hat{\mu}^*) \in \Argmin_{\lambda \in [\underline{\lambda}, \frac{2B}{\tau}], \mu \in [-B, B]} \EE_{P_n} \sbr{f_L(\lambda, \mu; \omega)}.
\end{align}
Note that $(\lambda^*, \mu^*)$ and $(\hat{\lambda}^*, \hat{\mu}^*)$ are well-defined give Lemma \ref{lemma_lipschitz}.

\begin{theorem}\label{thrm_sample_p_independent_truncation}
Suppose $\lim_{x \to \infty} \phi(x) / x = \infty$. 
For any $\epsilon > 0$, let 
 \begin{align}\label{eq_lip_m_constants}
 L(\epsilon)  =  g_\phi^{-1}\rbr{\frac{4 B\tau }{ \epsilon}} , ~~ \underline{\lambda}(\epsilon) =\frac{\epsilon}{4\tau}, ~ ~
 \overline{L} (\epsilon) = {L (\epsilon) +1 +  \frac{2L(\epsilon)B}{\underline{\lambda}(\epsilon)}  + \tau},  ~~ 
 M(\epsilon) = 
  (3 + 2L(\epsilon) ) B  .
 \end{align}
 Then for any $\delta \in (0,1)$, we have $\abs{\cR(X) - \cR_n(X)} \leq \epsilon$ with probability at least $1-\delta$
 if 
 \begin{align*}
 n \geq 
 \frac{
 32 M(\epsilon)^2 \log(256 B^2 \overline{L}^2 (\epsilon)/ (\tau \epsilon^2 \delta)) 
 }{\epsilon^2}.
 \end{align*}
\end{theorem}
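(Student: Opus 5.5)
The plan is to split the error into a deterministic truncation/domain-restriction part and a stochastic uniform-convergence part. Throughout, $L = L(\epsilon)$ and $\underline{\lambda} = \underline{\lambda}(\epsilon)$ are as in \eqref{eq_lip_m_constants}.

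\textbf{Step 1: the population side.} We apply Lemma \ref{lemma_bounded_dual} to the divergence $\phi_L$, which is still a valid divergence since $L \geq 1$, so Lemma \ref{lemma_dual_dro_phi} applies to it. Combined with \eqref{approx_restrict_lambda}, this gives
\[
\cR_L(X) \leq F^* \coloneqq \min_{[\underline{\lambda}, 2B/\tau]\times[-B,B]} \EE_P f_L \leq \cR_L(X) + \underline\lambda\tau .
\]
Lemma \ref{lemma_risk_via_truncation} gives $\cR(X) - B\tau/g_\phi(L) \leq \cR_L(X) \leq \cR(X)$. By Proposition \ref{prop_inverse_g}, we aim for $g_\phi(L) \geq 4B\tau/\epsilon$, so that $B\tau/g_\phi(L) \le \epsilon/4$. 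This equality case needs care. I would use continuity of $g_\phi$ from Remark \ref{remark_monotone_growth}; if that fails, I would replace $L$ by $L + \eta$ with $\eta \downarrow 0$, since the bound is stable in $L$. Together with $\underline\lambda\tau = \epsilon/4$, this yields $|F^* - \cR(X)| \le \epsilon/4$.

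\textbf{Step 2: the empirical side.} The same argument applies verbatim with $P$ replaced by $P_n$, because Lemmas \ref{lemma_bounded_dual} and \ref{lemma_risk_via_truncation} hold for any nominal measure, in particular the empirical one. Writing $\hat F^* \coloneqq \min \EE_{P_n} f_L$ over the same box, we get $|\hat F^* - \cR_n(X)| \le \epsilon/4$. The key point is that the constants $L$, $\underline\lambda$ and the box are independent of the measure, so the bound is $P$-independent.

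\textbf{Step 3: uniform concentration.} It then suffices to show $\sup_{(\lambda,\mu)} |\EE_{P_n} f_L - \EE_P f_L| \le \epsilon/2$ with probability at least $1-\delta$, since this gives $|\hat F^* - F^*| \le \epsilon/2$. By Lemma \ref{lemma_lipschitz}, each $f_L(\cdot,\cdot;\omega)$ is bounded by $M(\epsilon)$ and is $\overline L(\epsilon)$-Lipschitz in the $\ell_1$ sense on the box $[\underline\lambda, 2B/\tau] \times [-B,B]$.

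We take a grid of mesh $\eta = \epsilon/(8\overline L)$ on the box. The box has side lengths at most $2B/\tau$ and $2B$, so the grid has at most $N \le (2B/\tau)(2B)/\eta^2 \cdot O(1)$ points, i.e. $N \lesssim 256 B^2\overline L^2/(\tau\epsilon^2)$ after adjusting constants. Off-grid points contribute at most $2\overline L\eta \le \epsilon/4$ by the Lipschitz property, applied to both the population and the empirical averages. At each grid point, Hoeffding's inequality with range $2M$ gives a deviation at most $\epsilon/4$ with failure probability $2\exp(-n\epsilon^2/(32M^2))$. A union bound over the $N$ points then requires
\[
n \geq 32M^2\log(2N/\delta)/\epsilon^2,
\]
which matches the stated condition.

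\textbf{Main obstacle.} I expect the only delicate points to be two technicalities: matching the covering-number constants to the exact logarithmic term, and the attainment/equality issue for $g_\phi^{-1}$ in Step 1. The structural argument itself is just Steps 1–3 combined via the triangle inequality
\[
|\cR - \cR_n| \le |\cR - F^*| + |F^* - \hat F^*| + |\hat F^* - \cR_n| \le \epsilon/4 + \epsilon/2 + \epsilon/4 = \epsilon .
\]
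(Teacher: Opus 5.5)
Your proposal is correct and follows the paper's proof essentially step for step. Both arguments sandwich the restricted dual values via Lemma \ref{lemma_bounded_dual} and control the truncation via Lemma \ref{lemma_risk_via_truncation}, each applied to $P$ and to $P_n$. Both then use an $\varepsilon$-net with mesh $\epsilon/(8\overline{L}(\epsilon))$, Hoeffding, and a union bound, with the same $\epsilon/4$ split of the error budget.

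Your concern about the equality case is well founded, and the paper glosses over it. Proposition \ref{prop_inverse_g} only gives $g_\phi(L)\le 4B\tau/\epsilon$; for CVaR one even has $g_\phi(L(\epsilon))=0$. Rather than perturbing $L$, it is cleaner to note that $[\zeta-L]_+$ vanishes off $\{\zeta>L\}$, and that $g_\phi(x)\ge 4B\tau/\epsilon$ for all $x>L$. Restricting the chain \eqref{prob_mass_truncate} to $\{\zeta>L\}$ then gives $B\,\EE_P[\zeta-L]_+\le \epsilon/4$ directly, for both $P$ and $P_n$.
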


\begin{proof}
From Lemma \ref{lemma_bounded_dual}, 
we have 
\begin{align}
\cR_L(X)   & \leq   \EE_P \sbr{f_L (\lambda^*, \mu^*; \omega)} \leq   \cR_L(X) + \underline{\lambda} \tau , \label{risk_via_emp_restrict} \\
   \cR_{n,L}(X)  & \leq  \EE_{P_n} \big[{f_L (\hat{\lambda}^*, \hat{\mu}^*; \omega)} \big] \leq \cR_{n,L}(X) +  \underline{\lambda}  \tau . \label{risk_via_emp_restrict_truncate}
\end{align}
For any $\varepsilon > 0$, let $\cN_1$ (resp. $\cN_2$) be an $\varepsilon$-net of $[\underline{\lambda}, \frac{2B}{\tau}]$ (resp. $[-B, B]$). 
Correspondingly, for any $\lambda \in  [\underline{\lambda}, \frac{2B}{\tau}]$ and $\mu \in [-B, B]$, let 
$\lambda_\varepsilon$ and $\mu_\varepsilon$ be their corresponding closest points in $\cN_1$ and $\cN_2$ respectively. 
From Hoeffding's inequality, \eqref{dual_stoch_obj_bound} in Lemma \ref{lemma_lipschitz}, and the union bound, we have that for any $\delta \in (0,1)$, with probability $1-\delta$,
\begin{align*}
\abs{
\EE_{P} \sbr{f_L (\lambda_\varepsilon, \mu_\varepsilon; \omega)}  - \EE_{P_n} \sbr{f_L (\lambda_\varepsilon, \mu_\varepsilon; \omega)}
}
\leq 
(3+2L)B \sqrt{\frac{2 \log(4 B^2/(\tau \varepsilon^2 \delta))}{n}}, 
~ \forall \lambda \in [\underline{\lambda}, \frac{2B}{\tau}], \mu \in [-B, B].
\end{align*}
Consequently, from Lemma \ref{lemma_lipschitz},  we have
\begin{align*}
\abs{
\EE_{P} \sbr{f_L (\lambda, \mu; \omega)}  - \EE_{P_n} \sbr{f_L (\lambda, \mu; \omega)}
}
\leq 
(3+2L)B \sqrt{\frac{2 \log(4B^2/(\tau \varepsilon^2 \delta))}{n}} 
+ 2  \rbr{L+1 +  \frac{2LB}{\underline{\lambda}} + \tau} \varepsilon
\end{align*}  
for any $\lambda \in [\underline{\lambda}, \frac{2B}{\tau}], \mu \in [-B, B]$ 
with probability $1-\delta$.
Combining the above observation with \eqref{risk_via_emp_restrict} and \eqref{risk_via_emp_restrict_truncate}, we obtain that 
\begin{align}\label{ineq_contration_risk_truncated_raw}
\abs{\cR_L(X) - \cR_{n,L}(X)} \leq 
\underline{\lambda} \tau + 
(3+2L)B \sqrt{\frac{2 \log(4B^2/(\tau \varepsilon^2 \delta))}{n}} 
+ 2  \rbr{L+1 +  \frac{2LB}{\underline{\lambda}} + \tau} \varepsilon
\end{align}
holds with probability $1-\delta$,
which together with Lemma \ref{lemma_risk_via_truncation}, implies 
\begin{align*}
\abs{\cR(X) - \cR_{n}(X)} \leq 
\underline{\lambda} \tau + 
(3+2L)B \sqrt{\frac{2 \log(4B^2/(\tau \varepsilon^2 \delta))}{n}} 
+ 2  \rbr{L+1 +  \frac{2LB}{\underline{\lambda}} + \tau} \varepsilon
+ \frac{B \tau}{g_\phi(L)}.
\end{align*}
The desired claim then follows by substituting the choice of 
$\underline{\lambda} = \underline{\lambda}(\epsilon)$,
$L = L(\epsilon)$, and $\varepsilon = \frac{\epsilon}{8 \overline{L}(\epsilon)}$ into the above relation.
\end{proof}

It should be noted that Theorem \ref{thrm_sample_p_independent_truncation}  can also be applied to divergence functions $\phi$ with sublinear growth, provided that the estimation accuracy $\epsilon$ is not too small such that $L(\epsilon)$ is well-defined. 
As our last result in this section, one can also consider strengthening  Theorem \ref{thrm_sample_p_independent_truncation} to the following instance-dependent sample complexity bound when estimating $\cR(X)$. 
It could be worth mentioning here that the obtained instance-dependent bound holds for any divergence function $\phi$ and does not require its superlinear growth.

\begin{theorem}\label{thrm_instance_dependent}
 For $\epsilon > 0$, 
 let $\zeta^*_{\epsilon}$ be an $(\epsilon/8)$-optimal solution of \eqref{dro_phi}. 
 Define 
 \begin{align}\label{def_L_instance}
 L(\epsilon) =\max \cbr{g_\phi^{-1}\rbr{\frac{4 B\tau }{ \epsilon}},  \inf \cbr{L: \EE_P [\zeta^*_\epsilon - L]_+ \leq \frac{\epsilon}{8B}} }.
 \end{align}
%  and $\underline{\lambda} (\epsilon) = \epsilon / (8\tau)$. 
In addition, let $\underline{\lambda} (\epsilon), \overline{L}(\epsilon)$ and $M(\epsilon)$ be defined as in \eqref{eq_lip_m_constants}.
 Then for any $\delta \in (0,1)$, we have $\abs{\cR(X) - \cR_{n,L}(X)} \leq \epsilon$ with probability at least $1-\delta$
 if 
 \begin{align*}
 n \geq 
 \frac{
 32 M(\epsilon)^2 \log(256 B^2 \overline{L}^2 (\epsilon)/ (\tau \epsilon^2 \delta)) 
 }{\epsilon^2}.
 \end{align*}
\end{theorem}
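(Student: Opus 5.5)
The argument follows the proof of Theorem \ref{thrm_sample_p_independent_truncation}. The only ingredient that changes is the truncation bias bound, Lemma \ref{lemma_risk_via_truncation}. Lemmas \ref{lemma_bounded_dual} and \ref{lemma_lipschitz} hold for an arbitrary divergence function and an arbitrary $L \geq 1$, so the concentration step carries over verbatim. With $L = L(\epsilon)$ from \eqref{def_L_instance} and $\underline{\lambda} = \underline{\lambda}(\epsilon)$, inequality \eqref{ineq_contration_risk_truncated_raw} gives, with probability at least $1-\delta$,
\begin{align*}
\abs{\cR_L(X) - \cR_{n,L}(X)} \leq \underline{\lambda}\tau + M(\epsilon)\sqrt{\frac{2\log(4B^2/(\tau\varepsilon^2\delta))}{n}} + 2\overline{L}(\epsilon)\varepsilon .
\end{align*}
Taking $\varepsilon = \epsilon/(8\overline{L}(\epsilon))$ and using the stated lower bound on $n$, the right-hand side is at most $\epsilon/4 + \epsilon/4 + \epsilon/4$, up to the same constant bookkeeping as in the earlier theorem.

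It remains to bound $\cR(X) - \cR_L(X)$ without using superlinear growth. For this I would reuse the first half of the proof of Lemma \ref{lemma_risk_via_truncation}. That argument, inequality \eqref{approx_risk_truncate}, shows that any $\zeta$ feasible for \eqref{dro_phi} can be modified to a $\tilde{\zeta}$ feasible for \eqref{phi_d_truncate} with $\EE_P[\tilde\zeta X] \geq \EE_P[\zeta X] - B\,\EE_P[\zeta - L]_+$. It uses only convexity of $\phi$, the fact that $1$ minimizes $\phi$, and $L \geq 1$; it does not use growth. Apply it to $\zeta = \zeta^*_\epsilon$. Since $L(\epsilon)$ is at least the infimum in \eqref{def_L_instance} and $L \mapsto \EE_P[\zeta^*_\epsilon - L]_+$ is continuous and nonincreasing, we get $\EE_P[\zeta^*_\epsilon - L(\epsilon)]_+ \leq \epsilon/(8B)$. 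Therefore
\begin{align*}
\cR_L(X) \geq \EE_P[\zeta^*_\epsilon X] - \frac{\epsilon}{8} \geq \cR(X) - \frac{\epsilon}{4}.
\end{align*}
Combined with $\cR_L(X) \leq \cR(X)$, this gives $0 \leq \cR(X) - \cR_L(X) \leq \epsilon/4$. The triangle inequality then yields $\abs{\cR(X) - \cR_{n,L}(X)} \leq \epsilon$.

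The main point to check is that nothing in the concentration step secretly needs superlinear growth.
\begin{itemize}
\item \emph{Attainment of the dual.} Lemma \ref{lemma_dual_dro_phi} and Lemma \ref{lemma_bounded_dual} are applied with $\phi_L$. Since $\phi_L$ has bounded domain, its conjugate is finite and the maximizers in \eqref{subgrad_u_dual}–\eqref{subgrad_lambda_dual} exist.
\item \emph{Well-definedness of $L(\epsilon)$.} The first term $g_\phi^{-1}(4B\tau/\epsilon)$ may be $+\infty$ for sublinear $\phi$. In that case I would read \eqref{def_L_instance} as using only the second term, together with $L \geq 1$. This is harmless because the bias bound above uses only the second term.
\item \emph{Finiteness of the second term.} The infimum in \eqref{def_L_instance} is finite because $\zeta^*_\epsilon \in L^1(P)$, so $\EE_P[\zeta^*_\epsilon - L]_+ \to 0$ as $L \to \infty$ by dominated convergence.
\end{itemize}
With these checks the argument is exactly the earlier one with the bias term replaced.
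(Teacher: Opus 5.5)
Your proposal is correct and matches the paper's proof. Both reuse the concentration bound \eqref{ineq_contration_risk_truncated_raw} with $\varepsilon = \epsilon/(8\overline{L}(\epsilon))$, and both control the truncation bias by applying \eqref{approx_risk_truncate} to $\zeta^*_\epsilon$, which gives $\cR(X) - \epsilon/4 \leq \cR_L(X) \leq \cR(X)$. Your version is slightly cleaner: on the upper side you use $\cR_L(X) \leq \cR(X)$ rather than carrying the paper's extra $B\tau/g_\phi(L)$ term, so your argument genuinely never needs the $g_\phi^{-1}$ component of $L(\epsilon)$.
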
 

\begin{proof}
We first note that $L(\epsilon)$ is well-defined. Indeed, since $\EE_P \sbr{\zeta^*_\epsilon - L}_+ \leq \EE_P \sbr{\zeta^*_\epsilon} = 1$, the dominated convergence theorem implies that 
$\lim_{L \to \infty} \EE_P \sbr{\zeta^*_\epsilon - L}_+  = 0$. 
Consequently from \eqref{approx_risk_truncate}, the definition of $\zeta_\epsilon^*$, and the definition of $L(\epsilon)$ in \eqref{def_L_instance},
we obtain
\begin{align*}
%\label{err_truncation_instance}
 \cR(X) - \frac{\epsilon}{4} \leq  \EE_P [\zeta^*_\epsilon X ]  \leq \cR_L(X) \leq \cR(X) , 
\end{align*}
where $L = L(\epsilon)$. 
Note that \eqref{ineq_contration_risk_truncated_raw} still holds. 
Combining the above relation with \eqref{ineq_contration_risk_truncated_raw} then yields 
\begin{align*}
&  \cR(X)  - \big({
 \underline{\lambda} \tau + 
(3+2L)B \sqrt{\frac{2 \log(4B^2/(\tau \varepsilon^2 \delta))}{n}} 
+ 2  \rbr{L+1 +  \frac{2LB}{\underline{\lambda}} + \tau} \varepsilon
} \big)- \frac{\epsilon}{4}  \\
   \leq  & 
 \cR_{n,L}(X)  \\
 \leq  &  \cR(X) + \underline{\lambda} \tau + 
(3+2L)B \sqrt{\frac{2 \log(4B^2/(\tau \varepsilon^2 \delta))}{n}} 
+ 2  \rbr{L+1 +  \frac{2LB}{\underline{\lambda}} + \tau} \varepsilon
+ \frac{B\tau}{g_\phi(L)} .
\end{align*}
The desired claim then follows by substituting the choice of 
$\underline{\lambda} = \underline{\lambda}(\epsilon)$,
$L = L(\epsilon)$ into the above relation, 
while taking 
$\varepsilon = \frac{\epsilon}{8 \overline{L}(\epsilon)}$. 
\end{proof}

In view of Theorem \ref{thrm_sample_p_independent_truncation}, the sample complexity for obtaining an $\epsilon$-accurate $\cR_n(X)$ as an estimate of $\cR(X)$ is bounded by 
$\cO(M(\epsilon)^2/\epsilon^2) = \cO(B^2 L(\epsilon)^2 / \epsilon^2)$. 
It turns out that this can be further improved by refining our argument in Theorem \ref{thrm_sample_p_independent_truncation}, which we proceed to discuss in the next section.

%\yan{
%Should mention that truncation is not need if we do not want instance-dependent sample complexity. 
%}
%

\subsection{Improved Sample Complexity Upper Bound}\label{subsec_improved_sample_complexity}

In this section, we discuss some improvements to the obtained sample complexities in Section \ref{subsec_sample_ub}. 
Note that since $\cR(X+ c) = \cR(X) + c$, without loss of generality we can assume $0 \leq X \leq B$. 
Given this and the fact that  $\phi^*(y) \geq y$,   we have
\begin{align}\label{f_nonnegative}
f_L(\lambda, \mu; \omega) =  \lambda \tau + \mu + (\lambda \phi_L)^*(X(\omega) - \mu)
\geq  \lambda \tau + \mu + X(\omega) - \mu \geq 0,
\end{align}
for any $L \geq 0$ and $(\lambda, \mu) \in  [\underline{\lambda}, \frac{2B}{\tau}] \times [-B, B]$. 

We begin by noting that Theorem \ref{thrm_sample_p_independent_truncation} only uses the boundedness of $f_L$ over the domain of our interest $(\lambda, \mu) \in [\underline{\lambda}, \frac{2B}{\tau}] \times [-B, B]$.
On the other hand, it follows from \eqref{risk_via_emp_restrict} that at the optimal dual variable $(\lambda^*, \mu^*)$, we have $\EE_P \sbr{f_L(\lambda^*, \mu^*; \omega) }  \leq B + \underline{\lambda} \tau$. 
Given this and \eqref{f_nonnegative}, one can immediately control $\EE_P \sbr{f_L^2(\lambda^*, \mu^*; \omega)}$ by  $\cO(B M (\epsilon) )$ instead of $\cO(M^2(\epsilon))$.   
This suggests that one can potentially strengthen Theorem \ref{thrm_sample_p_independent_truncation} by a direct application of the Bernstein  inequality.

\begin{lemma}\label{lemma_two_side_bernstein}
For any random variable $Z$ over $(\Omega, \cF, P)$ with $0 \leq Z \leq M$. 
Let $P_n$ denote the empirical distribution constructed from $n$ independent samples following distribution $P$. 
For any $\delta \in (0,1)$, with probability at least $1-\delta$, we have
\begin{align}
{ \EE_{P_n} \sbr{Z}  - \EE_P \sbr{Z} } & \leq  4  \bigg({
\sqrt{\frac{  { \EE_P [{Z^2}]} \log(3/\delta)}{n}} + \frac{M \log(3/\delta)}{n} 
} \bigg),  \label{eq_bernstein_1} \\
{  \EE_P \sbr{Z} - \EE_{P_n} \sbr{Z}   } & \leq  4 \bigg({
\sqrt{\frac{  { \EE_{P_n} [{Z^2}]} \log(3/\delta)}{n}} + \frac{M \log(3/\delta)}{n} 
} \bigg). \label{eq_bernstein_2}
\end{align} 
\end{lemma}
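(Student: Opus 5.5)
We prove the two inequalities separately, each from Bernstein's inequality, and then combine them with a union bound. The constant $\log(3/\delta)$ leaves room to split the failure probability into three events, each of probability at most $\delta/3$.

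\textbf{Step 1: the bound \eqref{eq_bernstein_1}.} Apply the one-sided Bernstein inequality to the i.i.d. variables $Z(\omega_i)$. Since $0 \leq Z \leq M$, we have $|Z - \EE_P[Z]| \leq M$ and $\mathrm{Var}(Z) \leq \EE_P[Z^2]$. Hence, with probability at least $1-\delta/3$,
\begin{align*}
\EE_{P_n}[Z] - \EE_P[Z] \leq \sqrt{\frac{2\EE_P[Z^2]\log(3/\delta)}{n}} + \frac{2M\log(3/\delta)}{3n}.
\end{align*}
This is already \eqref{eq_bernstein_1}, with room to spare in the constant $4$. The same argument applied to $-Z$ gives the lower deviation with $\EE_P[Z^2]$ on the right-hand side, again with probability at least $1-\delta/3$:
\begin{align*}
\EE_P[Z] - \EE_{P_n}[Z] \leq \sqrt{\frac{2\EE_P[Z^2]\log(3/\delta)}{n}} + \frac{2M\log(3/\delta)}{3n}.
\end{align*}

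\textbf{Step 2: the main obstacle.} Inequality \eqref{eq_bernstein_2} uses the empirical second moment $\EE_{P_n}[Z^2]$ rather than $\EE_P[Z^2]$. So we must show that $\EE_P[Z^2]$ is bounded by a constant times $\EE_{P_n}[Z^2] + M\log(3/\delta)/n$, with high probability.

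\textbf{Step 3: controlling the second moment.} Apply Bernstein's inequality to $W = Z^2 \in [0, M^2]$, whose variance satisfies $\mathrm{Var}(W) \leq \EE[Z^4] \leq M^2 \EE_P[Z^2]$. With probability at least $1-\delta/3$, writing $\sigma^2 = \EE_P[Z^2]$, $\hat\sigma^2 = \EE_{P_n}[Z^2]$ and $\ell = \log(3/\delta)$,
\begin{align*}
\sigma^2 - \hat\sigma^2 \leq \sqrt{\frac{2M^2\sigma^2\ell}{n}} + \frac{2M^2\ell}{3n}.
\end{align*}
The right-hand side contains $\sigma = \sqrt{\sigma^2}$, so we treat this as a quadratic inequality in $\sigma$. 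Using $\sqrt{ab} \leq a/2 + b/2$ on the first term yields $\sigma^2 \leq 2\hat\sigma^2 + c M^2\ell/n$ for a small absolute constant $c$.

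\textbf{Step 4: substitution.} Plug this bound on $\sigma^2$ into the lower-deviation bound from Step 1. Then use $\sqrt{a+b} \leq \sqrt{a} + \sqrt{b}$:
\begin{align*}
\sqrt{\frac{2\sigma^2\ell}{n}} \leq 2\sqrt{\frac{\hat\sigma^2\ell}{n}} + \sqrt{2c}\,\frac{M\ell}{n}.
\end{align*}
Collecting terms, the constants stay below $4$, which gives \eqref{eq_bernstein_2}. Taking a union bound over the three events (upper deviation of $Z$, lower deviation of $Z$, and deviation of $Z^2$) shows that both \eqref{eq_bernstein_1} and \eqref{eq_bernstein_2} hold simultaneously with probability at least $1-\delta$.
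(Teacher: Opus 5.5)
Your proof is correct and follows the paper's route. You apply Bernstein to $Z$, then Bernstein to $Z^2$ using $\mathrm{Var}(Z^2)\le M^2\EE_P[Z^2]$, convert $\EE_P[Z^2]$ into $\EE_{P_n}[Z^2]$, substitute, and take a union bound over three events. The only difference is cosmetic: the paper solves the quadratic exactly, giving $\sqrt{\EE_P[Z^2]}\le\sqrt{\EE_{P_n}[Z^2]}+M(\sqrt{2}+1/\sqrt{3})\sqrt{\log(1/\delta)/n}$, whereas your AM--GM step gives $\EE_P[Z^2]\le 2\EE_{P_n}[Z^2]+\frac{10}{3}M^2\ell/n$, which still keeps both constants below $4$.
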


\begin{proof}
We start by noting that \eqref{eq_bernstein_1} follows directly from the standard Bernstein inequality. 
On the other hand, since $0 \leq Z \leq M$, applying the Bernstein inequality to $Z^2$ yields 
\begin{align*}
\EE_P \sbr{Z^2} - \EE_{P_n} \sbr{Z^2} \leq M \sqrt{\frac{2 \EE_P[Z^2] \log(1/\delta) }{n} }  + \frac{M^2 \log(1/\delta)}{3n}
\end{align*}
with probability $1-\delta$. 
Solving for $\sqrt{\EE_P [Z^2]}$ in the above relation, we have 
\begin{align}\label{berstein_second_moment}
\sqrt{\EE_P [Z^2]} \leq  \sqrt{\EE_{P_n} [Z^2]} +  M (\sqrt{2} + 1/\sqrt{3}) \sqrt{\frac{\log(1/\delta)}{n} } .
\end{align}
Now applying the Bernstein inequality to $Z$, we obtain 
\begin{align*}
\EE_P [Z] - \EE_{P_n} [Z] \leq \sqrt{\frac{2  \log(1/\delta)}{n}} \cdot \EE_P[Z^2]+ \frac{M \log(1/\delta)}{3n}.
\end{align*}
Then \eqref{eq_bernstein_2} follows after substituting \eqref{berstein_second_moment} into the above relation and applying the union bound. 
\end{proof}

With Lemma \ref{lemma_two_side_bernstein} in place, one can then improve Theorem \ref{thrm_sample_p_independent_truncation} as follows.

\begin{theorem}\label{thrm_sample_bernstein}
Suppose $\lim_{x \to \infty} \phi(x) / x = \infty$. 
For any $0 < \epsilon \leq B $, let 
$\underline{\lambda}(\epsilon), L(\epsilon)$, 
 $\overline{L}(\epsilon)$ and $M(\epsilon)$ be defined as in \eqref{eq_lip_m_constants}.
Then 
for any $\delta \in (0,1)$  
we have $\abs{\cR(X) - \cR_n(X)} \leq \epsilon$ with probability at least $1-\delta$
 if 
\begin{align*}
n \geq 
\frac{2048 B M(\epsilon)  \log (768 B^2 \overline{L}^2(\epsilon)/ (\epsilon^2 \tau \delta))}{\epsilon^2} 
+ 
\frac{32 M(\epsilon)  \log (768 B^2 \overline{L}^2(\epsilon)/ (\epsilon^2 \tau \delta))}{\epsilon} .
\end{align*}
\end{theorem}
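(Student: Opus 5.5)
The plan follows the structure of Theorem \ref{thrm_sample_p_independent_truncation}, replacing the uniform Hoeffding bound with the one-sided Bernstein bounds of Lemma \ref{lemma_two_side_bernstein}. These are applied only where variance is small, and the two one-sided deviations are controlled separately. As in the preceding discussion, we first reduce to $0 \le X \le B$. Then \eqref{f_nonnegative} gives $0 \le f_L(\lambda,\mu;\omega) \le M(\epsilon)$ on $[\underline{\lambda}, \frac{2B}{\tau}]\times[-B,B]$ by \eqref{dual_stoch_obj_bound}. Hence for any $(\lambda,\mu)$ in this box, $\EE[f_L^2] \le M(\epsilon)\,\EE[f_L]$, for both $P$ and $P_n$. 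The total error will be split as
\[
\cR - \cR_n = (\cR - \cR_L) + (\cR_L - \cR_{n,L}) + (\cR_{n,L} - \cR_n).
\]
The first term is at most $B\tau/g_\phi(L(\epsilon)) \le \epsilon/4$ by Lemma \ref{lemma_risk_via_truncation}. The same lemma applied to $P_n$ handles the third term, since the truncation bound is $P$-independent. The real work is the middle term.

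\emph{Upper deviation $\cR_{n,L}-\cR_L$.} By \eqref{risk_via_emp_restrict}, $\cR_{n,L} \le \EE_{P_n}[f_L(\lambda^*,\mu^*;\omega)]$ for the fixed population minimizer $(\lambda^*,\mu^*)$; no net is needed here. Applying \eqref{eq_bernstein_1} to $Z = f_L(\lambda^*,\mu^*;\omega)$ with $\EE_P[Z] \le \cR_L + \underline{\lambda}\tau \le 2B$ (using $\epsilon\le B$), we get $\EE_P[Z^2] \le 2BM$. This yields $\cR_{n,L} - \cR_L \le \underline{\lambda}\tau + 4\sqrt{2BM\log(3/\delta)/n} + 4M\log(3/\delta)/n$.

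\emph{Lower deviation $\cR_L - \cR_{n,L}$.} Here the empirical minimizer $(\hat\lambda^*,\hat\mu^*)$ is random, so a uniform statement is needed. I would take an $\varepsilon$-net $\cN_1\times\cN_2$ with $\varepsilon = \epsilon/(8\overline{L}(\epsilon))$, of cardinality at most about $4B^2/(\tau\varepsilon^2)$, and apply \eqref{eq_bernstein_2} at each net point with a union bound. The key point is that \eqref{eq_bernstein_2} uses the \emph{empirical} second moment. At the net point $(\hat\lambda_\varepsilon,\hat\mu_\varepsilon)$ nearest $(\hat\lambda^*,\hat\mu^*)$, Lemma \ref{lemma_lipschitz} gives $\EE_{P_n}[f_L] \le \cR_{n,L} + \underline{\lambda}\tau + \overline{L}\varepsilon \le 2B + \epsilon$, using $\cR_{n,L}\le B$. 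Hence $\EE_{P_n}[f_L^2] \le M(2B+\epsilon) \le 3BM$. Then
\[
\cR_L \le \EE_P[f_L(\hat\lambda_\varepsilon,\hat\mu_\varepsilon)] \le \EE_{P_n}[f_L(\hat\lambda_\varepsilon,\hat\mu_\varepsilon)] + 4\sqrt{3BM\log(3|\cN|/\delta)/n} + 4M\log(3|\cN|/\delta)/n,
\]
and the right side is at most $\cR_{n,L} + \underline{\lambda}\tau + \overline{L}\varepsilon + (\text{Bernstein terms})$. The first inequality holds because $\cR_L$ is a minimum over the box, which contains the net point.

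Finally, I would combine the two deviations with a union bound over $\delta/2$ each, substitute $\underline{\lambda}\tau = \epsilon/4$ and $\overline{L}\varepsilon = \epsilon/8$, and require each Bernstein term to be at most $\epsilon/16$. This gives the two stated terms $BM\log(\cdot)/\epsilon^2$ and $M\log(\cdot)/\epsilon$ with log argument of order $B^2\overline{L}^2/(\epsilon^2\tau\delta)$. The main obstacle I anticipate is exactly the lower deviation. A naive uniform Bernstein would need a variance bound at every net point, but the population mean $\EE_P[f_L]$ can be as large as $M$ away from the optimum. The fix is to use \eqref{eq_bernstein_2}, whose variance proxy is evaluated only at the (data-dependent) point near the empirical minimizer, where the empirical mean is certified to be $O(B)$. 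If bookkeeping of the constants ($2048$, $768$) fails, I would simply loosen them, since only the order matters.
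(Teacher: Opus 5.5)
Your proposal is correct and follows the paper's proof essentially step for step. You apply \eqref{eq_bernstein_1} at the fixed population minimizer $(\lambda^*,\mu^*)$, bounding the second moment by $M(B+\underline{\lambda}\tau)$. You apply \eqref{eq_bernstein_2} with a union bound over the $\varepsilon$-net, controlling the empirical second moment near $(\hat\lambda^*,\hat\mu^*)$ through $\EE_{P_n}[f_L]\le B+\underline{\lambda}\tau+\overline{L}\varepsilon$, and you handle truncation with Lemma \ref{lemma_risk_via_truncation}, all exactly as the paper does. The only differences are cosmetic: you make the $\delta/2$ split between the two one-sided events explicit and allow looser numerical constants.
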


\begin{proof}
We begin by noting that  
\begin{align}
%\label{pop_emp_min_loss_diff_2}
& \EE_{P_n} \big[{f_L (\hat{\lambda}^*, \hat{\mu}^*; \omega)}\big]
- 
\EE_{P} \sbr{f_L ({\lambda}^*, {\mu}^*; \omega)} \nonumber \\
\leq & 
\EE_{P_n} \sbr{f_L ({\lambda}^*, {\mu}^*; \omega)}
- 
\EE_{P} \sbr{f_L ({\lambda}^*, {\mu}^*; \omega)} 
\nonumber \\
\overset{(a)}{\leq} & 
 4 \bigg({
\sqrt{\frac{ M  {\EE_{P} \sbr{f_L ( \lambda^*,  \mu^*; \omega) }
} \log(12B^2/(\tau \varepsilon^2 \delta))}{n}} + \frac{ M \log(12B^2/(\tau \varepsilon^2 \delta))}{n} 
} \bigg)
\nonumber \\
\overset{(b)}{\leq} &   4 \rbr{
\sqrt{\frac{ M 
(B + \underline{\lambda} \tau  )
 \log(12B^2/(\tau \varepsilon^2 \delta))}{n}} + \frac{ M \log(12B^2/(\tau \varepsilon^2 \delta))}{n} 
}, \label{ineq_bernstein_at_pop_dual}
\end{align}
where $M = (3+2L)B$, $(a)$ follows from applying  \eqref{eq_bernstein_1} in  Lemma \ref{lemma_two_side_bernstein} while using  \eqref{dual_stoch_obj_bound} in Lemma \ref{lemma_lipschitz},
and $(b)$ follows from \eqref{risk_via_emp_restrict}.

For any $\varepsilon > 0$, let $\cN_1$ (resp. $\cN_2$) be an $\varepsilon$-net of $[\underline{\lambda}, \frac{2B}{\tau}]$ (resp. $[-B, B]$). 
In addition, for any $\lambda \in  [\underline{\lambda}, \frac{2B}{\tau}]$ and $\mu \in [-B, B]$, let 
$\lambda_\varepsilon$ and $\mu_\varepsilon$ be their corresponding closest points in $\cN_1$ and $\cN_2$ respectively. 
From \eqref{eq_bernstein_2} in Lemma \ref{lemma_two_side_bernstein} and \eqref{dual_stoch_obj_bound} in Lemma \ref{lemma_lipschitz}, we have that for any $\delta \in (0,1)$, with probability $1-\delta$,
\begin{align}\label{ineq_bernstein_eps_net}
{
\EE_{P} \sbr{f_L (\lambda_\varepsilon, \mu_\varepsilon; \omega)}  - \EE_{P_n} \sbr{f_L (\lambda_\varepsilon, \mu_\varepsilon; \omega)}
}
\leq 
 4 \rbr{
\sqrt{\frac{ M  {\EE_{P_n} \sbr{f_L (\lambda_\varepsilon, \mu_\varepsilon; \omega) }
} \log(12B^2/(\tau \varepsilon^2 \delta))}{n}} + \frac{ M \log(12B^2/(\tau \varepsilon^2 \delta))}{n} 
}
\end{align}
for any $ \lambda \in [\underline{\lambda}, \frac{2B}{\tau}], \mu \in [-B, B]$.
Consequently, we obtain
\begin{align}
&  \EE_{P} \sbr{f_L (\lambda^*, \mu^*; \omega)}
- 
\EE_{P_n} \big[{f_L (\hat{\lambda}^*, \hat{\mu}^*; \omega)} \big] \nonumber \\ 
 \leq &   \EE_{P} \big[{f_L (\hat \lambda^*, \hat \mu^*; \omega)} \big] 
-  \EE_{P_n} \big[{f_L (\hat{\lambda}^*, \hat{\mu}^*; \omega)} \big]  \nonumber  \\
 \overset{(c)}{\leq} &   \EE_{P} \big[{f_L (\hat{\lambda}^*_\varepsilon, \hat{\mu}^*_\varepsilon; \omega)}\big] 
-  \EE_{P_n} \big[{f_L (\hat{\lambda}^*_\varepsilon, \hat{\mu}^*_\varepsilon; \omega)} \big] + 2 \overline{L} \varepsilon \nonumber \\
 \overset{(d)}{\leq}  &  
 4 \bigg({
\sqrt{\frac{ M  {\EE_{P_n} \sbr{f_L (\hat \lambda^*_\varepsilon, \hat \mu^*_\varepsilon; \omega) }
} \log(12B^2/(\tau \varepsilon^2 \delta))}{n}} + \frac{ M \log(12B^2/(\tau \varepsilon^2 \delta))}{n} 
} \bigg) + 2 \overline{L} \varepsilon \nonumber  \\
 \overset{(e)}{\leq}  &   
  4 \rbr{
\sqrt{\frac{ M 
(B + \underline{\lambda} \tau + \overline{L} \varepsilon)
 \log(12B^2/(\tau \varepsilon^2 \delta))}{n}} + \frac{ M \log(12B^2/(\tau \varepsilon^2 \delta))}{n} 
}  + 2 \overline{L} \varepsilon, 
\label{pop_emp_min_loss_diff_1}
\end{align}
where 
$(c)$ follows from Lemma \ref{lemma_lipschitz} and the definition of $\overline{L} =  {L+1 +  \frac{2LB}{\underline{\lambda}} + \tau}$, $(d)$ follows from \eqref{ineq_bernstein_eps_net}, and $(e)$ follows from \eqref{risk_via_emp_restrict_truncate} and Lemma \ref{lemma_lipschitz}.

Now combining \eqref{risk_via_emp_restrict}, \eqref{risk_via_emp_restrict_truncate}, \eqref{ineq_bernstein_at_pop_dual}, and \eqref{pop_emp_min_loss_diff_1}, we obtain 
\begin{align*}
\cR_{n,L}(X) & \leq \cR_L(X) + \underline{\lambda} \tau + 4 \rbr{
\sqrt{\frac{ M 
(B + \underline{\lambda} \tau  )
 \log(12B^2/(\tau \varepsilon^2 \delta))}{n}} + \frac{ M \log(12B^2/(\tau \varepsilon^2 \delta))}{n} 
}, \\
\cR_L(X) & \leq \cR_{n,L}(X) + \underline{\lambda} \tau 
+  4 \rbr{
\sqrt{\frac{ M 
(B + \underline{\lambda} \tau + \overline{L} \varepsilon)
 \log(12B^2/(\tau \varepsilon^2 \delta))}{n}} + \frac{ M \log(12B^2/(\tau \varepsilon^2 \delta))}{n} 
}  + 2 \overline{L} \varepsilon,
\end{align*}
which together with \eqref{ineq_truncate_tail_via_growth} in Lemma \ref{lemma_risk_via_truncation}, implies
\begin{align*}
\abs{ \cR_{n} (X) - \cR(X)} 
\leq 
 \underline{\lambda} \tau 
+  4 \rbr{
\sqrt{\frac{ M 
(B + \underline{\lambda} \tau + \overline{L} \varepsilon)
 \log(12B^2/(\tau \varepsilon^2 \delta))}{n}} + \frac{ M \log(12B^2/(\tau \varepsilon^2 \delta))}{n} 
}  + 2 \overline{L} \varepsilon
+ \frac{B\tau}{g_\phi(L)}.
\end{align*}
The desired claim then follows by substituting the choice of 
$\underline{\lambda} = \underline{\lambda}(\epsilon)$,
$L = L(\epsilon)$, and $\varepsilon = \frac{\epsilon}{8 \overline{L}(\epsilon)}$ into the above relation, 
while noting that $\epsilon \leq B$.
\end{proof}

\begin{remark}
In view of Theorem \ref{thrm_sample_bernstein}, the number of samples to ensure $\abs{\cR_n(X) - \cR(X)} \leq \epsilon$ is at the order of 
$\cO( B M(\epsilon)  /\epsilon^2) = \cO( L(\epsilon) B^2  /\epsilon^2)$.
This improves the sample complexity of $\cO(  M^2(\epsilon)  /\epsilon^2)$ obtained in Theorem \ref{thrm_sample_p_independent_truncation}  by an order of $\Theta( L(\epsilon) ) = \Theta (g_\phi^{-1}(\frac{B\tau}{\epsilon}))$. 
As we will see in Section \ref{sec_application}, such an improvement already yields an optimal sample complexity for estimating conditional value-at-risk $\mathrm{CVaR}_\alpha(X)$,
where the corresponding divergence function is given by $\phi(x) = \mathbbm{1}_{[0, 1/\alpha]}(x)$  \cite{shapiro2017distributionally}.

On the other hand, in view of the hard example we have constructed in Lemma \ref{lemma_risk_lb_suplinear}, it can be seen that the obtained sample complexity remains suboptimal. 
Indeed, let $\zeta^*$ be the worst-case density associated with the example in Lemma \ref{lemma_risk_lb_suplinear}, our goal is to estimate 
$
\cR(X) = \EE_P \sbr{ X \zeta^*}.
$
It is clear that $X \zeta^*$ takes the value of $B g_\phi^{-1}\rbr{\frac{\tau B}{2\epsilon}}$ with probability $\frac{\epsilon / B}{g_\phi^{-1}( \tau B / (2\epsilon))}$, and zero otherwise. 
Hence we have 
$\EE_P \sbr{(X \zeta^*)^2} \leq  \epsilon B g_\phi^{-1}({\tau B}\rbr{2\epsilon})  = \cO(\epsilon M(\epsilon))$. 
Hence the Bernstein inequality would suggest that the sample complexity for estimating 
$\cR(X)$
would scale at the order of $ \cO(\epsilon M(\epsilon))$, instead of $\cO( B M(\epsilon))$ prescribed by Theorem~\ref{thrm_sample_bernstein}.
\end{remark}

The above discussion then suggests that our control of $\EE_P \big[{f_{L}^2 (\lambda^*,\mu^*; \omega)}\big]$ can be loose. 
At the same time, in view of our above discussion, for the hard example constructed in Lemma \ref{lemma_risk_lb_suplinear}, the worst-case density $\zeta^*$ would take a large value with a small probability that depends on the growth function $g_\phi$, and therefore one can readily control 
$\EE_P \sbr{(\zeta^*)^2}$.
We proceed to exploit this observation and provide a refined control on $\EE_P \big[{f_{L}^2 (\lambda^*,\mu^*; \omega)}\big]$.
To this end, we first make the following structural observation that $f_L(\lambda^*, \mu^*; \omega)$ can be described by the worst-case density $\zeta^*$ associated with a perturbed version of the truncated risk \eqref{phi_d_truncate}.

\begin{lemma}\label{lemma_rep_f_via_zeta}
For any $L \geq 1$, there exists density $\zeta^*$ with 
\begin{align}\label{rep_f_via_zeta}
f_L(\lambda^*, \mu^*; \omega) 
= \lambda^* \tau + \mu^* + (X(\omega) - \mu^*) \zeta^*(\omega) - (\lambda^* \phi_L)(\zeta^*(\omega)), ~~~ \text{$P$-almost surely},
\end{align}
and $\zeta^*$ is feasible to \eqref{phi_d_truncate}.
\end{lemma}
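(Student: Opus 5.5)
The plan is to build $\zeta^*$ pointwise from the maximizer that defines the conjugate $(\lambda^*\phi_L)^*$, and then to read off feasibility from the first-order optimality conditions of the restricted dual problem \eqref{eq_def_optimal_dual}.

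\emph{Step 1: pointwise maximizer.} For each $\omega$, write $y(\omega)=X(\omega)-\mu^*$. Since $\lambda^*\geq \underline{\lambda}>0$ and $\phi_L$ is lower semicontinuous with domain inside $[0,L]$, the supremum in $(\lambda^*\phi_L)^*(y)=\sup_{x}\{yx-\lambda^*\phi_L(x)\}$ is attained on the compact set $[0,L]$. By \eqref{subgrad_u_dual} (with $\phi_L$ in place of $\phi$), the set of maximizers is the interval $\partial(\lambda^*\phi_L)^*(y(\omega))=[a(\omega),b(\omega)]\subseteq[0,L]$. For any measurable selection $\xi(\omega)\in[a(\omega),b(\omega)]$, identity \eqref{rep_f_via_zeta} holds by definition of the conjugate. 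The endpoints $a,b$ are monotone functions of $y$, hence of $X$, so measurability is straightforward.

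\emph{Step 2: stationarity in $\mu$.} The function $F(\lambda,\mu)=\EE_P[f_L(\lambda,\mu;\omega)]$ is convex. By Lemma \ref{lemma_lipschitz} it is Lipschitz, so expectation and subdifferential interchange (Strassen/Ioffe–Tikhomirov). We have
\[
\partial_\mu F(\lambda^*,\mu^*)=\{1-\EE_P[\xi]:\xi \text{ a measurable selection of } [a,b]\},
\]
which is the interval $[1-\EE_P b,\,1-\EE_P a]$. From \eqref{fix_lambda_u_bound}, the constraint $\mu\in[-B,B]$ is not binding, because $\partial_\mu f\geq 0$ for $\mu\geq B$ and $\partial_\mu f\leq 0$ for $\mu\leq -B$. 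Hence $0\in\partial_\mu F(\lambda^*,\mu^*)$, which gives $\EE_P a\leq 1\leq \EE_P b$. Taking the convex combination $\zeta^*=\theta a+(1-\theta)b$ with a constant $\theta\in[0,1]$ then yields $\EE_P\zeta^*=1$ and $\zeta^*\geq 0$, so $\zeta^*\in\mathfrak{D}$. Because $\zeta^*(\omega)$ still lies in the maximizer set, \eqref{rep_f_via_zeta} is preserved.

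\emph{Step 3: the divergence constraint.} This is the main obstacle. The minimization is over $\lambda\in[\underline\lambda,2B/\tau]$, and $\lambda^*$ may sit at the lower boundary $\underline\lambda$, where stationarity fails. By \eqref{subgrad_lambda_dual}, $\partial_\lambda F=\tau-\EE_P[\phi_L(\xi)]$.
\begin{itemize}
\item If $\lambda^*$ is interior, then $0\in\partial_\lambda F$ gives $\EE_P[\phi_L(\xi)]=\tau$ for some selection $\xi$. I must reconcile this with the selection from Step 2. I would handle both coordinates jointly: take the full subdifferential of $F$ at $(\lambda^*,\mu^*)$, which is the set of vectors $(\tau-\EE\phi_L(\xi),\,1-\EE\xi)$. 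On each maximizer interval, $\lambda^*\phi_L(x)=y x-\text{const}$ is affine, so $\phi_L(\xi)$ is affine in $\xi$ pointwise. The map from selections to this vector is therefore linear, and its image is convex; here I use the convexity noted after \eqref{subgrad_lambda_dual}. Optimality of $(\lambda^*,\mu^*)$ over the box, with $\mu$ non-binding, requires some selection with $\EE\xi=1$ and $\tau-\EE\phi_L(\xi)\geq 0$ if $\lambda^*=\underline\lambda$, or $=0$ if $\lambda^*$ is interior. (The case $\lambda^*=2B/\tau$ can be excluded by Lemma \ref{lemma_bounded_dual}, or it gives the inequality with the opposite sign, which I would rule out as in that lemma.) In every admissible case this yields $\EE_P\phi_L(\zeta^*)\leq\tau$.
\item If $\lambda^*=\underline\lambda$ and the optimality condition only gives $\tau-\EE\phi_L(\xi)\geq 0$ for the chosen selection, this is exactly feasibility, and we are done.
\end{itemize}
The delicate points are justifying the interchange of $\partial$ and $\EE$, and the joint selection argument. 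If the latter is troublesome, I would instead derive the result via Lagrangian duality for the perturbed problem \eqref{phi_d_truncate}: under a Slater point ($\zeta\equiv1$, since $\phi_L(1)=0<\tau$), a primal optimizer exists and satisfies complementary slackness with the dual optimal $(\lambda^*,\mu^*)$. This directly yields \eqref{rep_f_via_zeta} almost surely, together with feasibility.
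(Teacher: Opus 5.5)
Your main route (Steps 1--3) is correct, and it differs genuinely from the paper's. The paper argues on the primal side. It observes that $\inf_{\lambda\geq\underline{\lambda},\,\mu}\{\lambda\tau+\mu+\EE_P[(\lambda\phi_L)^*(X-\mu)]\}$ is exactly the Lagrangian dual of the perturbed problem \eqref{eq_def_shifted_dro}, whose objective carries the extra term $\underline{\lambda}(\tau-\EE_P[\phi_L(\zeta)])$. Hence $(\lambda^*-\underline{\lambda},\mu^*)$ is dual optimal there. The paper then obtains a primal maximizer from weak compactness of the ambiguity set in $\cL_1(P)$, and strong duality from Slater for $L>1$ (with a direct check for $L=1$). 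It reads off \eqref{rep_f_via_zeta} and feasibility from the saddle-point property.

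You instead stay with the two-variable convex program. The joint Danskin formula gives $\partial f_L(\lambda^*,\mu^*;\omega)=\{(\tau-\phi_L(x),\,1-x): x\in[a(\omega),b(\omega)]\}$. The subdifferential--expectation interchange is legitimate here because $\lambda^*>0$, so $f_L(\cdot,\cdot;\omega)$ is finite, convex and uniformly Lipschitz near $(\lambda^*,\mu^*)$. It turns the optimality condition into a single measurable selection $\xi$ with $\EE_P\xi=1$ and $\EE_P\phi_L(\xi)\leq\tau$, and $\zeta^*=\xi$ works.

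What this buys you is a finite-dimensional argument with no primal attainment, no strong duality, and no special case for $L=1$. The cost is the interchange theorem and the \emph{joint} subdifferential of $(\lambda,y)\mapsto(\lambda\phi_L)^*(y)$. That joint subdifferential does not follow from the two partial formulas \eqref{subgrad_u_dual}--\eqref{subgrad_lambda_dual} alone. It does follow from Danskin applied to the supremum of the affine maps $(\lambda,y)\mapsto yx-\lambda\phi_L(x)$ over a compact sublevel set of $\phi_L$.

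There are two points to tighten.
\begin{itemize}
\item The final $\zeta^*$ must be the selection produced by the joint condition. The constant-$\theta$ combination from Step 2 need not satisfy the divergence constraint, so Step 2 is superseded rather than reused.
\item $\lambda^*=2B/\tau$ is not excluded by Lemma \ref{lemma_bounded_dual}. What you need is that $(\lambda^*,\mu^*)$ also minimizes $F$ over $[\underline{\lambda},\infty)\times\RR$. For $\mu$ this follows from \eqref{fix_lambda_u_bound}. For $\lambda$ it holds because $G(\lambda)=\inf_\mu F(\lambda,\mu)$ is convex with $G(0)\leq B<\lambda\tau-B\leq G(\lambda)$ for $\lambda>2B/\tau$, using $\underline{\lambda}\leq 2B/\tau$. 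Then only the bound $\lambda\geq\underline{\lambda}$ contributes a normal vector, and it gives exactly the sign you want.
\end{itemize}

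Your fallback does not work as written. Lagrangian duality for \eqref{phi_d_truncate} itself fails precisely when the bound $\underline{\lambda}$ is active. In that case $(\lambda^*,\mu^*)$ is not a dual optimum of \eqref{phi_d_truncate}, so there is no complementary slackness to invoke. The $\underline{\lambda}$-perturbation of the objective is the missing idea. In addition, Slater at $\zeta\equiv 1$ does not cover $L=1$.
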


\begin{remark}
It is quite natural to see that \eqref{rep_f_via_zeta} holds in the special case of $\underline{\lambda} = 0$ and $L > 1$. 
From \eqref{eq_phi_dro_f_form_restrict} in Lemma \ref{lemma_bounded_dual}, in this case $(\lambda^*, \mu^*)$ defined in \eqref{eq_def_optimal_dual}  is the optimal dual solution of \eqref{phi_d_truncate}.
Combining this with the fact that strong duality holds for \eqref{phi_d_truncate}, the optimal density $\zeta^*$  of \eqref{phi_d_truncate} must minimize $\cL(\zeta; \lambda^*, \mu^*)$,  from which \eqref{rep_f_via_zeta} holds with $\zeta^*(\omega) \in \Argmax_{z} (X(\omega) - \mu^*) z - \lambda^* \phi_L(z)$ $P$-almost surely. 
Lemma \ref{lemma_rep_f_via_zeta}  can hence be viewed as a generalization of this simple observation to the case of $\underline{\lambda} > 0$. 
In particular, we show that  $(\lambda^*, \mu^*)$ is the dual solution of a perturbed version of \eqref{phi_d_truncate} when $\underline{\lambda} > 0$.
\end{remark} 

\begin{proof}
We first proceed to show that $(\lambda^* - \underline{\lambda}, \mu^*)$ is the optimal dual solution for 
\begin{align}\label{eq_def_shifted_dro}
\sup_{\zeta \in \mathfrak{D}} \EE_P \sbr{X \zeta} + \underline{\lambda} \rbr{ \tau - \EE_P \sbr{\phi_L(\zeta)}}, ~~ \mathrm{s.t.} ~~ \int_\Omega \phi_L(\zeta(\omega)) dP(\omega) \leq \tau. 
\end{align}
Indeed, the Lagrangian of the above problem is given by 
\begin{align*}
\cL(\zeta; \lambda', \mu') =  (\underline{\lambda} + \lambda') \tau + \mu' + \EE_P [ (X - \mu')\zeta - (\underline{\lambda} + \lambda')\phi_L(\zeta) ] .
\end{align*}  
Hence the dual is  given by 
\begin{align}
\inf_{\lambda' \geq 0, \mu \in \RR} \sup_{\zeta \in  \cL_1(P)} \cL(\zeta; \lambda', \mu')  & \overset{(a)}{=} 
\inf_{\lambda' \geq 0, \mu \in \RR} (\underline{\lambda} + \lambda') \tau + \mu' + \EE_P [ \sup_z (X-\mu') z - (\underline{\lambda} + \lambda')\phi_L(z) ]  \nonumber \\
%& = \inf_{\lambda' \geq 0, \mu' \in \RR} (\underline{\lambda} + \lambda') \tau + \mu' + \EE_P \sbr{ ((\underline{\lambda} + \lambda')\phi_L)^* (X - \mu') } \nonumber \\
& = \inf_{\lambda \geq \underline{\lambda}, \mu \in \RR} \lambda \tau + \mu + \EE_P \sbr{ (\lambda \phi_L)^* (X - \mu) }, \label{def_shift_dual} 
\end{align}
where $(a)$ follows from the fact that  $\zeta \in \cL_1(P)$ and the latter is decomposable, and hence one can interchange taking the expectation and  supremum \cite{rockafellar1998variational}.
Following the same lines as in Lemma \ref{lemma_bounded_dual}, 
for any $\underline{\lambda} \leq 2B/\tau$, 
it can be readily shown that the above problem is further equivalent to 
\begin{align*}
 \inf_{\lambda \geq \underline{\lambda}, \mu \in \RR} \lambda \tau + \mu + \EE_P \sbr{ (\lambda \phi_L)^* (X - \mu) }
 = \inf_{\lambda \in [\underline{\lambda}, \frac{2B}{\tau}], \mu \in [-B, B]} \lambda \tau + \mu + \EE_P \sbr{ (\lambda\phi_L)^* (X - \mu) }.
\end{align*}
Combining the above relation and \eqref{def_shift_dual},
together with the definition of $(\lambda^*, \mu^*)$ in \eqref{eq_def_optimal_dual},
 we conclude that $(\lambda^*- \underline{\lambda}, \mu^*)$ is the dual solution for \eqref{eq_def_shifted_dro}. 
 
 Now since the ambiguity set in \eqref{eq_def_shifted_dro} is weakly compact in $\cL_1(P)$, 
 and 
 $\EE_P \sbr{X \zeta} + \underline{\lambda} \rbr{ \tau - \EE_P \sbr{\phi_L(\zeta)}}$ is lower-semicontinuous, 
  the supremum in \eqref{eq_def_shifted_dro} is attainable at some $\zeta^*$ feasible to \eqref{eq_def_shifted_dro}. 
%If $L > 1$, then from Slater condition, strong duality holds for \eqref{def_shift_dual}, 
If strong duality holds for \eqref{def_shift_dual}, 
it follows that  
 $(\zeta^*, \lambda^* - \underline{\lambda}, \mu^*)$ forms a saddle point of \eqref{def_shift_dual}, and hence 
 $\zeta^* \in \Argmax_{\zeta} \cL(\zeta; \lambda^* - \underline{\lambda}, \mu^*)$.
 Consequently, we obtain
 \begin{align*}
\lambda^* \tau + \mu^* + \EE_P [ (X-\mu^*) \zeta^* -\lambda^* \phi_L(\zeta^*) ]
= \lambda^* \tau + \mu^* + \EE_P \sbr{ (\lambda^* \phi_L)^* (X - \mu^*) }.
 \end{align*}
Combining the above observation with the definition of $(\lambda^* \phi_L)^*$, we must have 
\begin{align*}
 (\lambda^* \phi_L)^* (X(\omega) - \mu^*)  = (X(\omega) -\mu^*) \zeta^*(\omega) -\lambda^* \phi_L(\zeta^* (\omega)) 
\end{align*}
$P$-almost surely.
It remains to show that \eqref{def_shift_dual} satisfies strong duality for $L \geq 1$. For $L > 1$ this clearly follows from the Slater condition. 
On the other hand, if $L = 1$, then $\mathrm{OPT}\eqref{def_shift_dual} = \EE_P \sbr{X}  + \underline{\lambda} \tau$.
We proceed to show that the corresponding dual with $((\lambda')^*, (\mu')^*) = (0,0)$ attains the same value. 
Indeed, we have 
\begin{align*}
\sup_{\zeta \in \cL_1(P)} \cL(\zeta; 0,0) \overset{(b)}{=} \underline{\lambda} \tau + \EE_P \sbr{(\underline{\lambda} \phi_1)^*(X)} 
=  \underline{\lambda} \tau + \EE_P \big[{\sup_x Xx - \underline{\lambda} \phi_1(x)} \big] \overset{(c)}{=} \lambda \tau + \EE_P \sbr{X},
\end{align*}
where $(b)$ follows from \eqref{def_shift_dual}, and $(c)$ follows from the fact that $X \geq 0$ and $1$ is a minimizer of $\phi_1$. 
\end{proof}

It could worth noting that Lemma \ref{lemma_rep_f_via_zeta} holds for any distribution $P$ over $(\Omega, \cF)$, in particular, the empirical distribution $P_n$. 
In this case, \eqref{rep_f_via_zeta} holds with $(\lambda^*, \mu^*)$ replaced by $(\hat{\lambda}^*, \hat{\mu}^*)$. 
With Lemma \ref{lemma_rep_f_via_zeta} in place, we can now provide a refined characterization on $\EE_P \big[{f_{L}^2 (\lambda^*,\mu^*; \omega)}\big]$ and 
$\EE_{P_n} \big[{f_{L}^2 (\hat{\lambda}^*,\hat{\mu}^*; \omega)}\big]$.

\begin{lemma}\label{lemma_refined_variance}
For any $L \geq L(B)$, define\footnote{
We define convention $ \sup_{t \in (L(B), L] } {t}/{g_\phi (t)} = 0$ if $L = L(B)$.
Note that in this case \eqref{ineq_second_moment_worst_case_f} holds trivially. 
} 
\begin{align}\label{def_var_proxy}
V_L = B^2 \bigg({9 + 4 L(B) + 4 \tau \sup_{t \in (L(B), L] } \frac{t}{g_\phi (t)}} \bigg).
\end{align}
Then we have 
\begin{align}\label{ineq_refined_var}
 \max \cbr{\EE_P \sbr{ f_{L}^2 (\lambda^*,\mu^*; \omega) }, \EE_{P_n} \big[{ f_{L}^2 (\hat{\lambda}^*,\hat{\mu}^*; \omega)  \big]} 
}  \leq V_L.
\end{align}
\end{lemma}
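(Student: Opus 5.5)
We work with the representation from Lemma~\ref{lemma_rep_f_via_zeta}. There is a density $\zeta^*$, feasible to \eqref{phi_d_truncate}, such that $P$-almost surely
\[
f_L(\lambda^*,\mu^*;\omega)=\lambda^*\tau+\mu^*+(X-\mu^*)\zeta^*-\lambda^*\phi_L(\zeta^*).
\]
The same holds for $P_n$ with $(\hat\lambda^*,\hat\mu^*)$, so it suffices to treat $P$; the $P_n$ case is verbatim. Here $L(B)=g_\phi^{-1}(4\tau)$ is the value of \eqref{eq_lip_m_constants} at $\epsilon=B$. Recall $0\le X\le B$, $\mu^*\in[-B,B]$ and $\lambda^*\le 2B/\tau$. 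By \eqref{f_nonnegative}, $f_L\ge 0$.

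\textbf{Upper bound on $f_L$.} Since $\phi_L\ge 0$, we have the pointwise bound
\[
0\le f_L\le \lambda^*\tau+\mu^*+(X-\mu^*)\zeta^*\le 3B+2B\zeta^*,
\]
using $\lambda^*\tau\le 2B$, $\mu^*\le B$ and $|X-\mu^*|\le 2B$. Squaring and using $(a+b)^2\le 2a^2+2b^2$ (or a slightly sharper split, chosen to match the constant $9$) gives
\[
\EE_P[f_L^2]\lesssim 9B^2+4B^2\,\EE_P[(\zeta^*)^2].
\]
It is cleaner to split on the event $\{\zeta^*\le L(B)\}$. Write $f_L\le 3B+2B\zeta^*$, and on $\{\zeta^*\le L(B)\}$ bound one factor of $\zeta^*$ by $L(B)$, so that $\EE[\zeta^*\mathbbm{1}]\le 1$ yields the $4L(B)B^2$ term. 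An alternative, which I plan to verify, is $\EE[f_L^2]\le \|f_L\|_\infty\EE[f_L]$ combined with $\EE f_L\le B+\underline\lambda\tau$ restricted to that event. The goal is the contribution $B^2\bigl(9+4L(B)\bigr)$ from the bulk.

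\textbf{Tail term.} It remains to control $4B^2\,\EE_P[(\zeta^*)^2\mathbbm{1}_{\{\zeta^*>L(B)\}}]$. On this event, $\zeta^*\in(L(B),L]$ since $\phi_L=\infty$ beyond $L$. Write
\[
(\zeta^*)^2=\frac{\zeta^*}{g_\phi(\zeta^*)}\,\phi(\zeta^*),
\]
using $g_\phi(t)=\phi(t)/t$ from \eqref{g_phi_simplified}. This gives
\[
\EE_P\bigl[(\zeta^*)^2\mathbbm{1}\bigr]\le \sup_{t\in(L(B),L]}\frac{t}{g_\phi(t)}\;\EE_P[\phi_L(\zeta^*)]\le \tau\sup_{t\in(L(B),L]}\frac{t}{g_\phi(t)},
\]
by feasibility of $\zeta^*$ in \eqref{phi_d_truncate}. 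By Proposition~\ref{prop_inverse_g}, $g_\phi(t)>4\tau>0$ for $t>L(B)$, so the ratio is well defined. Summing the bulk and tail contributions gives $V_L$.

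\textbf{Main obstacle.} The hard step is getting the bulk constant to be exactly $9+4L(B)$ rather than something cruder. A naive squaring produces cross terms $12B^2\zeta^*$ whose expectation is at most $12B^2$, which overshoots $9$. I would instead bound $f_L^2\le f_L\cdot(3B+2B\zeta^*)$ and use $f_L\le 3B+2BL(B)$ on the bulk event. If the exact constants still do not close, I would exploit the $-\lambda^*\phi_L(\zeta^*)$ term, which was discarded above; equivalently, use $\lambda^*\tau+\mu^*\le$ a sharper quantity obtained from optimality of $\mu^*$, namely that $\EE_P[\zeta^*]=1$ forces $\mu^*\le B$ with $X-\mu^*\le B-\mu^*$. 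If needed, $\phi_L(\zeta^*)\ge$ its growth on the tail can also be used to absorb cross terms.
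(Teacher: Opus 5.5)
\textbf{Same skeleton as the paper, but the constant does not close.} Your outline is the paper's proof. It uses the representation \eqref{rep_f_via_zeta}, the pointwise bound $0\le f_L\le 3B+2B\zeta^*$, and the split of $\EE_P[(\zeta^*)^2]$ at $L(B)$, with the tail controlled by $t/g_\phi(t)$ and feasibility of $\zeta^*$. All of that is correct.

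The step you flag as the main obstacle is a real gap, and the paper's own proof does not resolve it either. The paper passes from $|f_L|\le 3B+2B\zeta^*$ directly to $\EE_P[f_L^2]\le \EE_P[9B^2+4B^2(\zeta^*)^2]$, which silently drops the cross term $12B^2\zeta^*$. Taken literally, that argument only yields $V_L+12B^2$. This is harmless for the order of Theorem~\ref{thrm_sample_complexity_improved_var}, but it is not the stated inequality.

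\textbf{Why your proposed repairs do not work as written.} Bounding $f_L^2\le (3B+2BL(B))(3B+2B\zeta^*)$ on $\{\zeta^*\le L(B)\}$ gives $5B(3B+2BL(B))=15B^2+10L(B)B^2$, which overshoots. The variant $\EE_P[f_L^2\mathbbm{1}_{\{\zeta^*\le L(B)\}}]\le (3B+2BL(B))(B+\underline{\lambda}\tau)$ has two problems:
\begin{itemize}
\item It fits under $9B^2+4L(B)B^2$ only when $\underline{\lambda}\tau$ is small, whereas the domain allows $\underline{\lambda}\tau$ up to $2B$.
\item It leaves the tail cross terms $9B^2P(\zeta^*>L(B))+12B^2\EE_P[\zeta^*\mathbbm{1}_{\{\zeta^*>L(B)\}}]$ unbudgeted, because your split charges the tail only $4B^2\EE_P[(\zeta^*)^2\mathbbm{1}_{\{\zeta^*>L(B)\}}]$.
\end{itemize}
Exploiting $-\lambda^*\phi_L(\zeta^*)$ cannot help in general, since $\phi$ may vanish on a long interval (e.g.\ CVaR).

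\textbf{The missing idea: the sign of $\mu^*$.} The coefficient $2B$ in front of $\zeta^*$ only arises when $\mu^*<0$, and in that case $\zeta^*$ is trivial. Recall $0\le X\le B$. Since $\phi_L\ge 0$, \eqref{rep_f_via_zeta} gives $0\le f_L\le \lambda^*\tau+\mu^*(1-\zeta^*)+X\zeta^*$ with $\lambda^*\tau\le 2B$.
\begin{itemize}
\item Case $\mu^*\ge 0$ and $\zeta^*\le 1$: the bound is at most $\lambda^*\tau+B\le 3B$.
\item Case $\mu^*\ge 0$ and $\zeta^*>1$: the middle term is nonpositive, so $f_L\le B(2+\zeta^*)$. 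Because $3z^2-4z+5>0$ for all $z$, we have $(2+z)^2\le 9+4z^2$.
\item Case $\mu^*<0$: then $X-\mu^*>0$. For $y>0$, every maximizer of $z\mapsto yz-\lambda^*\phi_L(z)$ is at least $1$, since any $z\in[0,1)$ gives value at most $yz<y$, the value at $z=1$. By \eqref{rep_f_via_zeta} and the definition \eqref{def_f_truncate}, $\zeta^*(\omega)$ attains $(\lambda^*\phi_L)^*(X(\omega)-\mu^*)$, so $\zeta^*\ge 1$ a.s. Together with $\EE_P[\zeta^*]=1$ this forces $\zeta^*=1$ a.s., whence $f_L=\lambda^*\tau+X\le 3B$.
\end{itemize}
In every case $f_L^2\le 9B^2+4B^2(\zeta^*)^2$ almost surely. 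Your second-moment bound $\EE_P[(\zeta^*)^2]\le L(B)+\tau\sup_{t\in(L(B),L]}t/g_\phi(t)$ then gives exactly $V_L$. The $P_n$ case is identical, using $(\hat{\lambda}^*,\hat{\mu}^*)$ and $\EE_{P_n}[\hat{\zeta}^*]=1$.
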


\begin{proof}
From \eqref{rep_f_via_zeta} in Lemma \ref{rep_f_via_zeta}, the definition of $(\lambda^*, \mu^*)$ in \eqref{eq_def_optimal_dual}, and \eqref{f_nonnegative}, we have  
\begin{align}\label{f_ub_zeta_dependent}
\abs{f_L(\lambda^*, \mu^*; \omega) }
\leq 3B + 2B  \zeta^*(\omega)
\end{align}
$P$-almost surely.
\begin{align*}
\EE \sbr{ (\zeta^*(\omega) )^2} 
&= \int_{\zeta^*(\omega) \leq L(B)} (\zeta^*(\omega) )^2   dP(\omega) + \int_{\zeta^*(\omega) > L(B)} (\zeta^*(\omega) )^2   dP(\omega)  \\
& \leq L(B) + \int_{ L(B) < \zeta^*(\omega) \leq L } \frac{\zeta^*(\omega)}{\phi(\zeta^*(\omega))} \zeta^*(\omega) \phi(\zeta^*(\omega)) dP(\omega)  \\
& \overset{(a)}{\leq} L(B) +  \int_{ L(B) < \zeta^*(\omega) \leq L } \frac{\zeta^*(\omega)}{g_\phi(\zeta^*(\omega))}  \phi_L(\zeta^*(\omega)) dP(\omega) \\
&  \overset{(b)}{\leq}  L(B) +   \tau    \sup_{t \in (L(B), L] } \frac{t}{g_\phi (t)},
\end{align*}
where $(a)$ follows from \eqref{g_phi_simplified} and the fact that $g_\phi(x) > 0$ if $x > L(B)$ given the definition of $L(B)$ and $g_\phi^{-1}$. 
Consequently, by combining the above observation with \eqref{f_ub_zeta_dependent}, we obtain 
\begin{align}\label{ineq_second_moment_worst_case_f}
\EE_P \sbr{ f_{L}^2 (\lambda^*,\mu^*; \omega) } 
% \leq 
%\EE_P \sbr{f_{L}^2(\lambda^*,\mu^*; \omega)}  
&  \leq \EE_P \sbr{ 9B^2 + 4B^2 (\zeta^*)^2 (\omega)}  \leq
B^2 \bigg({
9 + 4 L(B) + 4 \tau   \sup_{t \in (L(B), L] } \frac{t}{g_\phi (t)}
} \bigg).
\end{align}
Hence $\EE_P \sbr{ 9B^2 + 4B^2 (\zeta^*)^2 (\omega)} \leq V_L$. 
The proof for $\EE_{P_n} \big[{ f_{L}^2 (\hat{\lambda}^*,\hat{\mu}^*; \omega)}\big]  \leq V_L$ follows from the same lines as above, after noting that Lemma \ref{lemma_rep_f_via_zeta} also holds by replacing $P$ (resp. $(\lambda^*, \mu^*)$) with 
$P_n$ (resp. $(\hat{\lambda}^*, \hat{\mu}^*)$).  
\end{proof}

By  combining Lemma \ref{lemma_refined_variance} with Lemma \ref{lemma_two_side_bernstein},  we are now ready to establish an improved sample complexity for $\cR_n(X)$  compared to Theorem \ref{thrm_sample_bernstein}.

\begin{theorem}\label{thrm_sample_complexity_improved_var}
Suppose $\lim_{x \to \infty} \phi(x) / x = \infty$. 
For any $0 < \epsilon \leq B $, let 
$\underline{\lambda}(\epsilon), L(\epsilon)$, 
 $\overline{L}(\epsilon)$ and $M(\epsilon)$ be defined as in \eqref{eq_lip_m_constants}.
Then 
for any $\delta \in (0,1)$  
we have $\abs{\cR(X) - \cR_n(X)} \leq \epsilon$ with probability at least $1-\delta$
 if 
\begin{align*}
n \geq 
\frac{4096 (V_{L(\epsilon)} + B^2) \log (768 B^2 \overline{L}^2(\epsilon)/ (\epsilon^2 \tau \delta))}{\epsilon^2} 
+ 
\frac{64 M(\epsilon)  \log (768 B^2 \overline{L}^2(\epsilon)/ (\epsilon^2 \tau \delta))}{\epsilon} ,
\end{align*}
where $V_L$ is defined as in \eqref{def_var_proxy}.
\end{theorem}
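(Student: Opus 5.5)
The plan is to repeat the proof of Theorem \ref{thrm_sample_bernstein} line by line, changing only the variance proxy. There, the second moments $\EE_P[f_L^2(\lambda^*,\mu^*;\omega)]$ and $\EE_{P_n}[f_L^2(\hat\lambda^*_\varepsilon,\hat\mu^*_\varepsilon;\omega)]$ were bounded crudely by $M(B+\underline{\lambda}\tau+\overline{L}\varepsilon)$. We now replace them with $V_L$ from Lemma \ref{lemma_refined_variance}, taking $L=L(\epsilon)$.

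Throughout we assume, as in Section \ref{subsec_improved_sample_complexity}, that $0\le X\le B$, so that $0\le f_L\le M=(3+2L)B$ by \eqref{f_nonnegative} and \eqref{dual_stoch_obj_bound}. We also need $L(\epsilon)\ge L(B)$ for Lemma \ref{lemma_refined_variance} to apply. This holds because $\epsilon\le B$ and $g_\phi^{-1}$ is non-decreasing.

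\emph{Upper deviation.} As in \eqref{ineq_bernstein_at_pop_dual}, we have $\EE_{P_n}[f_L(\hat\lambda^*,\hat\mu^*)]-\EE_P[f_L(\lambda^*,\mu^*)]\le \EE_{P_n}[f_L(\lambda^*,\mu^*)]-\EE_P[f_L(\lambda^*,\mu^*)]$. Applying \eqref{eq_bernstein_1} to $Z=f_L(\lambda^*,\mu^*;\omega)$ and using $\EE_P[Z^2]\le V_L$ bounds this by $4(\sqrt{V_L\log(\cdot)/n}+M\log(\cdot)/n)$.

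\emph{Lower deviation.} We cannot apply \eqref{eq_bernstein_2} directly at $(\hat\lambda^*,\hat\mu^*)$, since that point depends on the data. We therefore use the $\varepsilon$-net with a union bound, as in \eqref{ineq_bernstein_eps_net}, to get a bound on $\EE_P[f_L(\hat\lambda^*_\varepsilon,\hat\mu^*_\varepsilon)]-\EE_{P_n}[f_L(\hat\lambda^*_\varepsilon,\hat\mu^*_\varepsilon)]$ in terms of $\sqrt{\EE_{P_n}[f_L^2(\hat\lambda^*_\varepsilon,\hat\mu^*_\varepsilon)]}$. Lemma \ref{lemma_refined_variance} controls $\EE_{P_n}[f_L^2]$ only at the exact empirical minimizer, so we must transfer to the net point. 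Since $f_L$ is $\overline{L}$-Lipschitz in $(\lambda,\mu)$ pointwise in $\omega$ by Lemma \ref{lemma_lipschitz}, we have
\begin{align*}
\sqrt{\EE_{P_n}[f_L^2(\hat\lambda^*_\varepsilon,\hat\mu^*_\varepsilon)]}\le \sqrt{\EE_{P_n}[f_L^2(\hat\lambda^*,\hat\mu^*)]}+2\overline{L}\varepsilon\le \sqrt{V_L}+2\overline{L}\varepsilon .
\end{align*}
With $\varepsilon=\epsilon/(8\overline{L})$ the extra term is at most $\epsilon/4\le B$, so it gets absorbed into $\sqrt{V_L+B^2}$ up to constants. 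This is where the $B^2$ in $V_{L(\epsilon)}+B^2$ comes from. Adding the $2\overline{L}\varepsilon$ discretization error gives the reverse inequality. I expect this transfer, together with ensuring that Lemma \ref{lemma_rep_f_via_zeta} genuinely applies to $P_n$ with $(\hat\lambda^*,\hat\mu^*)$, to be the main delicate point. Everything else is bookkeeping.

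\emph{Assembly.} Combine these with \eqref{risk_via_emp_restrict}, \eqref{risk_via_emp_restrict_truncate} and Lemma \ref{lemma_risk_via_truncation} to get
\begin{align*}
\abs{\cR_n(X)-\cR(X)}\le \underline{\lambda}\tau+4\Big(\sqrt{\tfrac{c(V_L+B^2)\log(\cdot)}{n}}+\tfrac{M\log(\cdot)}{n}\Big)+2\overline{L}\varepsilon+\tfrac{B\tau}{g_\phi(L)} .
\end{align*}
Now substitute $\underline{\lambda}=\epsilon/(4\tau)$, $L=L(\epsilon)$ and $\varepsilon=\epsilon/(8\overline{L}(\epsilon))$. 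By Proposition \ref{prop_inverse_g}, $B\tau/g_\phi(L)\le\epsilon/4$, and the two deterministic terms are each at most $\epsilon/4$. The stated lower bound on $n$ then makes the square-root term and the linear term each at most $\epsilon/8$. The union bound over the net has cardinality $\lesssim B^2/(\tau\varepsilon^2)$ and covers three Bernstein events, which accounts for the factor $768$ inside the logarithm.
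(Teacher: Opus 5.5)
Your plan matches the paper's proof step for step. You apply \eqref{eq_bernstein_1} at the deterministic point $(\lambda^*,\mu^*)$ with $\EE_P[f_L^2]\le V_L$. You use the $\varepsilon$-net with \eqref{eq_bernstein_2} and move the empirical second moment from the net point to $(\hat\lambda^*,\hat\mu^*)$ via the pointwise Lipschitz bound; the paper uses $(a+b)^2\le 2a^2+2b^2$ where you use Minkowski, which makes no difference. You absorb $\overline{L}^2\varepsilon^2\le B^2$ and assemble with \eqref{risk_via_emp_restrict}, \eqref{risk_via_emp_restrict_truncate} and Lemma \ref{lemma_risk_via_truncation}.

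One step, however, is justified incorrectly. Proposition \ref{prop_inverse_g} gives $g_\phi(L(\epsilon))=g_\phi(g_\phi^{-1}(4B\tau/\epsilon))\le 4B\tau/\epsilon$, i.e.\ $B\tau/g_\phi(L(\epsilon))\ge\epsilon/4$. That is the reverse of what you need. Your inequality does hold, with equality, when $\phi$ is finite and continuous around $L(\epsilon)$, because $g_\phi$ is then continuous there. It fails when $\phi$ jumps to $+\infty$ at a finite endpoint of its domain. For CVaR, $\phi=\mathbbm{1}_{[0,1/\alpha]}$, one gets $L(\epsilon)=1/\alpha$ and $g_\phi(L(\epsilon))=0$. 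Lemma \ref{lemma_risk_via_truncation} then supplies an error term of $+\infty$, even though the actual truncation error is zero. The paper's proof makes the same substitution silently, so you have not departed from it. Still, as written your assembly does not cover every superlinear $\phi$, and the paper later applies this theorem to CVaR.

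The repair is to avoid evaluating $g_\phi$ at $L$ and use the strict second half of Proposition \ref{prop_inverse_g} instead. Take any $\zeta$ feasible for \eqref{dro_phi} and set $L=L(\epsilon)$. Almost surely on $\{\zeta>L\}$ we have $\phi(\zeta)<\infty$ and $\phi(\zeta)/\zeta=g_\phi(\zeta)>4B\tau/\epsilon$. Hence
\[
\EE_P[\zeta-L]_+\le\int_{\{\zeta>L\}}\zeta\,dP\le\frac{\epsilon}{4B\tau}\int_{\Omega}\phi(\zeta)\,dP\le\frac{\epsilon}{4B},
\]
and \eqref{approx_risk_truncate} gives $\cR(X)-\epsilon/4\le\cR_{L(\epsilon)}(X)\le\cR(X)$. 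The same argument applies verbatim under $P_n$. With this in place of the citation, your three deterministic terms are each at most $\epsilon/4$, and the rest of your bookkeeping yields the stated constants.
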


\begin{proof}
Combining Lemma \ref{lemma_refined_variance} with \eqref{eq_bernstein_1} in Lemma \ref{lemma_two_side_bernstein}, while noting that \eqref{dual_stoch_obj_bound} still holds,
we obtain that for any $\delta \in (0,1)$, with probability at least $1-\delta$,
\begin{align}
 \EE_{P_n} \big[{f_L(\hat{\lambda}^*, \hat{\mu}^*; \omega) }\big] - \EE_{P} \sbr{f_L(\lambda^*, \mu^*; \omega) }  &
\leq  
 \EE_{P_n} \big[{f_L(\hat{\lambda}^*, \hat{\mu}^*; \omega) }\big] - \EE_{P} \sbr{f_L(\lambda^*, \mu^*; \omega) } 
\nonumber  \\
& \leq   4 
\bigg({
 \sqrt{\frac{ V_L \log(3/\delta)}{n}} + \frac{B (3 + 2L) \log(3/\delta)}{n} 
} \bigg). \label{ineq_row_diff_pop_improved}
\end{align}

For any $\varepsilon > 0$, let $\cN_1$ (resp. $\cN_2$) be an $\varepsilon$-net of $[\underline{\lambda}, \frac{2B}{\tau}]$ (resp. $[-B, B]$). 
In addition, for $\lambda \in  [\underline{\lambda}, \frac{2B}{\tau}]$ and $\mu \in [-B, B]$, we denote  
$\lambda_\varepsilon$ and $\mu_\varepsilon$ as their corresponding closest points in $\cN_1$ and $\cN_2$ respectively. 
Combining  \eqref{dual_stoch_obj_bound} and \eqref{eq_bernstein_2} in Lemma \ref{lemma_two_side_bernstein}, we have 
\begin{align*}
\EE_{P} \sbr{f_L({\lambda}_\varepsilon, {\mu}_\varepsilon; \omega) } -  \EE_{P_n} \sbr{f_L({\lambda}_\varepsilon, {\mu}_\varepsilon; \omega) }  
\leq 
4 \bigg({
\sqrt{\frac{  { \EE_{P_n} \sbr{f_L^2({\lambda}_\varepsilon, {\mu}_\varepsilon; \omega)}} \log(12 B^2/(\tau \varepsilon^2 \delta))}{n}} + \frac{(3 + 2L)B \log(12 B^2/(\tau \varepsilon^2 \delta)}{n} 
} \bigg)
\end{align*}
for any $\lambda \in  [\underline{\lambda}, \frac{2B}{\tau}]$ and $\mu \in [-B, B]$.
This in turn implies 
\begin{align}
& \EE_{P} \big[{f_L({\lambda}^*, {\mu}^*; \omega) }\big] -  \EE_{P_n} \big[{f_L(\hat{\lambda}^*, \hat{\mu}^*; \omega) } \big]\nonumber  \\
\leq & \EE_{P} \big[{f_L(\hat{\lambda}^*, \hat{\mu}^*; \omega) }\big] -  \EE_{P_n} \big[{f_L(\hat{\lambda}^*, \hat{\mu}^*; \omega) } \big]\nonumber  \\
 \overset{(a)}{\leq}   & 
\EE_{P} \big[ {f_L(\hat{\lambda}^*_\varepsilon, \hat{\mu}^*_\varepsilon; \omega) } \big] -  \EE_{P_n} \big[{f_L(\hat{\lambda}^*_\varepsilon, \hat{\mu}^*_\varepsilon; \omega) } \big] + 2 \overline{L} \varepsilon   \nonumber \\
\leq & 
4 \bigg({
\sqrt{\frac{  { \EE_{P_n} \sbr{f_L^2(\hat{\lambda}^*_\varepsilon, \hat{\mu}^*_\varepsilon; \omega)}} \log(12 B^2/(\tau \varepsilon^2 \delta))}{n}} + \frac{(3 + 2L)B \log(12 B^2/(\tau \varepsilon^2 \delta)}{n} 
} \bigg)
+ 2 \overline{L} \varepsilon   \nonumber  \\
\overset{(b)}{\leq} & 
8 \bigg({
\sqrt{\frac{  { \EE_{P_n} \sbr{f_L^2(\hat{\lambda}^*, \hat{\mu}^*; \omega) + \overline{L}^2 \varepsilon^2 }} \log(12 B^2/(\tau \varepsilon^2 \delta))}{n}} + \frac{(3 + 2L)B \log(12 B^2/(\tau \varepsilon^2 \delta)}{n} 
} \bigg)
+ 2 \overline{L} \varepsilon \nonumber \\
\overset{(c)}{\leq} & 
8 \bigg({
\sqrt{\frac{  {  \rbr{V_L + \overline{L}^2 \varepsilon^2 }} \log(12 B^2/(\tau \varepsilon^2 \delta))}{n}} + \frac{(3 + 2L)B \log(12 B^2/(\tau \varepsilon^2 \delta)}{n} 
} \bigg)
+ 2 \overline{L} \varepsilon, \label{ineq_row_diff_emp_improved}
\end{align}
where $(a)$ and $(b)$ follow from  Lemma \ref{lemma_lipschitz} and the definition of $\overline{L} =  {L+1 +  \frac{2LB}{\underline{\lambda}} + \tau}$,
and $(c)$ follows from Lemma \ref{lemma_refined_variance}.

Combining \eqref{risk_via_emp_restrict} and \eqref{risk_via_emp_restrict_truncate} with \eqref{ineq_row_diff_pop_improved} and \eqref{ineq_row_diff_emp_improved} above, we have that for any $\varepsilon > 0$ and $L \geq L(B)$, 
with probability at $1-\delta$,
\begin{align*}
\cR_{n, L}(X) & \leq \cR(X) + \underline{\lambda} \tau + 4 
\bigg({
B \sqrt{\frac{ V_L \log(3/\delta)}{n}} + \frac{B (3 + 2L) \log(3/\delta)}{n} 
} \bigg), \\ 
\cR_L(X) & \leq \cR_{n, L}(X) +  \underline{\lambda}\tau + 
8 \bigg({
\sqrt{\frac{  { \rbr{V_L + \overline{L}^2 \varepsilon^2 }} \log(12 B^2/(\tau \varepsilon^2 \delta))}{n}} + \frac{(3 + 2L)B \log(12 B^2/(\tau \varepsilon^2 \delta)}{n} 
} \bigg)
+ 2 \overline{L} \varepsilon, 
\end{align*}
where $V_L$ is defined as in \eqref{def_var_proxy}.
The above observation, together with Lemma \ref{lemma_risk_via_truncation}, then implies that 
\begin{align*}
\abs{\cR_n(X) - \cR(X)} 
\leq 
 \underline{\lambda}\tau + 
8 \bigg({
\sqrt{\frac{  { \rbr{V_L + \overline{L}^2 \varepsilon^2 }} \log(12 B^2/(\tau \varepsilon^2 \delta))}{n}} + \frac{(3 + 2L)B \log(12 B^2/(\tau \varepsilon^2 \delta)}{n} 
} \bigg)
+ 2 \overline{L} \varepsilon
+ \frac{B\tau}{g_\phi(L)}.
\end{align*}
The desired claim then follows by substituting the choice of 
$\underline{\lambda} = \underline{\lambda}(\epsilon)$,
$L = L(\epsilon)$, and $\varepsilon = \frac{\epsilon}{8 \overline{L}(\epsilon)}$ into the above relation, 
while noting that $\epsilon \leq B$.
\end{proof}

To compare the obtained sample complexity of Theorem \ref{thrm_sample_complexity_improved_var} and that of Theorem \ref{thrm_sample_bernstein}, we make the simple observation.  

\begin{corollary}\label{corr_sample_improved_var}
Suppose $\lim_{x \to \infty} \phi(x) / x = \infty$. 
For any $0 < \epsilon \leq B $, let 
$\underline{\lambda}(\epsilon), L(\epsilon)$, 
 $\overline{L}(\epsilon)$ and $M(\epsilon)$ be defined as in \eqref{eq_lip_m_constants}.
Then 
for any $\delta \in (0,1)$  
we have $\abs{\cR(X) - \cR_n(X)} \leq \epsilon$ with probability at least $1-\delta$
 if 
\begin{align*}
n \geq 
\frac{32768  \sup_{\epsilon' \in [\epsilon, B]}  M(\epsilon') \epsilon'   \log (768 B^2 \overline{L}^2(\epsilon)/ (\epsilon^2 \tau \delta))}{\epsilon^2} 
+ 
\frac{64 M(\epsilon)  \log (768 B^2 \overline{L}^2(\epsilon)/ (\epsilon^2 \tau \delta))}{\epsilon} ,
\end{align*}
\end{corollary}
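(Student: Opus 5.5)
The corollary will follow from Theorem \ref{thrm_sample_complexity_improved_var} once we show
\[
V_{L(\epsilon)} + B^2 \leq 8 \sup_{\epsilon' \in [\epsilon, B]} M(\epsilon')\epsilon'.
\]
Then $4096(V_{L(\epsilon)}+B^2) \leq 32768 \sup_{\epsilon'} M(\epsilon')\epsilon'$, and the second term in the sample size is unchanged. So the whole plan is to bound the variance proxy $V_L$ of \eqref{def_var_proxy} at $L = L(\epsilon)$ by a supremum of $M(\epsilon')\epsilon' = (3+2L(\epsilon'))B\epsilon'$.

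\textbf{Step 1: the terms $9B^2 + 4B^2 L(B) + B^2$.} Take $\epsilon' = B$. Then $M(B)B = (3+2L(B))B^2$. Since $L(B)\geq 1$, we get $10B^2 + 4B^2L(B) \leq 8(3+2L(B))B^2$ with room to spare. Hence this part is at most a constant times $\sup_{\epsilon'\in[\epsilon,B]} M(\epsilon')\epsilon'$.

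\textbf{Step 2: the term $4\tau B^2 \sup_{t \in (L(B), L(\epsilon)]} t/g_\phi(t)$.} This is the main step. Fix $t \in (L(B), L(\epsilon)]$ and reparametrize it by an accuracy level. Define $\epsilon_t = 4B\tau/g_\phi(t)$, so that $g_\phi(t) = 4B\tau/\epsilon_t$ and the quantity to bound is
\[
4\tau B^2\, \frac{t}{g_\phi(t)} = B\, t\, \epsilon_t .
\]
We need to check two things.
\begin{itemize}
\item $\epsilon_t \in [\epsilon, B]$. The relation $t > L(B) = g_\phi^{-1}(4\tau)$ gives $g_\phi(t) > 4\tau$ by Proposition \ref{prop_inverse_g}, hence $\epsilon_t < B$. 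The relation $t \leq L(\epsilon) = g_\phi^{-1}(4B\tau/\epsilon)$ together with monotonicity of $g_\phi$ gives $g_\phi(t) \leq g_\phi(g_\phi^{-1}(4B\tau/\epsilon)) \leq 4B\tau/\epsilon$, hence $\epsilon_t \geq \epsilon$.
\item $t \leq L(\epsilon_t)$. By definition $L(\epsilon_t) = \inf\{x \geq 1 : g_\phi(x) \geq g_\phi(t)\}$. If this infimum were strictly less than $t$, then by continuity and strict increase of $g_\phi$ where it is positive (Remark \ref{remark_monotone_growth}) we would have $g_\phi(x) < g_\phi(t)$ for all $x<t$ near $t$, a contradiction. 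So $L(\epsilon_t) = t$, or at least $\geq t$ via continuity.
\end{itemize}
Combining these, $B t \epsilon_t \leq B L(\epsilon_t)\epsilon_t \leq \tfrac12 M(\epsilon_t)\epsilon_t \leq \tfrac12 \sup_{\epsilon'\in[\epsilon,B]} M(\epsilon')\epsilon'$.

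The delicate point is the boundary behaviour of $g_\phi^{-1}$: flat regions where $g_\phi = 0$ (excluded since $t > L(B)$ forces $g_\phi(t) > 0$), and possible infinite values at the edge of $\mathrm{dom}\,\phi$. Strict monotonicity from Remark \ref{remark_monotone_growth} is exactly what is needed here.

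\textbf{Step 3: conclude.} Adding Steps 1 and 2 gives $V_{L(\epsilon)} + B^2 \leq c \sup_{\epsilon'} M(\epsilon')\epsilon'$ with $c \leq 8$. Substituting into Theorem \ref{thrm_sample_complexity_improved_var} yields the stated constant $32768 = 8 \cdot 4096$.
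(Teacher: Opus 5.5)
Your proposal is correct and follows the paper's proof essentially step for step: you reparametrize $t\in(L(B),L(\epsilon)]$ by $\epsilon_t=4B\tau/g_\phi(t)$, check $\epsilon_t\in[\epsilon,B]$ and $t=L(\epsilon_t)$ via Proposition \ref{prop_inverse_g} and the strict monotonicity in Remark \ref{remark_monotone_growth}, and absorb $B^2(10+4L(B))$ using $\epsilon'=B$ (the empty-interval case $L(\epsilon)=L(B)$ is covered by the footnote convention). Two small bookkeeping points: in Step 3 the Step 1 bound should be taken with constant at most $4$ (e.g.\ $10+4L(B)\le 4(3+2L(B))$), not $8$, so that the total $4+1/2$ stays below $8$; and in your second bullet the contradiction needs $g_\phi(x)<g_\phi(t)$ for all $x\in[1,t)$, which follows from monotonicity together with strict increase where $g_\phi>0$, rather than only for $x$ near $t$.
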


\begin{proof}
Let us first consider the case where $L(\epsilon) > L(B)$, and denote $\epsilon_t = \frac{4 B\tau}{g_\phi(t)}$ for $t \in (L(B), L(\epsilon)]$. 
For any $t > L(B)$, we have $g_\phi(t) > 4\tau$ from the definition of $L(B) $ and Proposition \ref{prop_inverse_g}.      
Combining this with Remark \ref{remark_monotone_growth}, we have $g(t) > 0$ for $t \in (L(B), L(\epsilon)]$ and is strictly increasing over this domain.
This in turn implies 
$t = g_\phi^{-1}(\frac{4 B\tau}{\epsilon_t})$ for $t \in (L(B), L(\epsilon)]$, and therefore 
\begin{align}\label{ineq_change_var_raw}
 \sup_{t \in (L(B), L(\epsilon)] } \frac{t}{g_\phi (t)}
 =  \sup_{t \in (L(B), L(\epsilon)] }  g_\phi^{-1} (\frac{4 B\tau}{\epsilon_t}) \frac{\epsilon_t}{4B\tau}.
\end{align}
In addition,  for any $t > L(B)$, 
we have $g_\phi(t) > 4 \tau$  and hence  $\epsilon_t = \frac{4B\tau}{g_\phi(t)} < B$.
On the other hand, when $t = L(\epsilon)$, we have $g_\phi(L(\epsilon)) \leq \frac{4B\tau}{\epsilon}$ following again from Proposition \ref{prop_inverse_g}, and subsequently 
$\epsilon_t = \frac{4B\tau}{g_\phi (L(\epsilon))} \geq \epsilon$. 
Combining the above two observations with \eqref{ineq_change_var_raw}, we obtain 
\begin{align*}
 \sup_{t \in (L(B), L(\epsilon)] } \frac{t}{g_\phi (t)}
\leq \sup_{\epsilon' \in [\epsilon, B]}  g_\phi^{-1} (\frac{4 B\tau}{\epsilon'}) \frac{\epsilon'}{4B\tau}
= \sup_{\epsilon' \in [\epsilon, B]}  L(\epsilon') \frac{\epsilon'}{4B\tau}.
\end{align*}
Hence from the definition of $V_L$ in \eqref{def_var_proxy}, we obtain 
\begin{align*}
V_{L(\epsilon)} + B^2  = B^2 \bigg(10 + 4 L(B) +   \sup_{\epsilon' \in [\epsilon, B]}  L(\epsilon') \frac{\epsilon'}{B} \bigg)
%\leq 20 \log\rbr{{L(\epsilon)}}  \sup_{\epsilon' \in [\epsilon, B]}  B L(\epsilon')\epsilon'
\leq 8   \sup_{\epsilon' \in [\epsilon, B]}  M(\epsilon') \epsilon',
\end{align*}
where the last inequality follows from the definition of $M(\epsilon)$. 
Now if $L(\epsilon) = L(B)$, then $V_{L(\epsilon)} = B^2 (10 + 4 L(B))$  in view of \eqref{def_var_proxy}, and the above relation still holds.
The proof is then concluded by invoking Theorem~\ref{thrm_sample_complexity_improved_var}.
\end{proof}

For any precision target $\epsilon > 0$, Corollary \ref{corr_sample_improved_var} suggests that it takes no more than 
$
 \tilde{\cO} \rbr{ 
\frac{  \sup_{\epsilon' \in [\epsilon, B]}  M(\epsilon') \epsilon'  }{\epsilon^2}  + \frac{M(\epsilon)}{\epsilon}
}
$
number of samples to estimate $\cR_n(X)$.  
Given the fact $M(\epsilon)$ is non-increasing in $\epsilon$, 
this clearly improves upon the sample complexity of 
$
\cO \rbr{ 
\frac{ B M(\epsilon)  }{\epsilon^2} + \frac{M(\epsilon)}{\epsilon}
} 
$
prescribed by Theorem \ref{thrm_sample_bernstein}.
It could be worth mentioning that the sample complexities we obtained in Theorem \ref{thrm_sample_p_independent_truncation}, \ref{thrm_sample_bernstein}, \ref{thrm_sample_complexity_improved_var} and Corollary \ref{corr_sample_improved_var} are all invariant to scaling of the random variable $X$.
In the ensuing Section \ref{sec_application}, we will proceed to instantiate the obtained sample complexities in Section \ref{sec_sublinear} and \ref{sec_suplinear},   and establish their optimality for commonly used $\phi$-divergences.

%!TEX root = ./sample_dro.tex

\newpage 

\section{Application to Common $\phi$-divergences}\label{sec_application}

We are now ready to instantiate the sample complexity bounds obtained in Section \ref{sec_sublinear} and \ref{sec_suplinear} for some commonly used $\phi$-divergences.
In particular, interested readers may refer to  \cite[Table 1]{bayraksan2015data} for a comprehensive collection of $\phi$-divergences.
We begin by showing that for a class of $\phi$-divergences, obtaining an $\epsilon$-accurate estimation of $\cR(X)$ can require an arbitrarily large number of samples. 

\begin{corollary}[Essential supremum]
For $\cR(X) = \esssup X$, obtaining an $\epsilon$-accurate estimation of $\cR(X)$ can require an arbitrarily large number of samples.
\end{corollary}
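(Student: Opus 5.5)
The plan is to realize the essential supremum as a $\phi$-divergence functional with sublinear growth and then invoke Theorem \ref{info_lb_sublinear}. Take $\phi(x) = 0$ for $x \geq 0$ and $\phi(x) = +\infty$ for $x<0$, shifted so that $\phi(1)=0$. This $\phi$ is convex, lower semicontinuous and nonnegative. With this choice the constraint $\int \phi(\zeta)\,dP \le \tau$ is vacuous for every $\tau\ge 0$, so $\mathfrak{M}_\tau(P) = \mathfrak{D}$.

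The first step is the standard identity $\sup_{\zeta\in\mathfrak{D}} \EE_P[X\zeta] = \esssup X$. The upper bound is immediate, since $\zeta\ge 0$ and $\int\zeta\,dP=1$. For the lower bound, take $\zeta = \mathbbm{1}_A/P(A)$ with $A=\{X>\esssup X-\eta\}$, which has positive $P$-measure by definition of the essential supremum, and let $\eta\downarrow 0$.

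The second step is to check the hypotheses of Lemma \ref{lemma_risk_lb}. Here $\phi(x)/x = 0$ for $x\ge 1$, so $\lim_{x\to\infty}\phi(x)/x=0<\infty$; thus $\phi$ has sublinear growth. Assumption \ref{assump_nontrivial_phi} holds with $\delta = 1/2$, since $\mathrm{dom}(\phi)=[0,\infty)$. Consequently Lemma \ref{lemma_risk_lb} and Theorem \ref{info_lb_sublinear} apply verbatim.

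It is cleaner, and gives an explicit constant, to compute directly on the two-point example. There $\cR^{P}(X)=1$ whenever $P(\omega_1)=p>0$, and $\cR^{P_0}(X)=0$ when $P_0(\omega_1)=0$. Le Cam's two-point argument, exactly as in the proof of Theorem \ref{info_lb_sublinear}, shows that any $\cF^{\otimes n}$-measurable estimator incurs error at least $1/2$ with probability at least $1/4$ under one of $P_0,P_1$ whenever $n\le 1/(2p)$. Since $p>0$ is arbitrary, the number of samples needed for any fixed accuracy $\epsilon<1/2$ cannot be bounded independently of $P$, and scaling $X$ by $2\epsilon$-level constants handles general $\epsilon$.

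The only mildly delicate point is making the statement about $\epsilon$ precise. Replacing $X$ by $cX$ with $c = 4\epsilon$ (take $B\ge 4\epsilon$) gives an error of at least $2\epsilon>\epsilon$ with probability at least $1/4$ for all $n\le 1/(2p)$, and letting $p\to 0$ makes this arbitrarily large. No other obstacle is expected, because the reduction to the earlier sublinear-growth theory is direct.
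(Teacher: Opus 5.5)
Your proposal is correct and takes the same route as the paper. The paper's proof is the one-line observation that $\esssup X$ is $\cR(X)$ with $\phi=\mathbbm{1}_{[0,\infty)}$ (the $0$/$+\infty$ indicator), followed by an appeal to Theorem \ref{info_lb_sublinear}; your checks of $\sup_{\zeta\in\mathfrak{D}}\EE_P[X\zeta]=\esssup X$, sublinear growth, and Assumption \ref{assump_nontrivial_phi}, together with the specialized two-point Le Cam computation (separation $1$, independent of $\tau$), supply details the paper leaves implicit.
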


\begin{proof}
Since $\cR(X) = \esssup X$ corresponds to choosing $\phi(x) = \mathrm{1}_{[0, \infty)}(x)$ in \eqref{dro_phi},
the desired claim follows from invoking Theorem \ref{info_lb_sublinear}.
\end{proof}

\begin{corollary}[Burg entropy]
For $\cR(X)$ corresponding to the Burg entropy, obtaining an $\epsilon$-accurate estimation of $\cR(X)$ can require an arbitrarily large number of samples.
\end{corollary}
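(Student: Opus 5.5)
The plan is to reduce the claim to Theorem \ref{info_lb_sublinear}, exactly as for the essential supremum. First I fix the Burg entropy divergence function: $\phi(x) = -\log x + x - 1$ for $x > 0$ and $\phi(x) = +\infty$ for $x \leq 0$. This corresponds to $\mathrm{KL}(P \Vert Q)$ with $\zeta = dQ/dP$.

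Next I verify the standing hypotheses. The function $\phi$ is convex and lower semicontinuous, satisfies $\phi(1) = 0$, is nonnegative (since $\log x \leq x - 1$), and equals $+\infty$ on the negative reals. Its effective domain is $(0,\infty)$, so $[1-\delta, 1+\delta] \subseteq \mathrm{int}(\mathrm{dom}(\phi))$ for any $\delta \in (0,1)$. This gives Assumption \ref{assump_nontrivial_phi}.

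The key step is sublinear growth. Here
\begin{align*}
\phi(x)/x = 1 - (1 + \log x)/x \to 1 < \infty
\end{align*}
as $x \to \infty$, so $\lim_{x\to\infty}\phi(x)/x < \infty$. The hypotheses of Lemma \ref{lemma_risk_lb} therefore hold. Concretely, one may take $G_\phi = 1$ and $L_\phi = 1$, since $\phi(x)/x \leq 1$ for $x \geq 1$.

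Theorem \ref{info_lb_sublinear} then applies with the constants $r_{\phi,\tau}, p_{\phi,\tau} > 0$ from Lemma \ref{lemma_risk_lb}. It gives the following for any $p \in (0, p_{\phi,\tau})$ and any estimator $\hat{\cR}_n(X)$. There is a nominal $P$ on the two-point space with $X \in \{0,1\}$ for which $|\hat{\cR}_n(X) - \cR(X)| \geq r_{\phi,\tau}/2$ with probability at least $1/4$ whenever $n \leq 1/(2p)$.

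To turn this into the stated conclusion, I take $\epsilon < r_{\phi,\tau}/2$. Given any sample size $n$, I choose $p \leq \min\{p_{\phi,\tau}/2,\, 1/(2n)\}$. No estimator then achieves $\epsilon$-accuracy with probability above $3/4$ under that $P$. Since $p$ can be taken arbitrarily small, the required number of samples is unbounded. If a general $B$ is desired, one simply rescales $X$ by $B$.

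I expect no real obstacle: the only substantive check is the growth limit, and it is immediate.
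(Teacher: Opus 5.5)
Your proposal is correct and follows the paper's proof, which simply invokes Theorem~\ref{info_lb_sublinear} using $\phi(x) = -\log x + x - 1$ on $(0,\infty)$. You also spell out the checks the paper leaves implicit: Assumption~\ref{assump_nontrivial_phi}, $\phi(x)/x \to 1$ with $G_\phi = L_\phi = 1$, and the choice $p \le \min\{p_{\phi,\tau}/2,\, 1/(2n)\}$. One small wording point is that the hard measure (one of $P_0, P_1$) may depend on the estimator, but this is exactly what the claim needs.
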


\begin{proof}
The desired claim follows from invoking Theorem~\ref{info_lb_sublinear}, and the fact that $\phi(x) = (-\log x + x - 1)\mathbbm{1}_{(0, \infty)}(x)$ for the Burg entropy.
\end{proof}

\begin{corollary}[Neyman $\chi^2$-divergence]
For $\cR(X)$ corresponding to the Neyman $\chi^2$-divergence, obtaining an $\epsilon$-accurate estimation of $\cR(X)$ can require an arbitrarily large number of samples.
\end{corollary}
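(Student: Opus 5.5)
The plan is to follow the Burg entropy and essential supremum corollaries and reduce the claim to Theorem \ref{info_lb_sublinear}. That requires two checks: the Neyman $\chi^2$ divergence function has sublinear growth, and Assumption \ref{assump_nontrivial_phi} holds. The Neyman $\chi^2$-divergence corresponds to
\begin{align*}
\phi(x) = \frac{(x-1)^2}{x} \text{ for } x > 0, \qquad \phi(x) = +\infty \text{ for } x \leq 0.
\end{align*}
Only the behavior of $\phi$ near $1$ and at infinity matters, so any other convention at $x = 0$ (e.g. $\phi(0)=+\infty$ by lower semicontinuity) does not affect the argument.

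First I will confirm that $\phi$ is an admissible divergence function. On $(0,\infty)$ we can write $\phi(x) = x - 2 + 1/x$, which is convex, nonnegative and lower semicontinuous, with $\phi(1) = 0$. Since $\mathrm{dom}(\phi) \supseteq (0,\infty)$, taking $\delta = 1/2$ gives $[1/2, 3/2] \subseteq \mathrm{int}(\mathrm{dom}(\phi))$, so Assumption \ref{assump_nontrivial_phi} holds.

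Next I will verify sublinear growth: $\phi(x)/x = 1 - 2/x + 1/x^2 \to 1 < \infty$ as $x \to \infty$. Thus $\phi$ satisfies the hypothesis of Lemma \ref{lemma_risk_lb}, which supplies constants $r_{\phi,\tau}, p_{\phi,\tau} > 0$.

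Finally I will invoke Theorem \ref{info_lb_sublinear}. For any $n$, choose $p \in (0, p_{\phi,\tau})$ with $p \leq 1/(2n)$. The theorem then gives a $P$ on the two-point space and the bounded $X$ of Lemma \ref{lemma_risk_lb} such that any estimator errs by at least $r_{\phi,\tau}/2$ with probability at least $1/4$. Hence, for a fixed accuracy $\epsilon < r_{\phi,\tau}/2$, no finite sample size works uniformly over $P$, which is the meaning of "arbitrarily large". There is no real obstacle here. The only care needed is to settle the precise form of the Neyman $\chi^2$ function, since some references write it as $(x-1)^2/x$ and others with a factor of $1/2$. The scaling does not change either the growth limit or the domain condition.
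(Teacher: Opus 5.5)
Your proposal is correct and follows the paper's own proof. The paper likewise writes the Neyman $\chi^2$ function as $\phi(x) = \frac{(x-1)^2}{x}\mathbbm{1}_{(0,\infty)}(x)$, notes implicitly that it has sublinear growth and satisfies Assumption~\ref{assump_nontrivial_phi}, and invokes Theorem~\ref{info_lb_sublinear}. One small clarification: $\phi(0)=+\infty$ is not just one convention among several; convexity and lower semicontinuity force it, since $x-2+1/x\to\infty$ as $x\downarrow 0$, and it is exactly what the paper's indicator $\mathbbm{1}_{(0,\infty)}$ encodes.
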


\begin{proof}
The desired claim follows from invoking Theorem~\ref{info_lb_sublinear}, and the fact that  
 $\phi(x) = {\frac{(x-1)^2}{x} \mathbbm{1}_{(0, \infty)}(x)}$ for the Neyman $\chi^2$-divergence.
\end{proof}

\begin{corollary}[Total variation]
For $\cR(X)$ corresponding to the total variation distance, obtaining an $\epsilon$-accurate estimation of $\cR(X)$ can require an arbitrarily large number of samples.
\end{corollary}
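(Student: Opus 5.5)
The plan is to reduce the claim to Theorem~\ref{info_lb_sublinear}, as in the preceding corollaries. It then suffices to check two properties of the total variation divergence function: sublinear growth, and Assumption~\ref{assump_nontrivial_phi}.

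First I write down the divergence function. In the standard convention (cf.\ \cite{bayraksan2015data}), total variation corresponds to
\[
\phi(x) = \abs{x-1} \text{ for } x \geq 0, \qquad \phi(x) = +\infty \text{ for } x < 0,
\]
possibly up to a positive constant factor $\frac12$. A constant factor only rescales the radius $\tau$, so it does not affect the argument. This $\phi$ is convex and lower semicontinuous, satisfies $\phi(1)=0$, and is finite on $[0,\infty)$.

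The two checks are then as follows.

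\begin{itemize}
\item \emph{Assumption~\ref{assump_nontrivial_phi}.} Since $\mathrm{dom}(\phi) = [0,\infty)$, we have $[1-\delta, 1+\delta] \subseteq (0,\infty) = \mathrm{int}(\mathrm{dom}(\phi))$ for $\delta = \frac12$.
\item \emph{Sublinear growth.} For $x \geq 1$, $\phi(x)/x = (x-1)/x \to 1 < \infty$. The hypothesis $\lim_{x\to\infty}\phi(x)/x<\infty$ of Lemma~\ref{lemma_risk_lb} therefore holds.
\end{itemize}

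With both conditions in hand, Lemma~\ref{lemma_risk_lb} gives constants $r_{\phi,\tau}, p_{\phi,\tau}>0$. Theorem~\ref{info_lb_sublinear} then says: for any $p\in(0,p_{\phi,\tau})$ and any estimator, there is a two-point nominal measure $P$ under which the error is at least $r_{\phi,\tau}/2$ with probability at least $1/4$ whenever $n\le 1/(2p)$.

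The conclusion follows by letting $p \to 0$. For any target $\epsilon < r_{\phi,\tau}/2$, the required sample size is then at least $1/(2p)$, which is unbounded. Hence no $P$-independent sample complexity exists, and the number of samples needed can be made arbitrarily large.

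There is no substantive obstacle here. The only point requiring care is to fix the convention for $\phi$, including the factor $\frac12$ and the value $+\infty$ on negative arguments, so that it matches the paper's standing requirements on $\phi$.
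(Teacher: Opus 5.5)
Your proposal is correct and matches the paper's proof, which likewise just invokes Theorem~\ref{info_lb_sublinear} with $\phi(x) = \abs{x-1}$ on $[0,\infty)$ and $+\infty$ for $x<0$. Your explicit checks of Assumption~\ref{assump_nontrivial_phi} and of sublinear growth, together with the remark that the factor $\frac{1}{2}$ only rescales $\tau$, fill in details the paper leaves implicit.
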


\begin{proof}
The desired claim follows from invoking Theorem~\ref{info_lb_sublinear}, and the fact that  $\phi(x) = \abs{x-1} \mathbbm{1}_{[0, \infty)}(x)$ for the total
variation distance.
\end{proof}

\begin{corollary}[Hellinger distance]
For $\cR(X)$ corresponding to the Hellinger distance, obtaining an $\epsilon$-accurate estimation of $\cR(X)$ can require an arbitrarily large number of samples.
\end{corollary}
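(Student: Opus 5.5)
The plan mirrors the preceding corollaries: write down the divergence function for the Hellinger distance, check that it satisfies the hypotheses of Theorem~\ref{info_lb_sublinear}, and then invoke that theorem. We take
\[
\phi(x) = (\sqrt{x} - 1)^2 \mathbbm{1}_{[0,\infty)}(x),
\]
understood to be $+\infty$ for $x<0$ as required by the standing conventions. A constant multiple of this function (e.g.\ $\frac{1}{2}(\sqrt{x}-1)^2$) only rescales $\tau$ and does not affect the argument.

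We then verify the standing requirements on a divergence function.
\begin{itemize}
\item $\phi$ is nonnegative and $\phi(1)=0$.
\item $\phi$ is convex on $[0,\infty)$, since $\phi(x) = x - 2\sqrt{x} + 1$ and $-\sqrt{x}$ is convex.
\item $\phi$ is lower semicontinuous, because it is continuous on $[0,\infty)$ with value $\phi(0)=1$, and the jump to $+\infty$ on $(-\infty,0)$ preserves lower semicontinuity.
\item Assumption~\ref{assump_nontrivial_phi} holds with, for instance, $\delta = 1/2$, since $\mathrm{dom}(\phi) = [0,\infty)$ contains $[1/2, 3/2]$ in its interior.
\end{itemize}

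The only condition with any content is sublinear growth. We compute
\[
\frac{\phi(x)}{x} = 1 - \frac{2}{\sqrt{x}} + \frac{1}{x} \to 1 < \infty,
\]
so the condition $\lim_{x\to\infty}\phi(x)/x<\infty$ required by Lemma~\ref{lemma_risk_lb} holds. With all hypotheses in place, Theorem~\ref{info_lb_sublinear} gives the following: for every $p \in (0, p_{\phi,\tau})$ there is a two-point nominal measure $P$ under which any estimator incurs error at least $r_{\phi,\tau}/2$ with probability at least $1/4$ whenever $n \le 1/(2p)$.

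Letting $p \to 0$ makes the required number of samples arbitrarily large for any target accuracy $\epsilon < r_{\phi,\tau}/2$, which proves the claim. There is no real obstacle in this argument. The only point needing care is to use the convention that $\phi=+\infty$ on the negative axis, so that $\phi$ is a valid divergence function in the paper's sense.
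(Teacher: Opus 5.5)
Your proposal is correct and follows the paper's proof. The paper likewise identifies $\phi(x) = (\sqrt{x}-1)^2$ on $[0,\infty)$ (and $+\infty$ otherwise), which grows sublinearly since $\phi(x)/x \to 1$, and invokes Theorem~\ref{info_lb_sublinear}. Your explicit checks of convexity, lower semicontinuity, and Assumption~\ref{assump_nontrivial_phi} simply fill in details the paper leaves implicit.
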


\begin{proof}
The desired claim follows from invoking Theorem~\ref{info_lb_sublinear}, and the fact  that 
 $\phi(x) = (\sqrt{x} - 1)^2 \mathbbm{1}_{[0, \infty)}(x)$ for the Hellinger distance.
\end{proof}

\begin{remark}\label{remark_sublinear_generalized_phi}
It is important to note here that the statistical intractability for $\phi$-divergences with sublinear growth, in particular,  total variation distance and the Hellinger distance, crucially hinge upon the absolute continuity requirement of $Q \ll P$ for every $Q \in \mathfrak{M}_\tau (P)$ in \eqref{dro_phi}.
To illustrate this, suppose $\Omega$ is finite, then $\mathfrak{M}_\tau (P) = \cbr{Q \ll P: \frac{1}{2} \sum_{\omega \in \Omega} \abs{Q(\omega) - P(\omega)}  \leq \tau}$ for the total variation distance,
and 
$\mathfrak{M}_\tau (P) = \big\{Q \ll P: \frac{1}{2} \sum_{\omega \in \Omega} (\sqrt{Q(\omega) } - \sqrt{P(\omega)})^2  \leq \tau \big\}$.
In many applications, on the other hand, both distances are defined without the requirement of $Q \ll P$. 
Indeed, for divergence functions $\phi$ with sublinear growth, one can consider the generalized $\phi$-divergence first introduced in \cite{csiszar1963informationstheoretische,ali1966general}, 
which recovers the familiar form of total variance and Hellinger distance without absolute continuity requirement. 
It is worth noting that the dual of \eqref{dro_phi} with the generalized $\phi$-divergence takes the same form as \eqref{eq_phi_dro_f_form}, albeit with an additional constraint \cite{kuhn2025distributionally}. 
Nevertheless,  by adopting the same technique developed in Section \ref{sec_suplinear}, one can  develop sample complexity results for divergence functions with even sublinear growth.\footnote{
To the best of our knowledge,  sample complexities have been developed only  for total variation distance \cite{xu2023improved,yang2022toward,shi2023curious} among all $\phi$-divergences with sublinear growth.
} 
In particular, in this case the sample complexity would not depend on the underlying nominal measure $P$.
This observation hence highlights the role of absolute continuity in impacting the statistical tractability of estimating the risk functional $\cR(X)$.
\end{remark}

We next proceed to establish sample complexity upper bounds for divergences with superlinear growth,  and demonstrate their optimality by comparing with the lower bound in \eqref{eq_sample_lb_suplinear_max}.

\begin{corollary}[Conditional value-at-risk]\label{corr_cvar}
Let $\cR(X) = \mathrm{CVaR}_\alpha (X)$. 
For any $\delta \in (0,1)$, the number of samples $n$ to ensure $\abs{\cR(X) - \cR_n(X)} \leq \epsilon$ with probability $1-\delta$ can be bounded by 
\begin{align}\label{ineq_cvar_sample}
\tilde{\cO} \rbr{
\frac{ B^2 }{ \alpha \epsilon^2} 
+ 
\frac{B}{\alpha \epsilon} 
}.
\end{align}
\end{corollary}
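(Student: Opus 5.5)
The plan is to specialize Corollary \ref{corr_sample_improved_var} to the CVaR divergence $\phi(x) = \mathbbm{1}_{[0,1/\alpha]}(x)$ from \cite{shapiro2017distributionally}. The key step is computing the growth function $g_\phi$ and its inverse. From \eqref{g_phi_simplified}, $g_\phi(x) = \phi(x)/x = 0$ for $x \in [1, 1/\alpha]$ and $g_\phi(x) = +\infty$ for $x > 1/\alpha$. Hence $\lim_{x\to\infty}\phi(x)/x = \infty$, so the superlinear framework applies; Assumption \ref{assump_nontrivial_phi} holds for $\alpha<1$.

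By \eqref{inverse_g_def}, for every $y>0$ we get $g_\phi^{-1}(y) = \inf\{x \geq 1: g_\phi(x) \geq y\} = 1/\alpha$: any $x>1/\alpha$ qualifies and none in $[1,1/\alpha]$ does. Consequently, in \eqref{eq_lip_m_constants}, $L(\epsilon') = 1/\alpha$ for every $\epsilon'>0$, and $M(\epsilon') = (3+2/\alpha)B = \cO(B/\alpha)$ uniformly in $\epsilon'$. The truncation in Lemma \ref{lemma_risk_via_truncation} is then exact, $B\tau/g_\phi(L)=0$, which is consistent since $\phi_L=\phi$.

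Substituting into Corollary \ref{corr_sample_improved_var}, the leading term becomes
$\sup_{\epsilon'\in[\epsilon,B]} M(\epsilon')\epsilon'/\epsilon^2 = \cO(B^2/(\alpha\epsilon^2))$, with the supremum attained at $\epsilon'=B$. The second term is $M(\epsilon)/\epsilon = \cO(B/(\alpha\epsilon))$. The logarithmic factor involves $\overline L(\epsilon) = 1/\alpha + 1 + 8\tau B/(\alpha\epsilon) + \tau$, so it is polylogarithmic in $B,1/\epsilon,1/\alpha,\tau,1/\delta$ and is absorbed into $\tilde{\cO}$. This gives \eqref{ineq_cvar_sample} for $\epsilon\le B$. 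For $\epsilon>B$ the claim is trivial, because $|\cR(X)-\cR_n(X)|\le 2B$ after the centering $0\le X\le B$.

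The main obstacle is handling the extended-valued $g_\phi$ carefully. Proposition \ref{prop_inverse_g} and the change of variables in the proof of Corollary \ref{corr_sample_improved_var} assumed strict monotonicity where $g_\phi\in(0,\infty)$, and that set is empty here. In that regime $L(\epsilon)=L(B)$, which is exactly the case the corollary's proof handles separately. It gives $V_{L(\epsilon)} = B^2(9+4/\alpha)$, and $V_{L(\epsilon)}+B^2$ is directly $\cO(B^2/\alpha)$. I would therefore invoke Theorem \ref{thrm_sample_complexity_improved_var} directly with this value of $V_L$, which sidesteps any subtlety in the supremum over $t$. The radius $\tau$ entering $\overline L$ is a fixed constant tied to $\alpha$ (any $\tau$ works, since the set $\mathfrak M_\tau$ is the same for all $\tau\ge0$), so choosing, say, $\tau=1$ keeps the logarithm clean.
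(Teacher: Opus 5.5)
Your proposal is correct and is essentially the paper's argument. You compute $g_\phi^{-1}\equiv 1/\alpha$, which gives $L(\epsilon')=L(B)=1/\alpha$, $M(\epsilon')=(3+2/\alpha)B$ and $V_{L(\epsilon)}+B^2=\cO(B^2/\alpha)$, and plug these into Corollary~\ref{corr_sample_improved_var} (equivalently Theorem~\ref{thrm_sample_complexity_improved_var}), absorbing the logarithm into $\tilde{\cO}$.

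One small correction: at $L=1/\alpha$ you have $g_\phi(L)=\alpha\,\phi(1/\alpha)=0$, so $B\tau/g_\phi(L)$ is not $0$, and the bound of Lemma~\ref{lemma_risk_via_truncation} is vacuous there. The truncation error vanishes instead for the reason you also give, namely $\phi_L=\phi$ (hence $\cR_L=\cR$ and $\cR_{n,L}=\cR_n$). That fact is what must replace the last step of Theorem~\ref{thrm_sample_complexity_improved_var}, and it is what the paper's ``simple verifications'' leave implicit.
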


\begin{proof}
The conditional value-at-risk $\cR(X) = \mathrm{CVaR}_\alpha(X)$ corresponds to $\phi(x) = \mathbbm{1}_{[0, 1/\alpha]}(x)$. 
In view of Remark \ref{remark_monotone_growth}, we have 
$g_\phi(x) = \frac{\phi(x) }{x} = \mathbbm{1}_{[0, 1/\alpha]}(x)$ for $x \geq 1$ and subsequently $g_\phi^{-1}(y) = \frac{1}{\alpha}$ for any $y > 0$. 
The desired claim then follows from  Corollary \ref{corr_sample_improved_var} with simple verifications. 
\end{proof}

The reported upper bound in Corollary \ref{corr_cvar} can also be obtained based on the concentration of $\cR_n(X)$ for conditional value-at-risk \cite{wang2010deviation}.
In contrast to  the approach tailored for the conditional value-at-risk in \cite{wang2010deviation}, \eqref{ineq_cvar_sample} follows directly from our established sample complexity applicable to general $\phi$-divergences. 
From \eqref{eq_sample_lb_suplinear_max}, the sample complexity lower bound is given by 
$
\Omega \big( {
 \frac{B^2}{\epsilon^2} + \frac{ B }{  \alpha \epsilon  }
} \big)
$,
which differs from the obtained upper bound in Corollary \ref{corr_cvar} by a factor of ${1}/{\alpha}$. 
It is not difficult to refine our argument in Theorem \ref{thrm_sample_quadratic_lb_suplinear} for conditional value-at-risk and obtain a sample complexity lower bound of 
$
\Omega( \frac{B^2}{\alpha \epsilon^2})$.
To see this, note that for conditional value-at-risk, we have $\cR(p) = B \min \cbr{\frac{p}{\alpha}, 1}$, where $\cR(p)$ is defined in the proof of 
Theorem \ref{thrm_sample_quadratic_lb_suplinear}.
Consequently one can readily find a close pair $(p', p)$ with $\cR(p) = B p /\alpha$ and $\cR(p') = B p'/\alpha$. 
Then the lower bound of $\Omega( \frac{B^2}{\alpha \epsilon^2})$ follows from similar lines as in Theorem \ref{thrm_sample_quadratic_lb_suplinear}.
Combining this observation with Corollary \ref{corr_cvar}, the obtained sample complexity for conditional value-at-risk is indeed~optimal.
We

\begin{corollary}[Kullback-Leibler divergence]\label{corr_kl}
Let $\cR(X) = \sup \cbr{\EE_Q \sbr{X}: \mathrm{KL}(Q \Vert P) \leq \tau}$. 
For any $\delta \in (0,1)$, the number of samples $n$ to ensure $\abs{\cR(X) - \cR_n(X)} \leq \epsilon$ with probability $1-\delta$ can be bounded by 
\begin{align*}
\tilde{\cO} \rbr{
\frac{B^2 \exp(4\tau B/\epsilon)}{\epsilon^2}
}.
\end{align*}
\end{corollary}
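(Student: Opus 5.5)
The plan is to instantiate Corollary \ref{corr_sample_improved_var} with the KL divergence function $\phi(x) = x\log x - x + 1$ (with $\phi(x)=+\infty$ for $x<0$, $\phi(0)=1$). This $\phi$ is convex, lower semicontinuous, satisfies $\phi(1)=0$, and grows superlinearly because $\phi(x)/x = \log x - 1 + 1/x \to \infty$. Assumption \ref{assump_nontrivial_phi} holds trivially. Hence Corollary \ref{corr_sample_improved_var} applies, and what remains is to bound $L(\epsilon)$, $M(\epsilon)$, $\sup_{\epsilon'\in[\epsilon,B]} M(\epsilon')\epsilon'$, and the logarithmic factor.

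\textbf{Bounding $L(\epsilon)$.} By \eqref{g_phi_simplified}, $g_\phi(x) = \log x - 1 + 1/x$ for $x\ge 1$, and this function is increasing. Since $1/x \geq 0$, we have $g_\phi(x) \geq \log x - 1$. Therefore $g_\phi(x) \ge y$ as soon as $x \ge e^{y+1}$, which gives $g_\phi^{-1}(y) \le e^{y+1}$. With $L(\epsilon) = g_\phi^{-1}(4B\tau/\epsilon)$ this yields $L(\epsilon)\le e\exp(4B\tau/\epsilon)$. Consequently $M(\epsilon) = (3+2L(\epsilon))B = \cO(B\exp(4\tau B/\epsilon))$.

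\textbf{Bounding the remaining quantities.} For the variance term I will simply use the crude bound $\sup_{\epsilon'\in[\epsilon,B]} M(\epsilon')\epsilon' \le B\, M(\epsilon)$, which holds because $M$ is non-increasing in $\epsilon$ and $\epsilon'\le B$. This makes the first term of Corollary \ref{corr_sample_improved_var} $\cO(B^2 \exp(4\tau B/\epsilon)/\epsilon^2)$. The second term is $M(\epsilon)/\epsilon = \cO(B\exp(4\tau B/\epsilon)/\epsilon)$, which is dominated by the first because $\epsilon\le B$. For the logarithm, $\overline{L}(\epsilon) = L+1+8\tau LB/\epsilon+\tau$ is at most polynomial in $B/\epsilon$, $\tau$, and $\exp(4\tau B/\epsilon)$. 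Hence $\log(768B^2\overline L^2/(\epsilon^2\tau\delta)) = \cO(\tau B/\epsilon + \log(B/(\epsilon\tau\delta)))$, and this is absorbed into the $\tilde{\cO}$.

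\textbf{Main obstacle.} The only delicate point is this logarithmic factor. Strictly speaking it contributes a factor of order $\tau B/\epsilon$, which is polynomial rather than logarithmic in $1/\epsilon$. I would handle it by noting that $\tau B/\epsilon \le \exp(c\tau B/\epsilon)$ for any fixed $c>0$, which changes the constant in the exponent slightly. Alternatively, I would interpret $\tilde{\cO}$ as hiding factors of $\log L(\epsilon)$, as is implicit in the paper's notation. I would also verify that the restriction $\epsilon\le B$ in the corollary causes no issue: for $\epsilon > B$ the claim is trivial, since both $\cR$ and $\cR_n$ lie in $[-B,B]$ once $X$ is shifted.
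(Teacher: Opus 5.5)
Your proposal is correct and follows the same route as the paper. The paper's proof only records $g_\phi(x)=\log x-1+1/x$, hence $g_\phi^{-1}(y)=\Theta(e^{y})$, and invokes Corollary~\ref{corr_sample_improved_var} ``with simple verifications''; your bounds $L(\epsilon)\le e\cdot\exp(4\tau B/\epsilon)$ and $\sup_{\epsilon'\in[\epsilon,B]}M(\epsilon')\epsilon'\le BM(\epsilon)$ are precisely those verifications. On the logarithmic factor, your second reading is the paper's convention: its $\tilde{\cO}$ after Corollary~\ref{corr_sample_improved_var} absorbs all of $\log(768B^2\overline{L}^2(\epsilon)/(\epsilon^2\tau\delta))$, including $\log L(\epsilon)=\Theta(\tau B/\epsilon)$, whereas your first fix would change the constant $4$ in the stated exponent.
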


\begin{proof}
We have $\phi(x) = \rbr{x\log x - x +1}\mathbbm{1}_{[0, \infty)}(x)$
and subsequently $g_\phi(x) = \log x - 1 + 1/x$ for $x \geq 1$. 
This in turn implies $g_\phi^{-1}(y) =\Theta(\exp(y))$ for any $y \geq 0$. 
The desired claim then follows from Corollary \ref{corr_sample_improved_var} with simple verifications. 
\end{proof}

 By instantiating  \eqref{eq_sample_lb_suplinear_max} 
 with $g_\phi^{-1}(y) =\Theta(\exp(y))$, and comparing it with 
 Corollary \ref{corr_kl}, it can be seen that both the sample complexity upper and lower bounds for the Kullback-Leibler divergence exhibit an exponential dependence of $\Theta (\exp(B\tau/\epsilon))$ on the precision target  $\epsilon$.
It appears to us that Corollary \ref{corr_kl} provides the first sample complexity bound for estimating distributionally robust functional generated by the  Kullback-Leibler divergence, without any dependence on the nominal measure. 
%The obtained complexity bound also matches the lower bound and hence is optimal.
 It is perhaps worth mentioning that the obtained results do not require the sample space to be finite (cf., \cite{xu2023improved, shi2024distributionally,wang2024sample}).

\begin{corollary}[Jeffreys divergence]\label{corr_jeffreys}
Let $\cR(X) = \sup \cbr{\EE_Q \sbr{X}: \mathrm{KL}(Q \Vert P) + \mathrm{KL}(P \Vert Q) \leq \tau}$. 
For any $\delta \in (0,1)$, the number of samples $n$ to ensure $\abs{\cR(X) - \cR_n(X)} \leq \epsilon$ with probability $1-\delta$ can be bounded by 
\begin{align*}
\tilde{\cO} \rbr{
\frac{B^2 \exp(4\tau B/\epsilon)}{\epsilon^2}
}.
\end{align*}
\end{corollary}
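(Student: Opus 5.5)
The plan is to mirror the proof of Corollary \ref{corr_kl} and invoke Corollary \ref{corr_sample_improved_var}. The only inputs needed are the divergence function and its growth function. The Jeffreys divergence $\mathrm{KL}(Q\Vert P)+\mathrm{KL}(P\Vert Q)$ corresponds to
\[
\phi(x) = (x-1)\log x \,\mathbbm{1}_{(0,\infty)}(x),
\]
which is the sum of the KL generator $x\log x - x + 1$ and the Burg generator $-\log x + x - 1$. This $\phi$ is convex, lower semicontinuous, satisfies $\phi(1)=0$, and satisfies Assumption \ref{assump_nontrivial_phi}. Moreover $\phi(x)/x = (1-1/x)\log x \to \infty$, so $\phi$ has superlinear growth and the results of Section \ref{sec_suplinear} apply.

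By \eqref{g_phi_simplified}, for $x \geq 1$,
\[
g_\phi(x) = (1 - 1/x)\log x .
\]
This function is strictly increasing on $(1,\infty)$ by Remark \ref{remark_monotone_growth}. Since $\log x - 1 \leq g_\phi(x) \leq \log x$ for $x \ge 1$, inverting gives
\[
e^{y} \leq g_\phi^{-1}(y) \leq e^{y+1},
\]
that is, $g_\phi^{-1}(y) = \Theta(\exp(y))$, the same order as for KL. For the upper bound I will verify the two-sided inequality at $x = e^{y}$ and $x = e^{y+1}$ directly, using Proposition \ref{prop_inverse_g}, and not rely on asymptotics.

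I then substitute this into \eqref{eq_lip_m_constants}. This gives $L(\epsilon) \le e\exp(4B\tau/\epsilon)$, so $M(\epsilon) = O(B\exp(4B\tau/\epsilon))$. The logarithmic factor $\log(\overline L^2 B^2/(\epsilon^2\tau\delta))$ is polynomial in $B\tau/\epsilon$, $\log(1/\delta)$ and $\log(B/\epsilon)$, and is absorbed into $\tilde{\cO}$.

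The step needing care is the supremum $\sup_{\epsilon'\in[\epsilon,B]} M(\epsilon')\epsilon'$. Since $M(\epsilon')$ is non-increasing in $\epsilon'$, we can crudely bound it by $B\cdot M(\epsilon) = O(B^2 \exp(4B\tau/\epsilon))$. This is enough for the stated bound, since the target already carries the full $B^2/\epsilon^2$ factor.

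Putting these together, the first term of Corollary \ref{corr_sample_improved_var} is $\tilde{\cO}(B^2 e^{4\tau B/\epsilon}/\epsilon^2)$. The second term is $\tilde{\cO}(B e^{4\tau B/\epsilon}/\epsilon)$, which is dominated by the first because $\epsilon \le B$. This yields the claim. The only potential obstacle is the case $\epsilon > B$, where Corollary \ref{corr_sample_improved_var} does not apply. There, since $|\cR(X)|,|\cR_n(X)|\le B$ after the shift to $0\le X\le B$, any estimate is trivially $B$-accurate, so that case is immediate.
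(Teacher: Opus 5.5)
Your proposal is correct and follows the paper's proof: identify $\phi(x)=(x-1)\log x$, compute $g_\phi(x)=(1-1/x)\log x$, deduce $g_\phi^{-1}(y)=\Theta(e^{y})$, and invoke Corollary~\ref{corr_sample_improved_var}. The sandwich $e^{y}\le g_\phi^{-1}(y)\le e^{y+1}$, the crude bound $\sup_{\epsilon'\in[\epsilon,B]}M(\epsilon')\epsilon'\le B\,M(\epsilon)$, and the trivial case $\epsilon>B$ just spell out the ``simple verifications'' the paper leaves implicit; the absorbed logarithmic factor is of order $B\tau/\epsilon$, i.e.\ logarithmic in the main term, which is the same $\tilde{\cO}$ convention the paper uses.
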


\begin{proof}
We have $\phi(x) = \log x (x-1) \mathbbm{1}_{(0,\infty)}(x)$
and subsequently $g_\phi(x) = (1-1/x)\log x$ for $x \geq 1$. 
Similar to Corollary \ref{corr_kl}, this in turn implies $g_\phi^{-1}(y) = \Theta(\exp(y))$ for any $y \geq 0$. 
The desired claim then follows from Corollary \ref{corr_sample_improved_var} with simple verifications. 
\end{proof}

It is interesting to note that for the Jefferys divergence, the corresponding conjugate of the divergence function $\phi^*$ does not admit a closed-form expressions \cite{bayraksan2015data}.
Similar to the Kullback-Leibler divergence, the sample complexity obtained in Corollary \ref{corr_jeffreys} matches the lower bound presented in \eqref{eq_sample_lb_suplinear_max} in terms of their exponential dependence on the precision target $\epsilon$.
To the best of our knowledge, Corollary \ref{corr_jeffreys} above also presents  the first sample complexity bound for estimating $\cR(X)$ defined by the Jefferys divergence.

\begin{corollary}[Cressie-Read family \cite{duchi2021learning}]\label{corr_cressie}
For any $k \in \RR \setminus \cbr{0, 1}$,  let the divergence function be $\phi_k (x) = \frac{x^k - kx + k-1}{k(k-1)} \mathbbm{1}_{[0, \infty)}(x)$. 
Then the following holds.
\begin{itemize}
\item[(1)]  If $k > 1$, then for any $\delta \in (0,1)$, the number of samples $n$ to ensure $\abs{\cR(X) - \cR_n(X)} \leq \epsilon$ with probability $1-\delta$ can be bounded by 
\begin{align}\label{read_sup_growth_sample_ub}
\tilde{\cO} \rbr{
\rbr{\frac{B}{\epsilon}}^{ \max \cbr{2, {k}/\rbr{k-1}} }
}.
\end{align}
\item [(2)] If $k < 1$, then obtaining an $\epsilon$-accurate estimation of $\cR(X)$ can require an arbitrarily large number of samples.
\end{itemize}
\end{corollary}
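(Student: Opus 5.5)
The proof splits according to the two items, and each item reduces to a result already established.

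For item (2), with $k<1$, I will verify that $\phi_k$ grows sublinearly and satisfies Assumption \ref{assump_nontrivial_phi}; then Theorem \ref{info_lb_sublinear} applies directly. The assumption is immediate. For every $k\in\RR\setminus\cbr{0,1}$ the function $\phi_k$ is finite and convex on $(0,\infty)$, so $[1/2,3/2]\subseteq \mathrm{int}(\mathrm{dom}(\phi_k))$. For the growth, write
\begin{align*}
\frac{\phi_k(x)}{x} = \frac{x^{k-1} - k + (k-1)/x}{k(k-1)}.
\end{align*}
When $k<1$ we have $x^{k-1}\to 0$, so $\phi_k(x)/x \to \frac{-k}{k(k-1)} = \frac{1}{1-k} < \infty$. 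This is sublinear growth, and Theorem \ref{info_lb_sublinear} gives a lower bound of order $1/p$ that can be made arbitrarily large. For $k<0$ there is an edge case at $x=0$, where $\phi_k(0)=+\infty$. This does not matter: Lemma \ref{lemma_risk_lb} only uses densities taking values near $1$ and at large $x$.

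For item (1), with $k>1$, the plan is to invoke Corollary \ref{corr_sample_improved_var}, which requires the growth of $g_\phi^{-1}$. By \eqref{g_phi_simplified}, for $x\geq 1$,
\begin{align*}
g_\phi(x) = \frac{x^{k-1} - k + (k-1)/x}{k(k-1)}.
\end{align*}
This is $\Theta(x^{k-1})$ for large $x$, and it is strictly increasing and positive on $(1,\infty)$. Inverting gives
\begin{align*}
g_\phi^{-1}(y) \le \rbr{k(k-1) y + k}^{1/(k-1)} = \cO\rbr{ (1+y)^{1/(k-1)} }.
\end{align*}
Hence $L(\epsilon) = \cO\big((1+B\tau/\epsilon)^{1/(k-1)}\big)$. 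Treating $\tau$ and $k$ as constants, this gives $M(\epsilon)=\cO\big(B (B/\epsilon)^{1/(k-1)}\big)$ for $\epsilon\le B$. Moreover $\overline{L}(\epsilon)$ is polynomial in $B/\epsilon$, so the logarithmic factor is absorbed into $\tilde{\cO}$.

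The main step is bounding the supremum term $\sup_{\epsilon'\in[\epsilon,B]} M(\epsilon')\epsilon'$. From the above,
\begin{align*}
M(\epsilon')\epsilon' \lesssim B^{1+1/(k-1)} (\epsilon')^{1-1/(k-1)} = B^{k/(k-1)} (\epsilon')^{(k-2)/(k-1)}.
\end{align*}
I then split into cases.
\begin{itemize}
\item If $k\ge 2$, the exponent of $\epsilon'$ is nonnegative, so the supremum is attained at $\epsilon'=B$ and equals $\cO(B^2)$. The first term of the corollary is then $\cO(B^2/\epsilon^2)$.
\item If $1<k<2$, the exponent is negative, so the supremum is attained at $\epsilon'=\epsilon$. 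The first term is then $\cO\big(B^{k/(k-1)}\epsilon^{(k-2)/(k-1)}/\epsilon^2\big) = \cO\big((B/\epsilon)^{k/(k-1)}\big)$.
\end{itemize}
The second term is $M(\epsilon)/\epsilon = \cO\big((B/\epsilon)^{1+1/(k-1)}\big) = \cO\big((B/\epsilon)^{k/(k-1)}\big)$.

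Summing, and using $\epsilon\le B$ so that the larger exponent dominates, the total is $\tilde{\cO}\big((B/\epsilon)^{\max\{2,k/(k-1)\}}\big)$, which is \eqref{read_sup_growth_sample_ub}. I expect the only delicate points to be this case analysis on the sign of $(k-2)/(k-1)$ and confirming that the upper bound on $g_\phi^{-1}$ holds uniformly for $y\ge 0$, not only asymptotically. Both are routine.
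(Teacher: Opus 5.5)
Your proposal is correct and follows the paper's proof. Item (2) goes through Theorem~\ref{info_lb_sublinear} once you check $\lim_{x\to\infty}\phi_k(x)/x = 1/(1-k)<\infty$, and item (1) goes through Corollary~\ref{corr_sample_improved_var} with $g_{\phi_k}^{-1}(y)=\cO\bigl((1+y)^{1/(k-1)}\bigr)$. Your uniform bound on $g_{\phi_k}^{-1}$ and the case split on the sign of $(k-2)/(k-1)$ in $\sup_{\epsilon'\in[\epsilon,B]}M(\epsilon')\epsilon'$ are exactly the ``simple verifications'' the paper leaves implicit, and they also correct the paper's loose claim $g_{\phi_k}(y)=\Theta(y^{1/(k-1)})$, which is a typo for the inverse and fails as $y\to 0$.
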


\begin{proof}
From the definition of $\phi_k$, we have $g_{\phi_k}(x) = \frac{x^{k-1} - k + (k-1)/x}{k(k-1)}$, and hence 
$g_{\phi_k}(y) = \Theta (y^{1/(k-1)})$ for $y > 0$. 
The first part of the claim then follows from Corollary \ref{corr_sample_improved_var} with simple verifications. 
The second part of the claim follows  directly from Theorem \ref{info_lb_sublinear}.
\end{proof}

For $k > 1$, by instantiating the sample complexity lower bound \eqref{eq_sample_lb_suplinear_max} with $g_{\phi_k}(y) = \Theta (y^{1/(k-1)})$,
and comparing it with \eqref{read_sup_growth_sample_ub} in Corollary \ref{corr_cressie}, the sample average approximation estimator $\cR_n(X)$ obtains the optimal sample complexity bound of 
$\tilde{\cO} \rbr{
(\frac{B}{\epsilon})^{ \max \cbr{2, {k}/\rbr{k-1}} }
}$
for the Cressie-Read divergences.
In particular, when $k=2$, the corresponding divergence coincides with the Pearson $\chi^2$-divergence.
It should be noted that similar sample complexity upper and lower bounds have also been obtained in \cite{duchi2021learning}, which rely on a further simplification of the dual representation for the  Cressie-Read divergence functions, followed by a delicate analysis on the concentration of the empirical $L_p$-norm with $p > 1$.
In addition, for $k < 1$, the corresponding statistical intractability presented in Corollary \ref{corr_cressie} appears to be new in the literature for estimating $\cR(X)$.

%!TEX root = ./sample_dro.tex

%\newpage

\section{Concluding Remarks}\label{sec_appendix}

Before we conclude our discussions, we briefly discuss some potential problems of interest.
First, it could be  interesting to extend the sample complexity results to the case when decision variables are involved,  i.e., $\min_{\theta \in \Theta} \cR(X_\theta)$ with a compact $\Theta$.
Second, it appears that one can readily develop sample approximation-based methods \cite{nemirovski2009robust} for solving $\min_{\theta \in \Theta} \cR (X_\theta)$ via the truncated variant $\min_{\theta \in \Theta} \cR_L (X_\theta)$.
Third, it remains rewarding to study the asymptotic normality of the sample average estimator $\cR_n(X)$ and compare the result with the risk-neutral case. 
Lastly, one could further consider studying the sample average approximation for general coherent risk measures.

\bibliographystyle{plain}
\bibliography{references}

%\appendix 
%\input{appendix}

\end{document}